\documentclass[11pt]{article}

\usepackage{enumitem} 

\usepackage[T1]{fontenc}
\usepackage[utf8]{inputenc}
\usepackage[letterpaper,margin=1in]{geometry}
\usepackage{amsthm,amsmath,amssymb}
\usepackage[round,authoryear]{natbib}
\usepackage{microtype}
\usepackage{graphicx}
\usepackage{xcolor}
\usepackage{hyperref}
\newenvironment{keywords}{\par\small\noindent\textbf{Keywords: }}{\par\normalsize}
\usepackage{algorithm}
\usepackage{algorithmic}
\usepackage{booktabs}
\usepackage{multirow}

\usepackage{bm}

\usepackage{xcolor}

\definecolor{royalblue}{RGB}{65, 105, 225}
\definecolor{forestgreen}{RGB}{34, 139, 34}

\hypersetup{
    colorlinks=true,
    linkcolor=royalblue,   
    citecolor=forestgreen, 
    urlcolor=royalblue     
}

\usepackage{tikz}

\definecolor{mplblue}{HTML}{1F77B4}
\definecolor{mplorange}{HTML}{FF7F0E}
\definecolor{mplgreen}{HTML}{2CA02C}
\definecolor{mplred}{HTML}{D62728}
\definecolor{mplpurple}{HTML}{9467BD}

\newcommand{\legenditem}[2]{%
    \tikz[baseline=-0.5ex]{
        \draw[#1, line width=1pt] (0,0) -- (0.5,0);
    }%
    \hspace{0.3em}#2%
}
\usepackage{lastpage}

\usepackage{cleveref} 
\usepackage{thmtools}
\usepackage{thm-restate}

\usepackage{graphicx}
\usepackage{caption}
\usepackage{subcaption}

\newtheorem{theorem}{Theorem}
\newtheorem{lemma}{Lemma}

\newtheorem{remark}{Remark}
\newtheorem{definition}{Definition}
\newtheorem{proposition}{Proposition}

\newcommand{\Tr}{\mathrm{Tr}}
\newcommand{\vol}{\mathrm{dvol}}

\DeclareMathOperator*{\argmin}{arg\,min}

\begin{document}
\date{}

\title{Optimal Transport Bounds for Latent Density Smoothing on MTW($K>0$) Manifolds}

\author{Wonjun Lee \and Wenyan Luo}
\date{Department of Mathematics, The Ohio State University\\
\href{mailto:lee.8222@osu.edu}{\texttt{lee.8222@osu.edu}}\quad
\href{mailto:luo.1409@osu.edu}{\texttt{luo.1409@osu.edu}}}

\maketitle

\begin{abstract}%
Smoothing an empirical distribution can fill gaps between observations, but noise added in the surrounding space also moves probability mass away from the manifold supporting the data. We study this tradeoff through Wasserstein error bounds for intrinsic heat smoothing, ambient Gaussian smoothing, and smoothing in an encoder--decoder's latent space. Under the Ma--Trudinger--Wang condition with positive cross-curvature and the stated transport-regularity assumptions, we prove that sufficiently small intrinsic smoothing improves upon the unsmoothed empirical measure. Our ambient bound quantifies how increasing the surrounding dimension restricts the smoothing scale suggested by the bound. For latent smoothing, the analysis shows how the benefit depends on reconstruction accuracy and on how the encoder--decoder pair preserves movement along the manifold and responds to noise in other directions. A quadratic approximation to the bound yields bandwidth-selection rules and conditions favoring latent smoothing in this comparison. Controlled synthetic experiments examine the predicted bandwidth and geometric trends. An exploratory MNIST study illustrates latent smoothing relative to pixel-space perturbations.
\end{abstract}
\begin{keywords}
optimal transport, Wasserstein distance, manifold smoothing, Ma--Trudinger--Wang condition, latent variable models
\end{keywords}


\section{Introduction}
\label{sec:intro}
High-dimensional data in modern machine learning, such as high-resolution images, biological gene expression profiles, and 3D geometric meshes, typically concentrates near a low-dimensional manifold $M \subset \mathbb{R}^D$ with intrinsic dimension $m \ll D$. A foundational objective in statistical learning is to approximate the unknown target probability distribution $\mu$ supported on $M$ using a discrete empirical measure $\hat{\mu}_n = \frac{1}{n} \sum_{i=1}^n \delta_{x_i}$ formed by $n$ independent observed samples.

To evaluate estimation fidelity, we employ the 2-Wasserstein
distance $W_2$. Existing results characterize the convergence
of empirical measures in Wasserstein distance and explain how
the intrinsic dimensional structure of the target distribution
can govern sample complexity
\citep{boissard2014mean,weed2019sharp}.
The precise rates depend on the dimension, distributional
assumptions, and probabilistic formulation of the error bound.
Complementary to analyses of convergence as the sample size
increases, we study the effect of smoothing at a fixed sample
size $n$. Specifically, we analyze
$W_2(k_\sigma\ast\hat{\mu}_n,\mu)$ relative to the unsmoothed
manifold-supported target measure $\mu$, with explicit
dependence on the Gaussian smoothing bandwidth $\sigma$.

Defined in ambient space $\mathbb{R}^D$ as $k_\sigma(x) = (2\pi\sigma^2)^{-D/2} \exp(-\|x\|^2 / 2\sigma^2)$ for bandwidth $\sigma > 0$, convolving the discrete measure $\hat{\mu}_n$ with $k_\sigma$ redistributes empirical mass to yield the continuous probability density
\begin{equation}
    (k_\sigma \ast \hat{\mu}_n)(x) = \frac{1}{n} \sum_{i=1}^n k_\sigma(x - x_i).
\end{equation}
Smoothing spreads mass around each observation and can help fill gaps between samples. When the data lie on a manifold, however, the direction of that movement matters. Movement along the manifold can bring mass closer to unobserved parts of the target distribution, whereas movement away from it introduces distortion. Ambient Gaussian noise produces both effects. As the surrounding dimension increases while the manifold stays fixed, more of the injected noise acts in directions away from the manifold.

We make this tradeoff precise by first studying heat smoothing within the manifold. Under the stated Ma--Trudinger--Wang (MTW), cut-locus, and density assumptions, the regularity of optimal transport potentials allows us to quantify its effect on Wasserstein error \citep{ma2005regularity,figalli2009regularity}. We prove that sufficiently small positive intrinsic smoothing improves upon the raw empirical measure. We then compare ambient smoothing with this intrinsic reference, separating the contribution of noise away from the manifold from the remaining geometric effects.

The resulting ambient error bound suggests a smaller smoothing bandwidth as the surrounding dimension increases. More precisely, for fixed intrinsic geometry, target distribution, and sample, the bandwidth minimizing its quadratic surrogate scales inversely with ambient dimension; the improvement in that surrogate also shrinks. The detailed coefficients and the restrictions needed to apply this approximation are given in \Cref{sec:main_bounds}.

This motivates \emph{latent density smoothing}: encode the observations in a lower-dimensional space, add Gaussian noise there, and decode the perturbed points. Reducing the noise dimension can help, but the encoder--decoder geometry also matters. A useful representation must reconstruct the data accurately, preserve movement along the manifold, and control the decoder's response to noise in other latent directions. Our latent bound quantifies these effects and gives sufficient conditions for its optimized quadratic surrogate to be smaller than the ambient counterpart.

The comparison concerns quadratic surrogates of upper bounds, rather than the exact Wasserstein errors. Applying it to the complete bounds requires controlling the remainder terms. Moreover, the common reference is the intrinsic empirical error, which can exceed the ambient empirical error. The numerical experiments in \Cref{sec:experiments} examine whether the predicted trends also appear in the computed transport errors.

\subsection{Summary of Contributions}

Our main contributions are as follows:
\begin{itemize}
    \item We derive a small-bandwidth ambient Wasserstein error bound that separates smoothing within the manifold from noise away from it and additional geometric corrections (\Cref{thm:decomposition}).
    \item Under the stated geometric and transport-regularity assumptions, we prove that sufficiently small positive intrinsic heat smoothing improves the squared intrinsic Wasserstein error of the empirical measure (\Cref{thm:intrinsic_integrated}).
    \item We derive a latent smoothing bound that accounts for reconstruction accuracy and the local geometry of the encoder--decoder pair. We obtain bandwidth-selection rules from its quadratic surrogate and conditions under which the optimized latent surrogate is smaller than its ambient counterpart (\Cref{thm:latent_decomposition}).
    \item We test the predicted bandwidth and geometric trends on controlled synthetic examples, and compare latent perturbations with a pixel-space baseline in an exploratory MNIST study (\Cref{sec:experiments}).
\end{itemize}

\subsection{Related Work}
\label{sec:related_work}
Our work intersects statistical measure estimation on submanifolds, kernel regularization under optimal transport, score-based generative modeling, and geometric representation learning.

\paragraph{Statistical Estimation on Submanifolds.}
An extensive literature studies empirical and nonparametric
measure estimation under Wasserstein loss
\citep{dudley1969speed,fournier2015rate,weed2019sharp,
niles2022minimax}. When the target is supported on or near
a smooth low-dimensional manifold, convergence rates can depend
on the intrinsic geometry and density regularity rather than
directly on the ambient dimension. In particular,
\citet{divol2022measure} constructs kernel-based estimators for
manifold-supported measures and establishes intrinsic-dimensional
minimax rates. Our analysis addresses a different question:
for a fixed empirical measure, we quantify the local dependence
of intrinsic, ambient, and encoder--decoder smoothing errors on
the bandwidth and representation geometry.

\paragraph{Gaussian-Smoothed Optimal Transport.}
Building on classical non-parametric density estimation \citep{rosenblatt1956central, parzen1962estimation}, recent studies analyze the statistical convergence of smoothed measures under optimal transport \citep{goldfeld2020convergence, nietert2021smooth, goldfeld2024statistical}. For a fixed bandwidth $\sigma > 0$, Gaussian convolution achieves a parametric fluctuation rate $W_2(k_\sigma \ast \hat{\mu}_n, k_\sigma \ast \mu) = \mathcal{O}(n^{-1/2})$, though the approximation bias grows with the ambient dimension $D$ \citep{goldfeld2020convergence}. We extend this line of work by characterizing how smoothing bandwidth dictates the $W_2^2$ bias-variance trade-off specifically on low-dimensional submanifolds.

\paragraph{Score-Based Diffusion Models.}
Score-based diffusion models directly leverage ambient Gaussian smoothing by constructing a continuous family of convolved measures $k_\sigma \ast \hat{\mu}_n$ across noise scales $\sigma > 0$ \citep{song2020score, ho2020denoising}. In this paradigm, the forward process corresponds to progressive ambient kernel convolution, while the reverse process learns time-dependent score functions $\nabla_x \log (k_\sigma \ast \mu)(x)$ to invert the smoothing transformation as $\sigma \to 0$ \citep{hyvarinen2005estimation, vincent2011connection}. Theoretical studies demonstrate that these reverse dynamics can adapt to low-dimensional manifold structures $\mathcal{M} \subset \mathbb{R}^D$, as the learned vector fields act as restoring drift terms that contract off-manifold probability mass along the $D-m$ normal directions of $\mathcal{M}$ \citep{pidstrigach2022score, de2022convergence, oko2023diffusion}.

To mitigate the computational cost of high-dimensional ambient score matching, Latent Diffusion Models (LDMs) apply this noisy forward-reverse mechanism within a learned latent space \citep{rombach2022high}. While LDMs treat the latent space primarily as an empirical computational shortcut, our work rigorously characterizes the non-asymptotic $W_2$ transport error under latent smoothing, formalizing the geometric trade-offs between latent bandwidth selection and representation distortion.

\paragraph{Intrinsic vs. Extrinsic Manifold Smoothing.}
Smoothing probability measures on submanifolds can be formulated intrinsically via Riemannian heat kernels $k_\sigma^{M}$ \citep{berard1994embedding, coifman2006diffusion} or extrinsically via ambient kernel convolution in $\mathbb{R}^D$ \citep{niyogi2008finding}. Intrinsic heat kernels preserve underlying manifold geometry but require explicit access to the metric tensor, whereas extrinsic ambient convolution is computationally direct but diffuses mass off-manifold. We bridge these perspectives by using the geometric reach \citep{federer1959curvature} to bound the optimal transport discrepancy between extrinsic ambient convolution and intrinsic Riemannian smoothing.

\paragraph{Geometric Regularization in Deep Representations.}
Mapping high-dimensional data into low-dimensional coordinate spaces while controlling geometric distortion is a foundational objective in manifold learning \citep{tenenbaum2000global, belkin2003laplacian}. Modern deep representation models rely on architectural constraints, such as spectral normalization \citep{miyato2018spectral}, and denoising objectives \citep{vincent2008extracting, alain2014regularized}, to regulate representation geometry. Our work complements this literature by providing theoretical bounds that directly link Jacobian-based geometric properties of encoder--decoder mappings to optimal transport performance under latent smoothing.

\subsection{Organization of the Paper.} 

The remainder of this paper is organized as follows. Section~\ref{sec:setup} introduces the formal problem setup, optimal transport prerequisites, and geometric foundations under the $\mathrm{MTW}(K>0)$ condition. Section~\ref{sec:main_bounds} states our main theoretical results, \Cref{thm:decomposition,thm:intrinsic_integrated,thm:latent_decomposition}, and provides detailed interpretations of the error decompositions, optimal bandwidths, and estimation gains. Section~\ref{sec:proofs_overview} presents the complete proofs for all main theoretical bounds. Finally, Section~\ref{sec:experiments} validates our theoretical framework through numerical experiments, structured into three parts: empirical validation of ambient smoothing bounds (\Cref{thm:decomposition,thm:intrinsic_integrated}), evaluation of latent space density smoothing performance (\Cref{thm:latent_decomposition}), and application to real-world image distributions using MNIST.

\section{Problem Setup and Geometric Foundations}
\label{sec:setup}

We formalize the geometric, measure-theoretic, and architectural setup used throughout the paper.

\paragraph{Submanifold Geometry and Reach}
Let $M \subset \mathbb{R}^D$ be a smooth, compact, connected $m$-dimensional Riemannian manifold embedded in ambient euclidean space $\mathbb{R}^D$, where $m \ll D$. 

\begin{definition}[Reach]
The \emph{reach} of $M$, denoted by $\tau = \mathrm{reach}(M) > 0$, is the supremum over all $r > 0$ such that every point $y \in \mathbb{R}^D$ within an open $r$-neighborhood $U_r(M) = \{y \in \mathbb{R}^D : \mathrm{dist}(y, M) < r\}$ has a unique metric projection $\pi_M(y) \in M$.
\end{definition}

For any $x \in M$, the ambient space decomposes into the orthogonal sum $\mathbb{R}^D = T_x M \oplus N_x M$, where $T_x M \cong \mathbb{R}^m$ is the tangent space and $N_x M \cong \mathbb{R}^{D-m}$ is the normal space. The reach $\tau$ bounds the local curvature of $M$: for any $x, y \in M$, the normal displacement satisfies $\|\pi_{N_x M}(y - x)\| \le \frac{\|y - x\|^2}{2\tau}$.

\paragraph{Noise Model and Measures.}
Let $\mu$ be an unknown smooth, strictly positive probability density supported on $M$ satisfying
\[
    0<\rho_{\min}
    \leq
    \mu(x)
    \leq
    \rho_{\max}
    <\infty
    \qquad
    \text{for every }x\in M.
\]
In practice, we observe an empirical sample $X_n = \{x_1, \dots, x_n\}$, defining the clean empirical measure $\hat{\mu}_n = \frac{1}{n} \sum_{i=1}^n \delta_{x_i}$. To model realistic observational corruption, we assume the observed data is corrupted by isotropic ambient noise of scale $\sigma > 0$. This yields the extrinsically smoothed empirical measure
\begin{equation}
    \mu_{\mathrm{amb}} = k_\sigma \ast \hat{\mu}_n, \quad \text{where } k_\sigma(y) = (2\pi \sigma^2)^{-D/2} \exp\left(-\frac{\|y\|^2}{2\sigma^2}\right).
\end{equation}
Note that we use $k_\sigma$ to denote the isotropic Gaussian kernel operating in either the ambient space $\mathbb{R}^D$ or the latent space $\mathbb{R}^d$; despite this slight abuse of notation, the intended dimension will be clear from context throughout. Conversely, the ideal intrinsically smoothed measure on $M$ is defined via the Riemannian heat kernel $k_\sigma^M(x, y)$
\begin{equation}
    \mu_\sigma = k_\sigma^M \ast \hat{\mu}_n = \frac{1}{n} \sum_{i=1}^n k_\sigma^M(x_i, \cdot).
\end{equation}

\paragraph{Wasserstein distance convention.}
We write $W_2$ for the $2$-Wasserstein distance associated with the
ambient Euclidean cost
\[
    c_{\mathrm{amb}}(x,y)=\|x-y\|^2,
\]
whereas $W_{2,M}$ denotes the intrinsic $2$-Wasserstein distance on
$M$ associated with the squared geodesic cost
\[
    c_M(x,y)=d_M^2(x,y).
\]
Since
$
    \|x-y\|\leq d_M(x,y),
$
$x,y\in M,$
any two probability measures $\nu_1,\nu_2$ supported on $M$ satisfy
\begin{equation}
\label{eq:ambient_intrinsic_w2_comparison}
    W_2(\nu_1,\nu_2)
    \leq
    W_{2,M}(\nu_1,\nu_2).
\end{equation}
We use $W_{2,M}$ to analyze intrinsic heat smoothing on $M$ and
$W_2$ whenever at least one of the measures may be supported outside
$M$, as occurs under ambient or latent-space smoothing.

\section{Main Theoretical Bounds}
\label{sec:main_bounds}

To establish the statistical properties of smoothed empirical estimators when probability mass concentrates on lower-dimensional structures, consider an unknown target probability distribution $\mu$ supported on an $m$-dimensional compact Riemannian manifold $M \subset \mathbb{R}^D$ with reach $\tau > 0$. Given a finite empirical sample $\hat{\mu}_n = \frac{1}{n} \sum_{i=1}^n \delta_{x_i}$ drawn independently from $\mu$, raw discrete empirical estimators suffer from severe finite-sample variance. While injecting noise provides regularization by smoothing out discrete empirical fluctuations, it forces probability mass to interact with the global geometry of the embedding space.

The widely used approach to continuous density estimation is ambient Gaussian smoothing, wherein the empirical distribution is convolved directly with an isotropic Gaussian kernel $k_\sigma$ in the embedding space $\mathbb{R}^D$. However, because the ambient dimension is typically far larger than the manifold dimension ($D \gg m$), ambient isotropic noise disperses probability mass into normal directions orthogonal to the tangent bundle of $M$. By decomposing the overall Wasserstein error into an intrinsic manifold error and an off-manifold projection penalty, we establish the following non-asymptotic upper bound.

\begin{theorem}[Ambient Smoothing Error Bound]\label{thm:decomposition}
Let $M \subset \mathbb{R}^D$ be a compact $m$-dimensional submanifold with reach $\tau > 0$, and let $\mu$ be a smooth probability measure supported on $M$. Let $\mu_\sigma = k_\sigma^M \ast \hat{\mu}_n$ denote the intrinsic heat-smoothed empirical measure at diffusion time $t = \sigma^2$. For sufficiently small bandwidth $\sigma > 0$, the ambiently smoothed empirical measure $k_\sigma \ast \hat{\mu}_n$ satisfies
\begin{equation}\label{eq:ambient_bound_main}
    W_2^2(k_\sigma \ast \hat{\mu}_n, \, \mu) \le W_{2}^2(\mu_\sigma, \mu) + \sigma^2 (D-m) +  \frac{4 m W_2(\hat\mu_n, \mu)\sigma^2}{\tau} + \mathcal{O}\!\left(\frac{\sigma^3}{\tau}\right) + C_{\mathrm{tail}},
\end{equation}
where the constant implicit in $\mathcal O(\sigma^3/\tau)$ is independent of $\sigma$ but may depend on $m$, $D$, and the geometry of $M$. The nonnegative tail term $C_{\mathrm{tail}}=C_{\mathrm{tail}}(\tau,\sigma,m,D,M)$ is specified in \eqref{eq:tail_bound_proof}, under the restriction $0<\sigma<\tau/\sqrt{mD}$. For fixed $m$, $D$, and $M$, this term decays exponentially in $1/\sigma^2$ as $\sigma\to0$.
\end{theorem}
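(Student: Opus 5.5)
We construct an explicit (suboptimal) coupling between $k_\sigma\ast\hat\mu_n$ and $\mu$ by composing three maps, and bound its cost. Write each ambient noise vector at $x_i$ as $Z=Z_T+Z_N$ with $Z_T\in T_{x_i}M$ and $Z_N\in N_{x_i}M$. By isotropy, $Z_T\sim N(0,\sigma^2 I_m)$ and $Z_N\sim N(0,\sigma^2 I_{D-m})$ are independent.

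\emph{Step 1 (coupling construction).} We transport $y=x_i+Z_T+Z_N$ first to the tangential point $x_i+Z_T$. This costs $\|Z_N\|^2$, whose expectation is exactly $\sigma^2(D-m)$. Next we map $x_i+Z_T$ to the manifold point $\exp_{x_i}(Z_T)$; the discrepancy is $O(\|Z_T\|^2/\tau)$ by the reach bound $\|\pi_{N_xM}(y-x)\|\le\|y-x\|^2/(2\tau)$. The law of $\exp_{x_i}(Z_T)$, mixed over $i$, is a pushed-forward tangent Gaussian. It agrees with the heat kernel $k^M_\sigma(x_i,\cdot)$ at $t=\sigma^2$ up to curvature corrections of relative size $O(\sigma^2/\tau^2)$ (Minakshisundaram--Pleijel/Varadhan small-time expansion). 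These corrections can be absorbed into the $\mathcal O(\sigma^3/\tau)$ remainder after pairing with a transport displacement. Finally we couple $\mu_\sigma$ with $\mu$ optimally.

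\emph{Step 2 (expanding the cost).} Write $y-T(y)=A+B$, where $A=Z_N+(\text{tangent-to-manifold correction})$ is the smoothing displacement and $B$ is the displacement of the optimal plan from $\mu_\sigma$ to $\mu$. This gives $\mathbb E\|A+B\|^2=\mathbb E\|A\|^2+2\mathbb E\langle A,B\rangle+\mathbb E\|B\|^2$. Here $\mathbb E\|B\|^2\le W_2^2(\mu_\sigma,\mu)$ by the ambient/intrinsic comparison \eqref{eq:ambient_intrinsic_w2_comparison}. The main part of $\mathbb E\|A\|^2$ is $\sigma^2(D-m)$; the rest is $O(\mathbb E\|Z_T\|^3/\tau+\mathbb E\|Z_T\|^4/\tau^2)=O(\sigma^3/\tau)$ for small $\sigma$. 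The cross term is where the $W_2(\hat\mu_n,\mu)$ factor arises. The mean-zero component $Z_N$ conditioned on $x_i$ contributes nothing to leading order. This holds because $B$ depends on $y$ only through its manifold image, so it is independent of $Z_N$ given $x_i$. The curvature correction, however, has a nonzero mean of size about $m\sigma^2/\tau$ (the mean curvature vector times $\mathbb E\|Z_T\|^2=m\sigma^2$). By Cauchy--Schwarz its inner product with $B$ is at most $2\cdot(m\sigma^2/\tau)\cdot\|B\|_{L^2}$. We then replace $\|B\|_{L^2}$ by $W_2(\hat\mu_n,\mu)$ plus lower-order terms, using $W_2(\mu_\sigma,\hat\mu_n)=O(\sqrt m\,\sigma)$ and the triangle inequality. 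The extra pieces are again $O(\sigma^3/\tau)$. Allowing for the factor-two slack in the curvature constant gives the stated $4mW_2(\hat\mu_n,\mu)\sigma^2/\tau$.

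\emph{Step 3 (localization and tail).} Everything above needs $\|Z\|$ small relative to $\tau$, so that projections are unique and the expansions hold. We therefore restrict to the event $\{\|Z\|\le r\}$ with $r$ a fixed fraction of $\tau$; the condition $\sigma<\tau/\sqrt{mD}$ ensures $\mathbb E\|Z\|^2=D\sigma^2$ lies well inside this radius. On the complement we send mass by an arbitrary coupling, with cost at most $\mathbb E[(\|Z\|+\mathrm{diam}M)^2\mathbf 1_{\|Z\|>r}]$. A chi-square tail bound makes this exponentially small in $r^2/\sigma^2$, and we define this quantity as $C_{\mathrm{tail}}$.

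The main obstacle is Step 2's cross term. One must justify that the optimal displacement $B$ can be treated as a function of the manifold point only, which requires a measurable selection of the plan and a conditional-independence argument. One must also control the discrepancy between $\exp_{x_i}(Z_T)$ and the true heat kernel at the level of a coupling rather than a density. For the latter, I would first try coupling the two via a short Brownian-motion comparison (Euler scheme vs.\ Brownian motion on $M$), which gives an $O(\sigma^2)$ displacement and hence an $O(\sigma^3)$ contribution when paired with $A$ or $B$.
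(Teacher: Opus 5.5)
The gap is in the step from $\exp_{x_i}(Z_T)$ to the heat-kernel point. The rest of your architecture is the paper's: ambient point $\to$ flat tangent point $\to$ heat-kernel point $\to$ optimal plan, the normal noise killing its cross term, Cauchy--Schwarz on the geometric cross term, then $W_2(\mu_\sigma,\mu)\le W_2(\hat\mu_n,\mu)+O(\sqrt m\,\sigma)$.

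You claim that an $O(\sigma^2)$ coupling displacement $e$ between $\exp_{x_i}(Z_T)$ and the heat-kernel point $X$ contributes only $O(\sigma^3)$ ``when paired with $A$ or $B$''. This holds for $A$, whose $L^2$ norm is $O(\sigma)$, but not for $B$. $\|B\|_{L^2}$ is of order $W_2(\hat\mu_n,\mu)>0$, which does not vanish as $\sigma\to0$. So $2\mathbb E\langle e,B\rangle$ is of order $\sigma^2W_2(\hat\mu_n,\mu)$, exactly the order of the term whose coefficient the theorem fixes at $4m/\tau$.

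Your ``factor-two slack'' would then have to absorb this term, and with the constants available it does not. The tangent-to-exponential correction has $L^2$ norm at most $\frac{\sqrt{m(m+2)}}{2\tau}\sigma^2$. The synchronous Brownian comparison plus the triangle inequality gives $\|e\|_{L^2}\le\bigl(\frac{2m}{\tau}+\frac{\sqrt{m(m+2)}}{2\tau}\bigr)\sigma^2$. The two cross terms with $B$ are then bounded only by $\frac{4m+2\sqrt{m(m+2)}}{\tau}\sigma^2W_2(\mu_\sigma,\mu)$.

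The $O(\sigma^2)$ size of $e$ under a synchronous coupling is genuine: normal Brownian increments feed the tangential motion through the shape operator. The density route does not rescue the claim either. Turning a relative density error $O(\sigma^2/\tau^2)$ on a ball of radius $O(\sigma)$ into a coupling via total variation yields displacement only $O(\sigma^2/\tau)$. An $o(\sigma^2)$ coupling would need a linearized-transport estimate and uniform heat-kernel asymptotics, which you have not supplied.

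The repair, which is the paper's route, is not to pass through $\exp_{x_i}(Z_T)$ at all. Let $(\beta_s)_{s\ge0}$ be an ambient Brownian motion, set $Z_T=P(x_i)\beta_{\sigma^2}$, and draw $Z_N$ independently of the whole path. Let $X$ be the time-$\sigma^2$ endpoint of the extrinsic SDE $dX_s=P(X_s)\,d\beta_s+\frac12H(X_s)\,ds$, $X_0=x_i$, driven by the same path. It\^o's isometry, $\|P(y_1)-P(y_2)\|_F^2\le\frac{2m}{\tau^2}\|y_1-y_2\|^2$, $\|H\|\le m/\tau$, and the short-time bound $\mathbb E\|X_s-x_i\|^2\le 2ms$ give $\|X-(x_i+Z_T)\|_{L^2}\le\frac{2m}{\tau}\sigma^2$ (\Cref{lem:intrinsic_one_step}). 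Pairing this single discrepancy with $X-Y$ yields $\frac{4m}{\tau}\sigma^2W_2(\mu_\sigma,\mu)$, exactly the stated coefficient, while its pairing with $Z_N$ is $O(\sigma^3)$.

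Two smaller corrections. First, $\mathbb E\|B\|^2\le W_2^2(\mu_\sigma,\mu)$ does not follow from \eqref{eq:ambient_intrinsic_w2_comparison}, which points the other way; take $Y$ from an ambient-optimal plan so that equality holds. Second, after truncating to $\{\|Z\|\le r\}$, the $Z_N$ cross term vanishes because that event is invariant under $Z_N\mapsto -Z_N$, not by independence alone. The paper truncates at $r=\tau/\sqrt m$ to obtain its specific $C_{\mathrm{tail}}$.
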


Theorem~\ref{thm:decomposition} isolates the normal-noise contribution $\sigma^2(D-m)$ relative to intrinsic smoothing. It is the expected squared displacement in the $D-m$ normal directions. After applying \Cref{thm:intrinsic_integrated}, this contribution combines with $m\sigma^2$ to give $D\sigma^2$, in addition to the geometric correction. Thus codimension describes the normal component, whereas ambient dimension governs the explicit dimensional term in the combined bound.

To determine the statistical benefit of smoothing, one must analyze the intrinsic error term $W_{2,M}(\mu_t, \mu)$, which captures finite-sample variance reduction along the Riemannian heat flow $\partial_t \mu_t = \frac12 \Delta_M \mu_t$ on $M$ for $t = \sigma^2$. Quantifying the exact rate at which heat diffusion smoothes discrete empirical spikes requires controlling the temporal derivative of the intrinsic optimal transport cost. This derivative depends on the spatial regularity of optimal transport maps, which is guaranteed under the Ma--Trudinger--Wang (MTW) cross-curvature condition \citep{ma2005regularity, figalli2009regularity}.

\begin{definition}[The MTW($K$) Condition]
Let $(M,g)$ be a Riemannian manifold, and let $c(x,y) = \frac{1}{2}d_M^2(x,y)$ denote the squared intrinsic geodesic cost. For non-focal pairs $x,y \in M$, let $p = -\nabla_x c(x,y) \in T_x M$ be the initial velocity vector of the geodesic from $x$ to $y$. The Ma--Trudinger--Wang (MTW) tensor $\mathfrak{S}_{(x,y)}(\xi, \eta)$ measures the fourth-order cross-derivative of $c(x,y)$ under orthogonal perturbations $\xi, \eta \in T_x M$ with $\langle \xi, \eta \rangle_g = 0$. The manifold satisfies the $\mathrm{MTW}(K)$ condition for $K \in \mathbb{R}$ if
\begin{equation*}
    \mathfrak{S}_{(x,y)}(\xi, \eta) \ge K \|\xi\|_g^2 \|\eta\|_g^2.
\end{equation*}
\end{definition}

\begin{remark}[Geometric Intuition and Transport Regularity]
When $K > 0$, the manifold satisfies the strict MTW condition, which imposes a positive lower bound on the cross-curvature of the transport cost. Geometrically, this requires $M$ to exhibit positive curvature characteristics (such as round spheres or complex projective spaces) rather than negative, saddle-like curvature. Crucially, as established by \citet{figalli2009regularity}, strict MTW curvature combined with a non-focal cut-locus guarantees global $C^\infty$ smoothness of optimal transport maps and their dual Kantorovich potentials.
\end{remark}

Equipped with the global $C^\infty$ regularity of dual transport potentials, differentiating the squared intrinsic Wasserstein distance $W_{2,M}^2(\mu_\sigma, \mu)$ along the Riemannian heat flow trajectory $\partial_t \mu_t = \frac12 \Delta_M \mu_t$ for $t=\sigma^2$ yields an explicit non-asymptotic variance reduction bound.

\begin{theorem}[Intrinsic Variance Reduction Rate]\label{thm:intrinsic_integrated}
    Let $(M,g)$ be a compact, connected, $m$-dimensional Riemannian
manifold without boundary. Assume that $M$ has a
nonfocal cut locus and that the squared geodesic cost
    $c(x,y)=\frac12d_M^2(x,y)$
satisfies the $\mathrm{MTW}(K)$ condition for some $K>0$.
Let $\hat{\mu}_n=\frac1n\sum_{i=1}^n\delta_{x_i}$ be the
empirical measure of independent samples $x_i\sim\mu$, and let
$\mu_\sigma=k_\sigma^M\ast\hat{\mu}_n$ denote its intrinsic
heat smoothing at time $t=\sigma^2$, with heat generator
$\frac12\Delta_M$. 
For all sufficiently small $\sigma>0$,
the squared intrinsic Wasserstein distance satisfies
\begin{equation}\label{eq:integrated_w2_bound}
    W_{2,M}^2(\mu_\sigma, \mu) \le W_{2,M}^2(\hat{\mu}_n, \mu) -  C_{m} n^{-1/m} \sigma + m\sigma^2,
\end{equation}
where $C_m = \frac{m \rho_{\max}^{-1/m}}{\sqrt{2\pi}} \left(\frac{m}{m+1}\right)^{m/2}$. 
\end{theorem}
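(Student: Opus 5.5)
Let $t=\sigma^2$ and $F(t)=\frac12 W_{2,M}^2(\mu_t,\mu)$ along the heat flow $\partial_t\mu_t=\frac12\Delta_M\mu_t$ started at $\mu_0=\hat\mu_n$. The idea is to differentiate $F$, integrate in $t$, and convert the resulting first-order gain into the linear-in-$\sigma$ term $C_m n^{-1/m}\sigma$. For $t>0$ the measure $\mu_t$ has a smooth positive density. By the MTW$(K>0)$ condition, the nonfocal cut locus and the bounds $\rho_{\min}\le\mu\le\rho_{\max}$, the regularity theory of \citet{figalli2009regularity} gives a smooth Kantorovich potential $\varphi_t$ for the pair $(\mu_t,\mu)$. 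The optimal map is then $T_t=\exp(-\nabla\varphi_t)$, and $\varphi_t$ is $c$-convex with a uniform semiconcavity bound.

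\textbf{Step 1: derivative formula.} The first-variation formula for $W_{2,M}^2$ gives $F'(t)=\int\varphi_t\,\partial_t\mu_t=\frac12\int\Delta_M\varphi_t\,d\mu_t$ after integrating by parts on the closed manifold $M$. Next I bound $\Delta_M\varphi_t$ from above using $c$-convexity. At each point, $\nabla^2\varphi_t\ge-\nabla_x^2c(x,T_t(x))$. Hessian comparison then gives $-\nabla^2_x c\le I$ up to curvature corrections controlled by $d_M(x,T_t x)$. This yields $\Delta_M\varphi_t\le m+O(d_M(x,T_tx)^2)$ on the positive-curvature side. Integrating $F'\le \frac m2$ alone would only give the $m\sigma^2$ term, with $\frac12$ factors absorbed by the convention $c=\frac12 d^2$. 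The negative linear term has to come from the singular initial layer.

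\textbf{Step 2: the gain near $t=0$.} For small $t$, $\mu_t$ is a sum of $n$ near-Euclidean Gaussian bumps of width $\sqrt t$. I would compare $W_{2,M}^2(\mu_t,\mu)$ with $W_{2,M}^2(\hat\mu_n,\mu)$ using a coupling argument rather than the derivative. Take the optimal plan from $\hat\mu_n$ to $\mu$. Each atom $x_i$ sends mass $1/n$ to a region $A_i$ of $\mu$-mass $1/n$. Since $\mu\le\rho_{\max}$, the volume of $A_i$ is at least $1/(n\rho_{\max})$. Among all such regions, the one minimizing $\int_{A_i} d^2(x_i,y)\,d\mu$ is a geodesic ball of radius $r_n\gtrsim(n\rho_{\max})^{-1/m}$. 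Now spread $x_i$ into the heat bump and transport the bump into $A_i$. The cross term $-2\,\mathbb E\langle Z,\,y-x_i\rangle$, with $Z$ the Gaussian displacement, can be made negative by coupling $Z$ monotonically with the radial direction of $y$. Its size is $\sqrt t\,\mathbb E|Z_1|$ times a radial mean of order $r_n\frac{m}{m+1}$. This is the source of the factor $\frac{1}{\sqrt{2\pi}}$ (from $\mathbb E|Z_1|$ per coordinate, summed over $m$ coordinates) and of $\left(\frac{m}{m+1}\right)^{m/2}$ (from the ball moments and the volume normalization). The $\mathbb E|Z|^2=mt$ term gives $+m\sigma^2$. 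Curvature corrections are $O(\sigma^2 r_n)$ or higher and are absorbed for small $\sigma$.

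\textbf{Step 3: assemble.} Combine Step 2 on $[0,\sigma^2]$ with Step 1, which keeps the bound valid and makes the constants uniform thanks to the smoothness of $\varphi_t$. This yields \eqref{eq:integrated_w2_bound}. The main obstacle is Step 2. Turning the existence of a monotone-type coupling into a clean lower bound on the cross term with exactly the constant $C_m$ requires handling regions $A_i$ that are not balls. I would first attempt a rearrangement argument: the radial mean of $A_i$ about $x_i$ is at least that of a ball of the same volume. The exact constant may only come out up to the stated normalization.
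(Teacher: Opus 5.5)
Your proposal has a genuine gap, and it sits exactly where the $-C_mn^{-1/m}\sigma$ term should come from.

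In Step~1 you keep only the cost-Hessian bound, which gives nothing better than $\frac{d}{dt}W_{2,M}^2(\mu_t,\mu)\le m$, and you then look for the gain elsewhere. But the derivative formula already contains the gain, in the piece you discard. (As written, Step~1 also has a sign slip. $c$-convexity of $\varphi_t$ gives a \emph{lower} bound on $\nabla^2\varphi_t$. The upper bound comes from the $c$-concave potential $\psi_t$ on the $\mu_t$ side, via $\mathrm{Hess}\,\psi_t\le H_y:=\mathrm{Hess}_y\tfrac12 d_M^2(\cdot,T_ty)\le\mathbf I$ under nonnegative sectional curvature.) The paper writes $\Delta_M\psi_t=\Tr H_y-\Tr B_t$ with $B_t:=H_y-\mathrm{Hess}\,\psi_t\ge0$, and bounds $\Tr B_t$ from below. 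The factorization $P_v\,dT_t=Y_yB_t$ with $0<\det Y_y\le1$, together with the Monge--Amp\`ere equation, gives $\det B_t\ge\mu_t/\mu(T_t)\ge\mu_t/\rho_{\max}$. AM--GM then yields $\Delta_M\psi_t\le m-m(\mu_t/\rho_{\max})^{1/m}$, and with $F(t)=W_{2,M}^2(\mu_t,\mu)$,
$F'(t)\le m-m\rho_{\max}^{-1/m}\int_M\mu_t^{1+1/m}\,d\mathrm{vol}$.
Superadditivity of $s\mapsto s^{1+1/m}$ and the small-time heat-kernel expansion give $\int_M\mu_t^{1+1/m}\,d\mathrm{vol}\ge\frac{n^{-1/m}}{2\sqrt{2\pi}}\bigl(\frac{m}{m+1}\bigr)^{m/2}t^{-1/2}$. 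This is the $L^{1+1/m}$ norm of a Gaussian of variance $t$, and it is the source of the constant in $C_m$. Integrating $t^{-1/2}$ over $[0,\sigma^2]$ produces $-C_mn^{-1/m}\sigma$, so no separate initial-layer argument is needed.

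Your Step~2 replaces this with a static coupling. That idea is legitimate, but it is not carried out, and as set up it controls the wrong quantity. Conditionally on $x_i$, the gain is $2\sigma\,\mathbb E\langle Z,w\rangle$ with $w=\exp_{x_i}^{-1}y$ and $y\sim n\mu|_{A_i}$. Because $\mathbb EZ=0$, this equals $2\sigma\,\mathbb E\langle Z,w-\mathbb Ew\rangle$. It therefore depends on the spread of $A_i$ about its own barycenter, not on its radial mean about $x_i$; indeed $x_i$ need not even lie in its Laguerre cell $A_i$. Likewise, coupling $Z$ ``monotonically with the radial direction of $y$'' does not in general keep $Z\sim\mathcal N(0,\mathbf I_m)$.

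The lemma you actually need is an explicit lower bound on the maximal covariance $\sup_\pi\mathbb E_\pi\langle Z,W\rangle$ between $\mathcal N(0,\mathbf I_m)$ and an arbitrary law on $\mathbb R^m$ with density at most $n\rho_{\max}$. The law of $w$ has such a density, because the volume density in normal coordinates is at most $1$ under nonnegative curvature.

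The constant is also tight. The coefficient $m$ of $\sigma^2$ is used up exactly by $\sigma^2\,\mathbb E|Z|^2$. The comparison $d_M^2(\exp_xv,y)\le|v-\exp_x^{-1}y|^2$ follows from Toponogov, but the discrepancy between $k_t^M(x_i,\cdot)$ and $(\exp_{x_i})_\#\mathcal N(0,t\mathbf I_m)$ remains. It can be absorbed only if your linear constant \emph{strictly} exceeds $C_mn^{-1/m}$, so obtaining $C_m$ ``up to normalization'' would not prove the statement. Your bookkeeping does not produce that constant either: $\mathbb E|Z_1|=\sqrt{2/\pi}$, and ball moments give $\frac{m}{m+1}$ and $\omega_m^{-1/m}$ (with $\omega_m$ the volume of the unit ball), not $(\frac{m}{m+1})^{m/2}$.

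The route does have room. For $m=1$, the comonotone coupling gives a gain of at least $\sigma/(\sqrt\pi\,n\rho_{\max})$, which is twice $C_1n^{-1}\sigma$, and it would avoid transport regularity altogether. Without the covariance lemma for general $m$, however, the argument is incomplete. Step~3's combination with Step~1 adds nothing.
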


Theorem~\ref{thm:intrinsic_integrated} gives an explicit error reduction through the leading $-C_m n^{-1/m}\sigma$ term. Its geometric assumptions restrict its scope. In particular, the $\mathrm{MTW}(K)$ condition is known mainly for round spheres, complex projective spaces, and their small perturbations, excluding many Riemannian manifolds of interest.

Extending this intrinsic derivative framework to general Riemannian manifolds without the $\mathrm{MTW}$ condition remains a challenging open problem. A natural alternative is the partial regularity theory of \citet{de2015partial} (Theorem 1.4), which establishes that for $C^{k,\alpha}$ regular densities, the optimal transport map is a $C_{\mathrm{loc}}^{k+1,\alpha}$ diffeomorphism outside of closed singular sets $\Sigma_X, \Sigma_Y \subset M$ of measure zero. 

Applying partial regularity to the present argument requires additional justification. Our proof differentiates $W_{2,M}^2(\mu_{\sqrt{t}},\mu)$ with respect to diffusion time $t=\sigma^2$ and uses integration by parts for the transport potential. Smoothness outside a set of volume measure zero does not by itself justify this global integration-by-parts step: possible singular contributions to the distributional Laplacian must also be controlled. Moreover, partial regularity alone does not supply the pointwise Laplacian estimate used in our proof. Extending this argument beyond the stated geometric assumptions therefore requires further analysis.

To reduce the ambient-dimensional contribution to the combined bound, modern generative models perform smoothing within a lower-dimensional latent space $\mathbb{R}^d$, where $m \le d \ll D$. Mass is smoothed in latent space via an encoder $f: \mathbb{R}^D \to \mathbb{R}^d$ and mapped back to ambient space through a decoder $g: \mathbb{R}^d \to \mathbb{R}^D$.

To characterize the fidelity of the autoencoder pair $(f, g)$ near data points $x_i \in M$, we introduce three geometric parameters:
\begin{itemize}
    \item {Reconstruction Error ($\delta$):} The root-mean-square reconstruction error
    \begin{equation}\label{eq:def-delta}
        \delta = \left( \frac{1}{n} \sum_{i=1}^n \|x_i - g(f(x_i))\|^2 \right)^{1/2},
    \end{equation}
    quantifying how accurately the autoencoder preserves empirical data points on average.
    \item {Tangential Gain ($\rho_\parallel$) and Isometry Defect ($\varepsilon_{\mathrm{iso}}$):} Defined via
    \begin{equation}
        \varepsilon_{\mathrm{iso}} = \max_{1 \le i \le n} \min_{O \in O(T_{x_i}M)} \|A_i - \rho_\parallel\,\iota_i O\|,
    \end{equation}
    where $A_i := \left.J_g(f(x_i))J_f(x_i)\right|_{T_{x_i}M}: T_{x_i}M \to \mathbb{R}^D$, $O(T_{x_i}M)$ is the orthogonal group of the tangent space, and $\iota_i: T_{x_i}M \hookrightarrow \mathbb{R}^D$ is the inclusion map. The parameter $\rho_\parallel > 0$ represents the global tangential scaling factor along the manifold, while $\varepsilon_{\mathrm{iso}} \ge 0$ measures the worst-case local departure of the encoder-decoder composition from a conformal tangential isometry.
    \item {Orthogonal Expansion Factor ($L_{\mathrm{orth}}$):} Defined via $\|J_g(f(x_i)) v^\perp\| \le L_{\mathrm{orth}}\|v^\perp\|$ for normal vectors $v^\perp \in V_i^\perp$ orthogonal to the latent tangent subspace $V_i = J_f(x_i)(T_{x_i}M)$. This factor measures how much the decoder amplifies noise added in directions perpendicular to the manifold representation in latent space.
\end{itemize}

\begin{theorem}[Latent-Space Smoothing Error Bound]\label{thm:latent_decomposition}
Let $M \subset \mathbb{R}^D$ be a compact, connected $m$-dimensional submanifold with reach $\tau > 0$ and ambient diameter at most $R$. Assume the hypotheses of \Cref{thm:intrinsic_integrated}, or the round-sphere setting of \Cref{rmk:round-sphere}, so that the intrinsic smoothing bound applies. Let $\mu$ be a smooth probability density supported on $M$, and let $\hat{\mu}_n = \frac{1}{n}\sum_{i=1}^n \delta_{x_i}$ be an empirical measure. Let $f:\mathbb R^D\to\mathbb R^d$ and
$g:\mathbb R^d\to\mathbb R^D$, where $m\leq d\leq D$.
For every empirical point $x_i$, assume that
$U_i := \left.J_f(x_i)\right|_{T_{x_i}M} : T_{x_i}M\to V_i$
is a linear isometry onto an $m$-dimensional subspace
$V_i\subset\mathbb R^d$. Assume also that
$g\in C^2(\mathbb R^d;\mathbb R^D)$ has uniformly bounded first and
second derivatives. For
    $A_i := \left.J_g(f(x_i))J_f(x_i)\right|_{T_{x_i}M}$,
assume that there is one global tangential gain $\rho_\parallel>0$ and define
\[
    \varepsilon_{\mathrm{iso}}
    :=
    \max_{1\leq i\leq n}
    \min_{O\in O(T_{x_i}M)}
    \left\|A_i-\rho_\parallel\iota_iO\right\|_{\mathrm{op}},
\]
where $\iota_i:T_{x_i}M\hookrightarrow\mathbb R^D$ is the inclusion.
Assume also that
\[
    \|J_g(f(x_i))v^\perp\|
    \leq
    L_{\mathrm{orth}}\|v^\perp\|,
    \qquad v^\perp\in V_i^\perp.
\]
Set $W_0:=W_{2,M}(\hat\mu_n,\mu)$. {For sufficiently small $\sigma>0$ such that $\rho_\parallel\sigma$ lies in the small-bandwidth regime of \Cref{thm:intrinsic_integrated} and
    $0<\sigma
    <
    \frac{1}{2\sqrt m\,\rho_\parallel}
    \min\{W_0,2\tau\},$}
the generated pushforward measure
$\mu_{\mathrm{out}}
=g_\#\!\left((f_\#\hat{\mu}_n)\ast
\mathcal N(0,\sigma^2\mathbf I_d)\right)$ satisfies
\begin{equation}\label{eq:latent_bound_main}
    W_2^2(\mu_{\mathrm{out}}, \, \mu) \le \mathcal{E}_{\mathrm{const}} - \Gamma_{\mathrm{eff}} \sigma + L_{\mathrm{latent}} \sigma^2 + \mathcal{O}(\sigma^3),
\end{equation}
where
\begin{itemize}
    \item $\mathcal{E}_{\mathrm{const}} = (1+\varepsilon_{\mathrm{iso}})W_0^2 + 2\delta W_0 + \delta^2$ is the constant term in the derived upper bound.  The contribution
    $\varepsilon_{\mathrm{iso}}W_0^2$ is introduced by the Young inequality used to control the tangential-defect cross term and should not be interpreted as part of the exact zero-noise reconstruction error,
    \item $\Gamma_{\mathrm{eff}} = \rho_\parallel C_m n^{-1/m} \left(1 + \frac{\delta}{W_0} \right)$ is the effective tangential variance reduction rate, driven by global tangential gain $\rho_\parallel$ and augmented by the reconstruction ratio $\frac{\delta}{W_0}$, and $C_m > 0$ is the intrinsic rate constant,
    \item $L_{\mathrm{latent}} = L_{\mathrm{orth}}^2(d-m) + \varepsilon_{\mathrm{iso}}^2 m + Q$ is the second-order latent noise dispersion coefficient, combining off-manifold noise leakage across codimension $d-m$ with metric distortion and manifold geometric bounds $Q$.
\end{itemize}
\end{theorem}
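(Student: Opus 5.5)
We construct an explicit coupling between $\mu_{\mathrm{out}}$ and $\mu$ and bound its cost term by term, in the same spirit as the proof of \Cref{thm:decomposition}. Write a latent sample as $z=f(x_i)+\sigma\xi$ with $\xi\sim\mathcal N(0,\mathbf I_d)$. Split $\xi=\xi^\parallel+\xi^\perp$ with $\xi^\parallel\in V_i$ and $\xi^\perp\in V_i^\perp$. Taylor-expand the decoder to second order:
\[
g(f(x_i)+\sigma\xi)=g(f(x_i))+\sigma J_g(f(x_i))\xi^\parallel+\sigma J_g(f(x_i))\xi^\perp+R_i(\xi),
\qquad \|R_i(\xi)\|\le \tfrac12\|D^2g\|_\infty\sigma^2\|\xi\|^2 .
\]
Since $U_i$ is an isometry onto $V_i$, we may write $\xi^\parallel=U_i\eta$ with $\eta\sim\mathcal N(0,\mathbf I_m)$ on $T_{x_i}M$. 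Then $J_g\xi^\parallel=A_i\eta=\rho_\parallel\iota_iO_i\eta+E_i\eta$ with $\|E_i\|_{\mathrm{op}}\le\varepsilon_{\mathrm{iso}}$. Because $O_i\eta$ is again standard Gaussian on $T_{x_i}M$, the principal part $x_i+\rho_\parallel\sigma O_i\eta$ is a first-order model of intrinsic heat smoothing at bandwidth $\rho_\parallel\sigma$. Using the exponential map and the reach bound $\|\pi_{N}(y-x)\|\le\|y-x\|^2/2\tau$, we replace it by a point $y_i\sim k^M_{\rho_\parallel\sigma}(x_i,\cdot)$. The discrepancy is $O(\sigma^2/\tau)$ in $L^2$, and we absorb it into $Q\sigma^2+\mathcal O(\sigma^3)$.

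Couple $y_i$ with $\mu$ through an optimal plan for $W_{2,M}(\mu_{\rho_\parallel\sigma},\mu)$, obtaining $y\sim\mu$. The displacement then decomposes as
\[
\mathrm{out}-y=\underbrace{(g(f(x_i))-x_i)}_{a}+\underbrace{(y_i-y)}_{b}+\underbrace{\sigma E_i\eta}_{e}+\underbrace{\sigma J_g\xi^\perp}_{p}+\text{(second order)} .
\]
Expanding $\mathbb E\|a+b+e+p\|^2$:
\begin{itemize}
\item $\mathbb E\|a\|^2=\delta^2$.
\item $\mathbb E\|b\|^2\le W^2_{2,M}(\mu_{\rho_\parallel\sigma},\mu)$, using $\|\cdot\|\le d_M$.
\item $\mathbb E\|e\|^2\le\varepsilon_{\mathrm{iso}}^2m\sigma^2$.
\item $\mathbb E\|p\|^2\le L_{\mathrm{orth}}^2(d-m)\sigma^2$.
\end{itemize}
The cross terms are handled as follows. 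The term $\langle a,p\rangle$ vanishes because $a$ is deterministic given $i$ and $\xi^\perp$ is centered. The terms $\langle p,e\rangle$ and $\langle p,b\rangle$ have to order $\sigma^2$ only the contribution coming from the correlation of $b$ with $\xi^\perp$. We remove that contribution by choosing the coupling so that $\xi^\perp$ is independent of $(\eta,y)$. The term $\langle a,b\rangle$ is bounded by Cauchy--Schwarz by $2\delta\,W_{2,M}(\mu_{\rho_\parallel\sigma},\mu)$. For the tangential-defect cross term $2\langle b,e\rangle$, Young's inequality gives
\[
2\langle b,e\rangle\le\varepsilon_{\mathrm{iso}}\|b\|^2+\varepsilon_{\mathrm{iso}}^{-1}\|e\|^2 .
\]
This is where the factor $(1+\varepsilon_{\mathrm{iso}})W_0^2$ comes from. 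We will need to either rebalance the Young weights or absorb the resulting $\varepsilon_{\mathrm{iso}}m\sigma^2$ term into $Q$. The term $\langle a,e\rangle$ is zero in mean.

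Next insert \Cref{thm:intrinsic_integrated} at bandwidth $\rho_\parallel\sigma$:
\[
W^2_{2,M}(\mu_{\rho_\parallel\sigma},\mu)\le W_0^2-C_mn^{-1/m}\rho_\parallel\sigma+m\rho_\parallel^2\sigma^2 .
\]
For the term $2\delta W_{2,M}(\mu_{\rho_\parallel\sigma},\mu)$, use concavity of the square root:
\[
\sqrt{W_0^2-h}\le W_0-\frac{h}{2W_0}+\mathcal O(h^2),\qquad h=C_mn^{-1/m}\rho_\parallel\sigma-m\rho_\parallel^2\sigma^2 .
\]
The condition $\sigma<W_0/(2\sqrt m\rho_\parallel)$ guarantees that $W_0^2-h$ stays positive and comparable to $W_0^2$, so the expansion is valid. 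This produces
\[
2\delta W_0-\delta C_mn^{-1/m}\rho_\parallel\sigma/W_0 .
\]
Depending on how the constants are tracked, this is either exactly the augmentation $\rho_\parallel C_mn^{-1/m}\,\delta/W_0$ in $\Gamma_{\mathrm{eff}}$ or half of it. Collecting terms, the constant part is $\mathcal E_{\mathrm{const}}$ and the linear part is $-\Gamma_{\mathrm{eff}}\sigma$. Every quadratic contribution goes into $L_{\mathrm{latent}}$: the term $m\rho_\parallel^2$, the $\delta/W_0$ quadratic term, the reach correction $\sigma^2/\tau$ (which uses $\sigma<\tau/(\sqrt m\rho_\parallel)$), and the Young remainder. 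We collect the geometric pieces among these into $Q$. The remaining terms go into $\mathcal O(\sigma^3)$: the Taylor remainder, controlled by the bounded $D^2g$ and Gaussian moments, and its products with the first-order terms.

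The main obstacle is the replacement step, comparing the linearized tangential push $x_i+\rho_\parallel\sigma O_i\eta$ with genuine heat-kernel samples on $M$ at order $\sigma^2$ in $W_2$. First I would try the exponential-map pushforward of the tangent Gaussian. Its $W_2$ distance to $k^M_{\rho_\parallel\sigma}(x_i,\cdot)$ is $O(\sigma^2)$ by short-time heat kernel asymptotics, and the normal offset between $\exp_{x_i}$ and the tangent line is bounded using the reach. A secondary difficulty is keeping the signs of the cross terms honest. In particular, $\langle b,e\rangle$ must be handled by Young's inequality rather than averaging, because the optimal plan correlates $y$ with $\eta$.
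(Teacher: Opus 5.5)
Your plan follows the paper's proof essentially step for step. It has one bookkeeping error that has to be fixed.

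\textbf{Where you match the paper.} You use:
\begin{itemize}
\item the same split of the displacement into the reconstruction offset $a$, the intrinsic transport leg $b$, the tangent-to-heat-kernel replacement error, and the linearized decoder response $e+p$ plus a Taylor remainder;
\item the same coupling, with $\xi^\perp$ conditionally independent of $(\eta,y_i,y)$;
\item the same Young bound $2\mathbb E\langle b,e\rangle\le\varepsilon_{\mathrm{iso}}W_r^2+m\varepsilon_{\mathrm{iso}}\sigma^2$, with the second piece absorbed into $Q$;
\item the same insertion of \Cref{thm:intrinsic_integrated} at $r=\rho_\parallel\sigma$, followed by concavity of the square root.
\end{itemize}

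\textbf{The replacement step.} The step you call the main obstacle is handled in the paper by \Cref{lem:intrinsic_one_step} at bandwidth $r$, with its Brownian driver glued to the tangent Gaussian $O_i\eta$. This gives $z_i\sim k_r^M(x_i,\cdot)$ with $\|z_i-(x_i+rO_i\eta)\|_{L^2}\le(2m/\tau)r^2$. It is exactly where the hypothesis $\sigma<\tau/(\sqrt m\,\rho_\parallel)$ is used. Your exponential-map route is also viable, since $\|\exp_x(v)-x-v\|\le\|v\|^2/(2\tau)$ holds for all $v$. However, it needs a uniform quantitative $W_2$ comparison from the heat-kernel expansion, and its constants depend on curvature rather than being explicit.

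\textbf{The error: Taylor-remainder cross terms.} As written, this step fails. The remainder satisfies $\|R\|_{L^2}\le G_2\sigma^2$ with $G_2=\tfrac12\|D^2g\|_\infty\sqrt{d(d+2)}$. So only $\|R\|^2$ and its products with $e$, $p$, and the replacement error are $\mathcal O(\sigma^3)$. Its cross terms with the zeroth-order pieces $a$ and $b$ are of order $\sigma^2$. Cauchy--Schwarz gives $2|\mathbb E\langle a,R\rangle|\le2G_2\delta\sigma^2$ and $2|\mathbb E\langle b,R\rangle|\le2G_2W_r\sigma^2$. These do not vanish in general: $\mathbb E[R\mid X_n=x_i]=\tfrac{\sigma^2}{2}\sum_k\partial_k^2g(f(x_i))+o(\sigma^2)$ is generically nonzero, and $b$ is not conditionally centered. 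They must therefore go into $Q$, which is what the paper's term $2G_2(W_0+\delta)$ does. You already do the analogous thing for the replacement error's cross terms.

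\textbf{Minor points.}
\begin{itemize}
\item The tangent-line bound $\sqrt{W_0^2-h}\le W_0-h/(2W_0)$ holds exactly by concavity, because $W_0^2-h\ge W_r^2\ge0$. So there is no $\mathcal O(h^2)$ term, and you do not need $\sigma<W_0/(2\sqrt m\,\rho_\parallel)$ at that step.
\item That bound gives exactly the augmentation $\rho_\parallel C_mn^{-1/m}\delta/W_0$, not half of it.
\item The Young factor $(1+\varepsilon_{\mathrm{iso}})$ multiplies the whole intrinsic bound. It therefore contributes $(1+\varepsilon_{\mathrm{iso}})m\rho_\parallel^2$ to the quadratic coefficient, not just $m\rho_\parallel^2$. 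It also contributes an extra favorable linear term $-\varepsilon_{\mathrm{iso}}\rho_\parallel C_mn^{-1/m}\sigma$, which is discarded to reach $\Gamma_{\mathrm{eff}}$.
\end{itemize}
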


\Cref{thm:latent_decomposition_effective_gain} shows that latent smoothing
replaces the ambient-dimensional noise penalty with a second-order coefficient
governed by the latent geometry.  In particular, write
\[
    \beta_{\mathrm{lat}}
    :=
    L_{\mathrm{latent}}
    =
    L_{\mathrm{orth}}^2(d-m)
    +
    \varepsilon_{\mathrm{iso}}^2m
    +
    Q.
\]
Here $Q$ includes the intrinsic tangential contribution and reconstruction
and geometric corrections; $\beta_{\mathrm{lat}}$ is therefore not simply
$L_{\mathrm{orth}}^2(d-m)$.
Combining \Cref{thm:intrinsic_integrated,thm:decomposition,thm:latent_decomposition_effective_gain}
gives the small-bandwidth upper bounds
\begin{align}
    W_2^2(k_\sigma\ast\hat\mu_n,\mu)
    &\leq
    \Psi_{\mathrm{amb}}(\sigma)
    +
    \mathcal R_{\mathrm{amb}}(\sigma),
    \label{eq:ambient_surrogate_remainder}\\
    W_2^2(\mu_{\mathrm{out}}(\sigma),\mu)
    &\leq
    \Psi_{\mathrm{lat}}(\sigma)
    +
    \mathcal R_{\mathrm{lat}}(\sigma),
    \label{eq:latent_surrogate_remainder}
\end{align}
where
\begin{align}
    \Psi_{\mathrm{amb}}(\sigma)
    &:=
    W_0^2-\alpha_n\sigma+\beta_{\mathrm{amb}}\sigma^2,
    \label{eq:psi_amb}\\
    \Psi_{\mathrm{lat}}(\sigma)
    &:=
    \mathcal E_{\mathrm{const}}
    -\Gamma_{\mathrm{eff}}\sigma
    +\beta_{\mathrm{lat}}\sigma^2,
    \label{eq:psi_lat}
\end{align}
and
\[
    \mathcal R_{\mathrm{amb}}(\sigma)
    =
    \mathcal O(\sigma^3)
    +
    C_{\mathrm{tail}}(\tau,\sigma,m,D,M),
    \qquad
    \mathcal R_{\mathrm{lat}}(\sigma)
    =
    \mathcal O(\sigma^3).
\]
Here
\[
    \alpha_n:=C_mn^{-1/m},
    \qquad
    \Gamma_{\mathrm{eff}}
    :=
    \rho_\parallel\alpha_n
    \left(1+\frac{\delta}{W_0}\right),
\]
and we take
\[
    \beta_{\mathrm{amb}}
    :=
    D+\frac{4m}{\tau}W_2(\hat\mu_n,\mu).
\]
Indeed, the intrinsic estimate contributes $m\sigma^2$,
and the normal-noise term contributes $(D-m)\sigma^2$, whose sum is $D\sigma^2$,
and the geometric cross term contributes
$4mW_2(\hat\mu_n,\mu)\sigma^2/\tau$.
For fixed intrinsic geometry, sample, and target measure,
under an isometric inclusion into a larger ambient space,
$\beta_{\mathrm{amb}}$ therefore grows linearly with $D$.
The notation $\mu_{\mathrm{out}}(\sigma)$ emphasizes the
dependence of the generated measure on the latent bandwidth.

The functions
$\Psi_{\mathrm{amb}}$ and $\Psi_{\mathrm{lat}}$ are the
quadratic, second-order surrogates obtained by omitting the remainder terms
from \eqref{eq:ambient_surrogate_remainder} and
\eqref{eq:latent_surrogate_remainder}.  Assuming
$\beta_{\mathrm{amb}}>0$ and $\beta_{\mathrm{lat}}>0$, their unconstrained
minimizers over $\sigma\geq0$ are
\begin{align}
    \sigma_{\mathrm{amb}}^{\ast}
    &:=
    \argmin_{\sigma\geq0}
    \Psi_{\mathrm{amb}}(\sigma)
    =
    \frac{\alpha_n}{2\beta_{\mathrm{amb}}},
    \label{eq:ambient_surrogate_optimizer}\\
    \sigma_{\mathrm{lat}}^{\ast}
    &:=
    \argmin_{\sigma\geq0}
    \Psi_{\mathrm{lat}}(\sigma)
    =
    \frac{\Gamma_{\mathrm{eff}}}{2\beta_{\mathrm{lat}}}.
    \label{eq:latent_surrogate_optimizer}
\end{align}
These expressions are relevant when the resulting bandwidths lie inside the
small-bandwidth regimes of the corresponding theorems.  The minimum
surrogate values are
\begin{align}
    \min_{\sigma\geq0}
    \Psi_{\mathrm{amb}}(\sigma)
    &=
    W_0^2-\frac{\alpha_n^2}{4\beta_{\mathrm{amb}}},
    \label{eq:ambient_surrogate_minimum}\\
    \min_{\sigma\geq0}
    \Psi_{\mathrm{lat}}(\sigma)
    &=
    \mathcal E_{\mathrm{const}}
    -
    \frac{\Gamma_{\mathrm{eff}}^2}{4\beta_{\mathrm{lat}}}.
    \label{eq:latent_surrogate_minimum}
\end{align}
Accordingly, the rigorous bounds evaluated at these surrogate bandwidths
retain the remainder terms:
\begin{align}
    W_2^2
    \left(
        k_{\sigma_{\mathrm{amb}}^{*}}\ast\hat\mu_n,
        \mu
    \right)
    &\leq
    W_0^2
    -
    \frac{\alpha_n^2}{4\beta_{\mathrm{amb}}}
    +
    \mathcal R_{\mathrm{amb}}
    \left(
        \sigma_{\mathrm{amb}}^{*}
    \right),
    \label{eq:ambient_bound_at_surrogate_optimizer}\\
    W_2^2
    \left(
        \mu_{\mathrm{out}}
        \left(
            \sigma_{\mathrm{lat}}^{*}
        \right),
        \mu
    \right)
    &\leq
    \mathcal E_{\mathrm{const}}
    -
    \frac{\Gamma_{\mathrm{eff}}^2}{4\beta_{\mathrm{lat}}}
    +
    \mathcal R_{\mathrm{lat}}
    \left(
        \sigma_{\mathrm{lat}}^{*}
    \right).
    \label{eq:latent_bound_at_surrogate_optimizer}
\end{align}
Thus, $\sigma_{\mathrm{amb}}^{*}$ and
$\sigma_{\mathrm{lat}}^{*}$ are minimizers of the quadratic surrogates,
not necessarily minimizers of the exact Wasserstein errors.

Because $\Gamma_{\mathrm{eff}}>0$ whenever $\rho_\parallel>0$, the negative
linear term in $\Psi_{\mathrm{lat}}$ dominates its quadratic term for
sufficiently small positive $\sigma$.  More precisely,
\[
    \Psi_{\mathrm{lat}}(\sigma)
    <
    \Psi_{\mathrm{lat}}(0)
    =
    \mathcal E_{\mathrm{const}}
    \qquad
    \text{whenever }
    0<\sigma<
    \frac{\Gamma_{\mathrm{eff}}}{\beta_{\mathrm{lat}}}.
\]
The same conclusion holds for the complete asymptotic upper bound for all
sufficiently small $\sigma$, since
$\mathcal R_{\mathrm{lat}}(\sigma)=\mathcal O(\sigma^3)$.  This establishes
an initial decrease of the derived upper bound relative to its constant
term.  It does not, by itself, imply that the exact Wasserstein error is
strictly decreasing from its exact value at $\sigma=0$.

At the level of the second-order surrogates, latent smoothing gives the
smaller optimized value,
\[
    \min_{\sigma\geq0}\Psi_{\mathrm{lat}}(\sigma)
    <
    \min_{\sigma\geq0}\Psi_{\mathrm{amb}}(\sigma),
\]
provided that
\begin{equation}
\label{eq:general_latent_dominance}
    \mathcal E_{\mathrm{const}}-W_0^2
    <
    \frac{\Gamma_{\mathrm{eff}}^2}{4\beta_{\mathrm{lat}}}
    -
    \frac{\alpha_n^2}{4\beta_{\mathrm{amb}}}.
\end{equation}
This is a comparison between the optimized quadratic surrogates.  A strict
comparison between the complete upper bounds additionally requires the gap
in \eqref{eq:general_latent_dominance} to dominate the remainder terms in
\eqref{eq:ambient_bound_at_surrogate_optimizer} and
\eqref{eq:latent_bound_at_surrogate_optimizer}.

The excess constant in the latent surrogate is
\[
    \mathcal E_{\mathrm{const}}-W_0^2
    =
    \varepsilon_{\mathrm{iso}}W_0^2
    +
    2W_0\delta
    +
    \delta^2.
\]
In particular, $\varepsilon_{\mathrm{iso}}W_0^2$ is introduced by the
Young inequality used in the proof and is not the exact zero-noise
transport error of the autoencoder.

In the high-dimensional regime, where
$\beta_{\mathrm{amb}}\asymp D$ and $D\gg m$, the ambient surrogate
improvement
$\alpha_n^2/(4\beta_{\mathrm{amb}})$ becomes small.  Neglecting this term,
the surrogate comparison \eqref{eq:general_latent_dominance} reduces to
\begin{equation}
\label{eq:high_dim_dominance}
    \varepsilon_{\mathrm{iso}}W_0^2
    +
    2W_0\delta
    +
    \delta^2
    <
    \frac{
        \rho_\parallel^2C_m^2n^{-2/m}
        \left(1+\frac{\delta}{W_0}\right)^2
    }{
        4\left(
            L_{\mathrm{orth}}^2(d-m)
            +
            \varepsilon_{\mathrm{iso}}^2m
            +
            Q
        \right)
    }.
\end{equation}
Condition \eqref{eq:high_dim_dominance} is a limiting
surrogate condition obtained by dropping the ambient
improvement term. For finite $D$, it is necessary but
not sufficient for \eqref{eq:general_latent_dominance}.
More precisely, define the margin
\[
    \Delta
    :=
    \frac{\Gamma_{\mathrm{eff}}^2}{4\beta_{\mathrm{lat}}}
    -\bigl(\mathcal E_{\mathrm{const}}-W_0^2\bigr).
\]
The finite-dimensional surrogate comparison holds exactly
when
\[
    \Delta>\frac{\alpha_n^2}{4\beta_{\mathrm{amb}}}.
\]
Thus, a positive margin independent of $D$ guarantees
the surrogate comparison for sufficiently large $D$
when $\beta_{\mathrm{amb}}\asymp D$ and the remaining
quantities are fixed.

For example, in the exact-reconstruction case $\delta=0$,
condition \eqref{eq:high_dim_dominance} is equivalent to the
following implicit condition on the isometry defect, with
$Q$ evaluated at $\delta=0$:
\begin{equation}
\label{eq:eps_iso_surrogate_condition}
    \varepsilon_{\mathrm{iso}}
    <
    \frac{
        \rho_\parallel^2C_m^2n^{-2/m}
    }{
        4W_0^2
        \left(
            L_{\mathrm{orth}}^2(d-m)
            +
            \varepsilon_{\mathrm{iso}}^2m
            +
            Q
        \right)
    }.
\end{equation}
This condition is implicit because
$\varepsilon_{\mathrm{iso}}$ also appears in the second-order coefficient
and may enter $Q$.  If, in addition, the empirical error satisfies
\[
    W_0\asymp C_{\mathrm{emp}}n^{-1/m}
\]
in the dimension and sampling regime under consideration, then
$n^{-2/m}/W_0^2$ is of constant order. This cancels the explicit
sample-size dependence in that ratio. An admissible isometry
defect that remains bounded away from zero as $n$ increases
additionally requires control of the other coefficients.
For fixed $m$, $d$, $\mu$, and $\rho_\parallel>0$, with
$L_{\mathrm{orth}}$ and the decoder derivative bounds uniformly
bounded in $n$, the condition permits a sufficiently small
positive isometry defect independent of $n$.

Similarly, if the isometry contribution and the quadratic term
$\delta^2$ are controlled separately, and if the dependence of
$\beta_{\mathrm{lat}}$ on $\delta$ is negligible in the regime
$\delta\ll W_0$, isolating the interaction term $2W_0\delta$ gives the
scaling requirement
\begin{equation}
\label{eq:delta_surrogate_condition}
    \delta
    \lesssim
    \frac{
        \rho_\parallel^2C_m^2n^{-2/m}
    }{
        8W_0
        \left(
            L_{\mathrm{orth}}^2(d-m)
            +
            \varepsilon_{\mathrm{iso}}^2m
            +
            Q
        \right)
    }.
\end{equation}
Under the additional empirical scaling
$W_0\asymp C_{\mathrm{emp}}n^{-1/m}$, this becomes
\[
    \delta
    =
    \mathcal O\!\left(
        \frac{
            \rho_\parallel^2n^{-1/m}
        }{
            L_{\mathrm{orth}}^2(d-m)
            +
            \varepsilon_{\mathrm{iso}}^2m
            +
            Q
        }
    \right).
\]
Thus, the second-order surrogate analysis indicates that favorable latent
smoothing requires both controlled tangential distortion and reconstruction
error, with increasingly restrictive behavior when the latent-complement
penalty $L_{\mathrm{orth}}^2(d-m)$ is large.  These relations describe the
parameter dependence of the quadratic surrogate and should not be
interpreted as exact formulas for the optimizer of the full Wasserstein
error.

\section{Proof of Main Theorems}
\label{sec:proofs_overview}
This section proves the three main theoretical results. The proof of Theorem~1
separates the ambient smoothing error into an off-manifold projection term, an
intrinsic--extrinsic smoothing discrepancy, and the intrinsic estimation error.
The proof of Theorem~2 studies the evolution of the intrinsic Wasserstein error
along the heat flow. Finally, the proof of Theorem~3 quantifies the effect of
smoothing in a lower-dimensional latent space.

Throughout this section, $k_\sigma^M$ denotes the intrinsic heat kernel at
diffusion time $t=\sigma^2$, normalized so that its tangent-space covariance at
time $t$ is $t\mathbf I_m$. Equivalently, the heat generator is
$\frac12\Delta_M$. When the diffusion time itself is used as the
subscript, $k_t^M$ is shorthand for $k_{\sqrt t}^M$. Thus
$k_t^M\ast\hat\mu_n$ and $k_\sigma^M\ast\hat\mu_n$ refer to the same heat flow
under the relation $t=\sigma^2$.

\subsection{Proof of Theorem 1: Ambient Smoothing Error Bound}
\label{sec:ambient_smoothing_proof}

Rather than comparing $k_\sigma\ast\hat\mu_n$ directly with $\mu$, we isolate
three geometric contributions
\begin{enumerate}
    \item the displacement caused by projecting ambient Gaussian noise to
    tanget spaces of $M$;
    \item the discrepancy between extrinsically projected noise and intrinsic
    heat smoothing; and
    \item the intrinsic estimation error on $M$.
\end{enumerate}
The lemmas below control these terms in the order in which they enter the final
coupling argument.

To establish a baseline bound on the intrinsic error before introducing ambient noise leakage or latent distortion, we analyze the distance between the ambiently smoothed measure and its metric projection onto tangent space of each point.

\begin{lemma}[Extrinsic Projection Distance Bound]\label{lem:projW}
Assume $\tau > \sigma \sqrt{D}$.
Let $Y \sim k_\sigma \ast \hat{\mu}_n$ be the ambiently smoothed empirical distribution on $M \subset \mathbb{R}^D$ with reach $\tau > 0$, and let $\mu^{\mathrm{tan}}_\sigma = \frac{1}{n}\sum_i (\pi_{x_i+T_{x_i}M})_{\#} \mathcal{N}(x_i, \sigma^2 \mathbf{I}_D)$ denote the tangentially smoothed distribution. The squared 2-Wasserstein distance is bounded by
\begin{equation*}
    W_2^2\big(k_\sigma \ast \hat{\mu}_n, \, \mu^{\mathrm{tan}}_\sigma\big) \le \sigma^2(D-m).
\end{equation*}
\end{lemma}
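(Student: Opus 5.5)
We bound the distance by exhibiting an explicit coupling rather than solving a transport problem, since $W_2^2$ is at most the expected squared displacement under any coupling. Both measures are uniform mixtures over $i=1,\dots,n$ with the same weights $1/n$. The plan is to couple each ambient component $\mathcal N(x_i,\sigma^2\mathbf I_D)$ with its tangential counterpart $(\pi_{x_i+T_{x_i}M})_\#\mathcal N(x_i,\sigma^2\mathbf I_D)$, and then average over $i$. Convexity of $W_2^2$ under mixtures with common weights (gluing the componentwise couplings) gives
\[
W_2^2\Big(\tfrac1n\sum_i \nu_i,\ \tfrac1n\sum_i \nu_i'\Big)\le \tfrac1n\sum_i W_2^2(\nu_i,\nu_i').
\]

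For a fixed $i$, let $Y=x_i+Z$ with $Z\sim\mathcal N(0,\sigma^2\mathbf I_D)$, and pair $Y$ with its affine projection $\pi_{x_i+T_{x_i}M}(Y)=x_i+P_{T}Z$, where $P_T$ is the orthogonal projector onto $T_{x_i}M$. This deterministic map is a valid coupling of the $i$-th components. The displacement is $Y-\pi(Y)=P_{N}Z$, with $P_N=\mathbf I_D-P_T$ the projector onto $N_{x_i}M$, of rank $D-m$. Hence
\[
\mathbb E\|P_NZ\|^2=\sigma^2\,\mathrm{tr}(P_N)=\sigma^2(D-m),
\]
by rotational invariance of the isotropic Gaussian, equivalently a $\sigma^2\chi^2_{D-m}$ mean. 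Averaging over $i$ yields $W_2^2\le\sigma^2(D-m)$.

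There is essentially no analytic obstacle. The projection is onto an affine subspace, so it is globally defined and linear, and no reach argument is needed. The hypothesis $\tau>\sigma\sqrt D$ is therefore not used in this step; it only ensures that $\sigma\sqrt D$, the typical noise radius, lies within the reach, so that the later comparison of the tangential measure with the metric projection onto $M$ and with heat smoothing is meaningful. The only point needing care is justifying the mixture inequality: build the coupling $\gamma=\frac1n\sum_i\gamma_i$ with each $\gamma_i$ induced by the projection map, and check that its marginals are the two mixtures.
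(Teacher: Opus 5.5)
Your proposal is correct and matches the paper's proof: both couple each Gaussian component $\mathcal N(x_i,\sigma^2\mathbf I_D)$ with its affine tangent projection, so the displacement is the normal component of the noise, with second moment $\sigma^2(D-m)$. The paper expresses this coupling through the mixture label $X_n\sim\hat\mu_n$, while you glue componentwise couplings; as you observe, the hypothesis $\tau>\sigma\sqrt D$ is not used in this step.
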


\begin{proof}
Under the suboptimal coupling induced by the projection map $Y \mapsto \pi_{X_n+T_{X_n}M}(Y)$ with $X_n\sim \hat\mu_n$, we bound $W_2^2$ by the expected squared displacement in ambient space
\begin{equation*}
    W_2^2\big(k_\sigma \ast \hat{\mu}_n, \, \mu^{\mathrm{tan}}_\sigma\big) \le \mathbb{E}\left[ \|Y - \pi_{X_n+T_{X_n}M}(Y)\|^2 \right].
\end{equation*}
Expressing $Y = X_n + W$ and ambient Gaussian noise $W \sim \mathcal{N}(0, \sigma^2 \mathbf{I}_D)$, we decompose $W = W^\parallel + W^\perp$ into orthogonal projections onto the tangent space $T_{X_n}M \cong \mathbb{R}^m$ and normal space $N_{X_n}M \cong \mathbb{R}^{D-m}$. Then,
\begin{align*}
    \mathbb{E}\left[ \|Y - \pi_{X_n+T_{X_n}M}(Y)\|^2 \right] 
    &= \mathbb{E}\left[ \|W^\perp\|^2 \right] = \sigma^2(D-m). 
\end{align*}
This completes the proof.
\end{proof}

To complete the intrinsic error channel, we analyze the local behavior of the true intrinsic heat-smoothed distribution $\mu_\sigma = k_{\sigma}^M \ast \hat{\mu}_n$ and bridge the gap to its tangential Gaussian approximation $\mu^{\mathrm{tan}}_\sigma$. The following lemma constructs a synchronous coupling between Riemannian Brownian motion on $M$ and its flat tangent-space approximation, establishing explicit $L^2$ bounds on both the total diffusion displacement $\|X - Z_n\|_{L^2}$ from the initial sample $Z_n$ and the approximation discrepancy $\|X - \widetilde{X}\|_{L^2}$ as functions of the intrinsic dimension $m$, reach $\tau$, and scale parameter $\sigma$.

\begin{lemma}[Intrinsic Heat Flow vs.\ Tangent Gaussian Coupling]
\label{lem:intrinsic_one_step}
Let $M\subset\mathbb R^D$ be a compact $m$-dimensional smooth submanifold with reach $\tau>0$, and let
    $\hat\mu_n=\frac1n\sum_{i=1}^n\delta_{x_i}$
be an empirical measure on $M$. Let $X\sim\mu_\sigma=k_\sigma^M\ast\hat\mu_n$ denote the endpoint at diffusion time $t=\sigma^2$ of Riemannian Brownian motion initialized at $Z_n\sim\hat\mu_n$. Let $\widetilde X=Z_n+W^\parallel$ with
    $W^\parallel \sim \mathcal N\!\left(0,\sigma^2\mathbf I_{T_{Z_n}M} \right)$.
Under the synchronous coupling driven by ambient Brownian motion, if $0<\sigma\leq\frac{\tau}{\sqrt m}$, then
\begin{equation}
\label{eq:intrinsic_tangent_coupling}
    \|X- Z_n\|_{L^2}
    \leq
    \left(\sqrt{m} + \frac{2m\sigma}{\tau} \right) \sigma,\quad
    \|X-\widetilde X\|_{L^2}
    \leq
    \frac{2m}{\tau}\sigma^2.
\end{equation}
\end{lemma}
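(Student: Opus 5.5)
We realize Riemannian Brownian motion on $M$ extrinsically, as the Stratonovich solution of
\[
dX_s=P(X_s)\circ dB_s,\qquad X_0=Z_n,
\]
where $B$ is a standard ambient Brownian motion in $\mathbb R^D$ and $P(x)$ is the orthogonal projection onto $T_xM$. With the generator $\frac12\Delta_M$ this gives $X=X_t$, $t=\sigma^2$. The coupled tangent Gaussian is defined from the same $B$ by $\widetilde X=Z_n+P(Z_n)B_t$. Then $W^\parallel=P(Z_n)B_t\sim\mathcal N(0,\sigma^2\mathbf I_{T_{Z_n}M})$, as required. Converting to Itô form gives
\[
dX_s=P(X_s)\,dB_s+\tfrac12 H(X_s)\,ds,
\]
where $H$ is the mean curvature vector. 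Its norm satisfies $\|H\|\le m/\tau$, since every principal curvature is bounded by $1/\tau$ (the reach bound on the second fundamental form).

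For the displacement bound we write $X_t-Z_n=\int_0^t P(X_s)\,dB_s+\tfrac12\int_0^t H(X_s)\,ds$ and apply Minkowski in $L^2$. The martingale part has $L^2$ norm exactly $\sqrt{\mathbb E\int_0^t \mathrm{Tr}\,P(X_s)\,ds}=\sqrt{mt}=\sqrt m\,\sigma$ by the Itô isometry. The drift part is bounded deterministically by $\tfrac12 t\, m/\tau=m\sigma^2/(2\tau)$. This already gives $\|X-Z_n\|_{L^2}\le(\sqrt m+2m\sigma/\tau)\sigma$, with room to spare.

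For the discrepancy we write
\[
X_t-\widetilde X=\int_0^t\bigl(P(X_s)-P(Z_n)\bigr)\,dB_s+\tfrac12\int_0^t H(X_s)\,ds.
\]
The drift contributes at most $m\sigma^2/(2\tau)$, as before. For the stochastic term we use the Lipschitz bound $\|P(x)-P(y)\|_{\mathrm{HS}}\le C\sqrt m\,\|x-y\|/\tau$ for $x,y\in M$, which again follows from the bound $1/\tau$ on the second fundamental form (integrated along a geodesic, with $d_M$ compared to $\|\cdot\|$ on the scale $\|x-y\|\lesssim\tau$). The Itô isometry and the first estimate then give
\[
\mathbb E\Bigl\|\int_0^t(P(X_s)-P(Z_n))\,dB_s\Bigr\|^2\le\frac{Cm}{\tau^2}\int_0^t\mathbb E\|X_s-Z_n\|^2\,ds\le\frac{Cm}{\tau^2}\int_0^t\Bigl(\sqrt m+\frac{2m\sqrt s}{\tau}\Bigr)^2 s\,ds.
\]
The hypothesis $\sigma\le\tau/\sqrt m$ keeps the second summand bounded by a multiple of $\sqrt m$, so the right-hand side is $\lesssim m^2t^2/\tau^2$. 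Its square root is $\lesssim m\sigma^2/\tau$, and adding the drift term gives $\|X-\widetilde X\|_{L^2}\le 2m\sigma^2/\tau$ once constants are tracked.

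The main obstacle is obtaining the clean constant $2$, which requires a sharp Lipschitz constant for $x\mapsto P(x)$ in Hilbert--Schmidt norm. A crude bound of the form $\sqrt{2m}/\tau$ might spoil it. I would first try the identity $dP=\mathrm{II}+\mathrm{II}^{\ast}$, which gives $\|D_vP\|_{\mathrm{HS}}\le\sqrt2\,\|\mathrm{II}(v,\cdot)\|_{\mathrm{HS}}\le\sqrt{2m}\,\|v\|/\tau$. With it, the stochastic term integrates to $\sqrt{2m\cdot m\,t^2/2}/\tau=m\sigma^2/\tau$, up to the lower-order part coming from the $2m\sigma/\tau$ piece. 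Together with the drift $m\sigma^2/(2\tau)$ this totals $1.5\,m\sigma^2/\tau$ plus a lower-order correction, which should fit within $2m\sigma^2/\tau$ under $\sigma\le\tau/\sqrt m$.
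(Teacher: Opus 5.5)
Your route is the paper's. Both use the extrinsic It\^o form $dX_s=P(X_s)\,dB_s+\tfrac12H(X_s)\,ds$, split $X-\widetilde X$ into $\int_0^t(P(X_s)-P(Z_n))\,dB_s$ plus the drift, and control the stochastic part by It\^o's isometry with a Frobenius--Lipschitz bound on $P$, using $\|H\|\le m/\tau$ for the drift. Your first estimate is cleaner than the paper's. Bounding $\int_0^tP(X_s)\,dB_s$ directly gives $\|X-Z_n\|_{L^2}\le\sqrt m\,\sigma+m\sigma^2/(2\tau)$ for every $\sigma$. The paper instead writes $X-Z_n=P(Z_n)B_t+(X-\widetilde X)$ and so inherits the constant $2$ from the second estimate.

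The second estimate does not close as you wrote it. Suppose you feed $\mathbb E\|X_s-Z_n\|^2\le(\sqrt{ms}+2ms/\tau)^2$ into It\^o's isometry with Lipschitz constant $\sqrt{2m}/\tau$. Then
\[
\frac{2m}{\tau^2}\int_0^t\Bigl(\sqrt{ms}+\frac{2ms}{\tau}\Bigr)^2ds=\frac{m^2t^2}{\tau^2}\Bigl(1+\frac{16}{5}a+\frac83a^2\Bigr),\qquad a:=\frac{\sqrt{mt}}{\tau}\le1 .
\]
At $\sigma=\tau/\sqrt m$ the total is about $3.1\,m\sigma^2/\tau$. The ``lower-order'' $2m\sigma/\tau$ piece is of full order at the endpoint of the allowed range.

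The fix is to use your own sharper displacement bound. Since $\sqrt{ms}\le\tau$, you have $(\sqrt{ms}+ms/(2\tau))^2\le\frac94ms$. It\^o's isometry then gives $\frac32\,mt/\tau$, and adding the drift $\frac12\,mt/\tau$ yields exactly $2mt/\tau$. The paper gets its second-moment bound differently. It applies It\^o to $\|X_s-x\|^2$, uses the normal-component reach inequality to get $|\langle X_s-x,H(X_s)\rangle|\le\frac{m}{2\tau^2}\|X_s-x\|^2$, and applies Gronwall to obtain $\mathbb E\|X_s-x\|^2\le2ms$. That gives the constant $\sqrt2+\frac12$.

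Either closure needs $\|P(y_1)-P(y_2)\|_F\le\sqrt{2m}\,\|y_1-y_2\|/\tau$ in \emph{Euclidean} distance on all of $M$. Your $dP=\mathrm{II}+\mathrm{II}^{\ast}$ computation, integrated along a geodesic, gives this constant only with respect to $d_M$. Converting via $d_M\le2\tau\arcsin(\|y_1-y_2\|/(2\tau))$ for $\|y_1-y_2\|\le\tau$ costs a factor up to $\pi/3$; for $\|y_1-y_2\|\ge\tau$ you use the trivial bound $\sqrt{2m}$. With that factor the $\frac94$ shortcut gives about $2.07$. You would then have to keep the exact integral $\frac{mt^2}{2}(1+\frac45a+\frac16a^2)$, which lands at about $1.97$.

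It is cleaner to get the Euclidean bound directly from principal angles. Write $\|P(y_1)-P(y_2)\|_F^2=2\sum_k\sin^2\theta_k$ for the principal angles between $T_{y_1}M$ and $T_{y_2}M$. Then use the reach bound on tangent-space variation, $\sin\theta_{\max}\le\|y_1-y_2\|/\tau$. The paper uses this Lipschitz bound but states it without proof.
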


\begin{proof}
Condition on $Z_n=x\in M$. Let $P(y):\mathbb R^D\to T_yM$ denote the orthogonal projection onto the tangent space at $y$. The extrinsic representation of Riemannian Brownian motion with generator $\frac12\Delta_M$ is
\[
    X_t
    =
    x
    +
    \int_0^t P(X_s)\,dW_s
    +
    \frac12\int_0^t H(X_s)\,ds,
\]
where $W_t$ is standard Brownian motion in $\mathbb R^D$ and $H$ is the mean-curvature vector of the embedding. Adding and subtracting $P(x)W_t$ gives
\begin{equation}\label{eq:proof-Xt}
    X_t
    =
    x+P(x)W_t
    +
    \int_0^t\bigl(P(X_s)-P(x)\bigr)\,dW_s
    +
    \frac12\int_0^t H(X_s)\,ds.
\end{equation}

We first establish a short-time second-moment bound for $X_s-x$. Set
\[
    u(s):=\mathbb E\|X_s-x\|^2.
\]
Applying It\^o's formula to the function $q\mapsto\|q-x\|^2$ gives
\[
    u(s) = \int^s_0 \big[ m+ \mathbb E\bigl[ \langle X_r-x,H(X_r)\rangle \bigr]\big]dr, 
\]
and thus
\[
    u'(s)
    =
    m+ \mathbb E\bigl[ \langle X_s-x,H(X_s)\rangle \bigr].
\]
A reach of $\tau$ implies $\|H(y)\|\leq\frac{m}{\tau}$ for all $y\in M$. It also implies that for $a,b\in M$ and every unit normal $n\in N_aM$,
\[
    |\langle b-a,n\rangle|
    \leq
    \frac{\|b-a\|^2}{2\tau}.
\]
Applying this inequality at $a=X_s$ with $n=H(X_s)/\|H(X_s)\|$ when $H(X_s)\neq0$ yields
\[
    \bigl|
        \langle X_s-x,H(X_s)\rangle
    \bigr|
    \leq
    \frac{m}{2\tau^2}\|X_s-x\|^2.
\]
Consequently,
\[
    u'(s)
    \leq
    m+\frac{m}{2\tau^2}u(s),
    \qquad
    u(0)=0.
\]
Gronwall's inequality gives
\[
    u(s)
    \leq
    2\tau^2
    \left(
        \exp\left(\frac{ms}{2\tau^2}\right)-1
    \right).
\]
For $0\leq s\leq\tau^2/m$, we have $\exp\left(\frac{ms}{2\tau^2}\right)-1 \leq \frac{ms}{\tau^2}$, and therefore
\begin{equation}
\label{eq:brownian_short_time_second_moment}
    \mathbb E\|X_s-x\|^2
    \leq
    2ms.
\end{equation}

The reach bound also gives the tangent-projection estimate in Frobenius norm
\[
    \|P(y_1)-P(y_2)\|_F^2
    \leq
    \frac{2m}{\tau^2}\|y_1-y_2\|^2,
    \qquad y_1,y_2\in M.
\]
Hence, by It\^o's isometry and \eqref{eq:brownian_short_time_second_moment},
\begin{align}
    \left\|
        \int_0^t
        \bigl(P(X_s)-P(x)\bigr)\,dW_s
    \right\|_{L^2}^2
    &=
    \int_0^t
        \mathbb E
        \left[
            \|P(X_s)-P(x)\|_F^2
        \right]ds
    \nonumber\\
    &\leq
    \frac{2m}{\tau^2}
    \int_0^t
        \mathbb E\|X_s-x\|^2\,ds
    \nonumber\\
    &\leq
    \frac{2m}{\tau^2}
    \int_0^t 2ms\,ds
    =
    \frac{2m^2}{\tau^2}t^2 \label{eq:ito_iso}.
\end{align}
For the mean-curvature term, Minkowski's integral inequality gives
\begin{equation}\label{eq:H-bound}
    \left\|
        \frac12\int_0^t H(X_s)\,ds
    \right\|_{L^2}
    \leq
    \frac12\int_0^t
        \|H(X_s)\|_{L^2}\,ds
    \leq
    \frac{m}{2\tau}t.
\end{equation}
Combining \eqref{eq:ito_iso} and \eqref{eq:H-bound} in \eqref{eq:proof-Xt}, we have
\[
    \|X_t- x\|_{L^2}
    \leq
    \|P(x) W_t\|_{L^2} +
    \left(\sqrt2+\frac12\right)
    \frac{m}{\tau}t
    \leq
    \sqrt{mt} + \frac{2m}{\tau} t
\]
where the last inequality uses $P(x)W_t \sim \mathcal N(0,t\mathbf I_{T_xM})$ and $\sqrt{2}+\frac{1}{2} \le 2$.

Define $\widetilde X_t:=x+P(x)W_t.$ Then,
\begin{equation}
\label{eq:heat_tangent_error_process}
    X_t-\widetilde X_t
    =
    \int_0^t\bigl(P(X_s)-P(x)\bigr)\,dW_s
    +
    \frac12\int_0^t H(X_s)\,ds.
\end{equation}
Combining \eqref{eq:ito_iso} and \eqref{eq:H-bound} in \eqref{eq:heat_tangent_error_process}, we obtain
\[
    \|X_t-\widetilde X_t\|_{L^2}
    \leq
    \left(\sqrt2+\frac12\right)
    \frac{m}{\tau}t
    \leq
    \frac{2m}{\tau}t.
\]
The condition $\sigma\leq\tau/\sqrt m$ ensures that $t=\sigma^2\leq\tau^2/m$, so the preceding short-time estimates apply. Setting $t=\sigma^2$ and then averaging over $Z_n\sim\hat\mu_n$ concludes the proof.
\end{proof}

We are now ready to prove \Cref{thm:decomposition}.
For completeness, we first restate Theorem \ref{thm:decomposition} with explicit constants and non-asymptotic residual terms before giving the proof.
\begingroup
\renewcommand{\thetheorem}{\ref{thm:decomposition}}
\begin{theorem}[Extrinsic $W_2^2$ Error Decomposition under Ambient Smoothing]
Let $M \subset \mathbb{R}^D$ be an $m$-dimensional compact submanifold with reach $\tau > 0$. Let $\mu$ be a smooth probability measure supported on $M$, and let $k_\sigma \ast \hat{\mu}_n$ denote the ambient Gaussian-smoothed empirical measure with bandwidth parameter $\sigma > 0$ satisfying $0 < \sigma < \tau/\sqrt{mD}$. Let $\mu_\sigma = k_{\sigma}^M \ast \hat{\mu}_n$ denote the intrinsic heat-smoothed measure on $M$.

Then, the squared 2-Wasserstein distance satisfies
\begin{equation*}
\begin{aligned}
    W_2^2(k_\sigma\ast\hat\mu_n,\mu)
    &\leq W_2^2(\mu_\sigma,\mu)
    +\sigma^2(D-m)
    +\frac{4m}{\tau}W_2(\hat\mu_n,\mu)\sigma^2\\
    &\quad
    +\frac{4m}{\tau}
        \bigl(\sqrt{D-m}+\sqrt m\bigr)\sigma^3
    +\frac{12m^2}{\tau^2}\sigma^4
    +C_{\mathrm{tail}},
\end{aligned}
\end{equation*}
where $C_{\mathrm{tail}}$ is the explicit nonnegative
tail bound defined in \eqref{eq:tail_bound_proof}.
\end{theorem}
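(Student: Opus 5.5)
The plan is to build one explicit coupling of $k_\sigma\ast\hat\mu_n$ with $\mu$ and expand the squared displacement into the three channels listed at the start of the section.

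\textbf{Coupling.} Sample $Z_n\sim\hat\mu_n$ and drive everything with a single ambient Brownian motion $W_t$ on $[0,\sigma^2]$. This gives:
\begin{itemize}
\item $Y=Z_n+W_{\sigma^2}\sim k_\sigma\ast\hat\mu_n$;
\item the tangent point $\widetilde X=Z_n+P(Z_n)W_{\sigma^2}$, so that $Y-\widetilde X=W^\perp$;
\item the Riemannian Brownian endpoint $X\sim\mu_\sigma$ from \Cref{lem:intrinsic_one_step}.
\end{itemize}
Finally, glue an optimal $W_2$ coupling of $(X,V)$ with $V\sim\mu$, using the gluing lemma conditionally on $X$. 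Then
\[
W_2^2(k_\sigma\ast\hat\mu_n,\mu)\le \mathbb E\|W^\perp+(\widetilde X-X)+(X-V)\|^2 .
\]

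\textbf{Expanding the square.}
\begin{itemize}
\item The diagonal terms give $\sigma^2(D-m)$ (as in \Cref{lem:projW}), $\|X-\widetilde X\|_{L^2}^2\le 4m^2\sigma^4/\tau^2$, and $W_2^2(\mu_\sigma,\mu)$.
\item The cross term $2\mathbb E\langle W^\perp,\widetilde X-X\rangle$ is bounded by Cauchy--Schwarz: $2\sqrt{D-m}\,\sigma\cdot 2m\sigma^2/\tau$.
\item For the term $2\mathbb E\langle W^\perp+\widetilde X-X,\,X-V\rangle$, write $X-V=(Z_n-V)+(X-Z_n)$.
\begin{itemize}
\item The piece with $X-Z_n$ is bounded by Cauchy--Schwarz together with \Cref{lem:intrinsic_one_step}, giving $O(\sqrt m\,\sigma^3\cdot m/\tau)$ plus $\sigma^4 m^2/\tau^2$ contributions. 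These are bookkept into $\frac{4m}{\tau}\sqrt m\sigma^3$ and the remainder of $12m^2\sigma^4/\tau^2$.
\item The piece with $Z_n-V$ and $W^\perp$ needs care. Since $W^\perp$ is centered and independent of $Z_n$, it would vanish if $V$ depended only on $Z_n$. But $V$ is coupled with $X$, which depends on $W$.
\end{itemize}
\end{itemize}

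\textbf{Avoiding the dependence problem.} I would restructure so that the optimal plan is glued to $Z_n$ rather than to $X$: take $(Z_n,V)$ optimal for $W_2(\hat\mu_n,\mu)$ and independent of $W$. The price is that $W_2^2(\mu_\sigma,\mu)$ must then be recovered. I would use the triangle-type estimate $\|X-V\|\le$ a coupling-based bound, or, alternatively, keep the $X$-gluing and bound $\mathbb E\langle W^\perp,V-Z_n\rangle$ via the reach inequality. The reach inequality gives
\[
|\langle W^\perp,V-X\rangle|\le\|W\|\,\|V-X\|^2/(2\tau)
\]
for the normal component at $X$ (after correcting $N_{Z_n}$ versus $N_X$ by the projection Lipschitz bound). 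The cleanest bookkeeping that matches the stated coefficient $4mW_2(\hat\mu_n,\mu)\sigma^2/\tau$ is Cauchy--Schwarz of the $\widetilde X-X$ term against $Z_n-V$: this gives $2\cdot\frac{2m}{\tau}\sigma^2\cdot W_2(\hat\mu_n,\mu)$ when $(Z_n,V)$ is optimal, with the $W^\perp$ term killed by independence and centering.

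\textbf{Main obstacle and tails.} The main obstacle is exactly this consistency. One must use a single coupling in which $(Z_n,V)$ is $\hat\mu_n$-optimal (so the $W^\perp$ cross term vanishes) while the diagonal term is still $W_2^2(\mu_\sigma,\mu)$ rather than $\mathbb E\|X-V\|^2$. I would try to show $\mathbb E\|X-V\|^2$ is controlled by $W_2^2(\mu_\sigma,\mu)$ using the MTW regularity. Failing that, I would reinterpret the first term accordingly and absorb the difference into the $\sigma^3$ terms.

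Lastly, the restriction $\sigma<\tau/\sqrt{mD}$ ensures both $\sigma\le\tau/\sqrt m$ (needed for \Cref{lem:intrinsic_one_step}) and $\sigma\sqrt D<\tau$ (needed for \Cref{lem:projW}). On the event $\|W\|\ge\tau$, where reach-based estimates fail, I would bound the contribution by the Gaussian tail $\mathbb E[\|W\|^2\mathbf 1_{\|W\|\ge\tau}]$ plus the diameter-squared times $\mathbb P(\|W\|\ge\tau)$, using $\chi^2_D$ concentration. This defines $C_{\mathrm{tail}}$, which is exponentially small in $\tau^2/\sigma^2$.
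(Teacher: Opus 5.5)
There is a genuine gap, and it is the one you flag yourself but leave open. In your coupling, $W^\perp=(\mathbf I-P(Z_n))W_{\sigma^2}$ is built from the same Brownian path that drives $X$, through $\int_0^{t}(P(X_s)-P(Z_n))\,dW_s$. Once $V$ is glued to $X$, the cross term $2\mathbb E\langle W^\perp,X-V\rangle$ has no reason to vanish, and none of your fallbacks controls it at the order the statement requires:
\begin{itemize}
\item Cauchy--Schwarz gives $2\sqrt{D-m}\,\sigma\,W_2(\mu_\sigma,\mu)$, which is a first-order term.
\item The reach inequality at $X$ gives a term of size roughly $\sigma\sqrt D\,W_2^2(\mu_\sigma,\mu)/\tau$, again first order. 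The correction from $N_{Z_n}M$ to $N_XM$ also carries an extra $\sqrt D$ compared with the coefficient $\frac{4m}{\tau}W_2(\hat\mu_n,\mu)$.
\item Gluing the optimal plan to $Z_n$ instead replaces $W_2^2(\mu_\sigma,\mu)$ by something like $W_2^2(\hat\mu_n,\mu)+m\sigma^2$. The gap between these is generically first order in $\sigma$; that first-order improvement is exactly what \Cref{thm:intrinsic_integrated} quantifies. So it cannot be absorbed into $\sigma^3$ terms, and MTW regularity is not a hypothesis of this theorem anyway.
\end{itemize}
The same dependence breaks your bookkeeping of $2\mathbb E\langle W^\perp,X-Z_n\rangle$. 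Cauchy--Schwarz with \Cref{lem:intrinsic_one_step} gives $2\sqrt{m(D-m)}\,\sigma^2$, not an $O(\sigma^3)$ term.

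The missing idea is that the normal noise need not come from the Brownian path at all. Conditionally on $Z_n=x$, keep $W^\parallel=P(x)W_{\sigma^2}$ tied to the path $(W_s)$ that drives $X$. Then draw $W^\perp\sim\mathcal N(0,\sigma^2\mathbf I_{N_xM})$ independently of the whole path and of the gluing randomness. With this choice:
\begin{itemize}
\item $Z=x+W^\parallel+W^\perp$ still has law $\mathcal N(x,\sigma^2\mathbf I_D)$, and $\widetilde X=x+W^\parallel$ is still its projection onto $x+T_xM$.
\item $W^\perp$ is centered and independent of $(\widetilde X,X,V)$, so $\mathbb E\langle W^\perp,X-V\rangle=0$. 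The same holds for $\mathbb E\langle W^\perp,\widetilde X-X\rangle$, though the paper just bounds that term by Cauchy--Schwarz, which is where $\frac{4m}{\tau}\sqrt{D-m}\,\sigma^3$ comes from.
\end{itemize}
This is exactly the paper's construction.

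The remaining cross term should then be bounded by $2\|\widetilde X-X\|_{L^2}\,W_2(\mu_\sigma,\mu)$. Follow this with the triangle inequality for Wasserstein distances, $W_2(\mu_\sigma,\mu)\le W_2(\hat\mu_n,\mu)+\|X-Z_n\|_{L^2}$. Do not split $X-V$ pathwise through $Z_n$, because $(Z_n,V)$ is not an optimal pair in this coupling. \Cref{lem:intrinsic_one_step} then produces the $\frac{4m}{\tau}W_2(\hat\mu_n,\mu)\sigma^2$, $\frac{4m\sqrt m}{\tau}\sigma^3$ and $\frac{8m^2}{\tau^2}\sigma^4$ contributions.

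Your tail discussion is harmless but unnecessary. \Cref{lem:intrinsic_one_step} is an $L^2$ bound that needs only $\sigma\le\tau/\sqrt m$, so nothing fails when $\|W\|$ is large. Note also that the paper's $C_{\mathrm{tail}}$ uses the threshold $\tau/\sqrt m$ rather than $\tau$.
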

\addtocounter{theorem}{-1}
\endgroup

\begin{proof}
Let $Z_n\sim\hat\mu_n$.  Conditionally on $Z_n=x\in M$, let
$(B_s)_{s\geq0}$ be a standard Brownian motion in $\mathbb R^D$ and
set
$W^\parallel:=P(x)B_{\sigma^2} \sim \mathcal N\!\left(     0,\sigma^2\mathbf I_{T_xM} \right),$
where $P(x)$ denotes the orthogonal projection onto $T_xM$.
Independently of the entire Brownian path $(B_s)_{s\geq0}$, sample
    $W^\perp
    \sim
    \mathcal N\!\left(
        0,\sigma^2\mathbf I_{N_xM}
    \right).$
Define
    $\widetilde X:=x+W^\parallel$ and
    $Z:=x+W^\parallel+W^\perp.$
Because $T_xM$ and $N_xM$ are orthogonal complementary subspaces and
$W^\parallel$ and $W^\perp$ are independent, conditionally on
$Z_n=x$,
    $W^\parallel+W^\perp \sim \mathcal N(0,\sigma^2\mathbf I_D).$
Consequently,
    $Z\sim k_\sigma\ast\hat\mu_n.$
Moreover, $\widetilde X$ is the orthogonal projection of $Z$ onto the
affine tangent plane $x+T_xM$:
\[
    \widetilde X
    =
    \pi_{x+T_xM}(Z)
    =
    x+P(x)(Z-x).
\]
Thus $(Z,\widetilde X)$ is the ambient-to-tangent coupling used in
\Cref{lem:projW}.

Using the same Brownian path $(B_s)_{s\geq0}$ that defines
$W^\parallel=P(x)B_{\sigma^2}$, construct the Riemannian Brownian
motion $(X_s)_{s\geq0}$ initialized at $X_0=x$ as in
\Cref{lem:intrinsic_one_step}, and set $X:=X_{\sigma^2}.$
Then $X\sim\mu_\sigma$, and \Cref{lem:intrinsic_one_step} gives
\[
    \|X-\widetilde X\|_{L^2} \leq \frac{2m}{\tau} \sigma^2.
\]
By construction, conditionally on $Z_n=x$, the normal Gaussian vector
$W^\perp$ is independent of $(\widetilde X,X)$.

Finally, let $\pi^\star$ be an optimal coupling of $\mu_\sigma$ and
$\mu$ for the ambient Euclidean cost.  Disintegrate it as
\[
    \pi^\star(dq,dy)
    =
    \mu_\sigma(dq)\,K(q,dy).
\]
After constructing $X$, sample $Y$ conditionally according to
$K(X,dy)$, using randomness independent of $W^\perp$.  Then
$Y\sim\mu$ and
\[
    \mathbb E\|X-Y\|^2
    =
    W_2^2(\mu_\sigma,\mu).
\]
Furthermore, conditionally on $Z_n=x$, $W^\perp$ is independent of
$(\widetilde X,X,Y)$ and satisfies
\[
    \mathbb E[W^\perp\mid Z_n=x]=0.
\]

Next, we decompose the total expectation over the event
$\mathcal{E}=\{\|W\|<\tau/\sqrt{m}\}$ and its complement
\begin{equation}\label{eq:total_expectation_split}
    \mathbb{E}[\|Z - Y\|^2] = \mathbb{E}[\|Z - Y\|^2 \mathbf{1}_{\mathcal{E}}] + \mathbb{E}[\|Z - Y\|^2 \mathbf{1}_{\mathcal{E}^c}].
\end{equation}
Since $\mathcal E^c=\{\|W\|\ge\tau/\sqrt{m}\}$, the inequality
$\|Z-Y\|^2\le2\|W\|^2+2\operatorname{diam}(M)^2$ and
\Cref{lemma:gaussian_tail} give
\begin{align}\label{eq:tail_bound_proof}
    \mathbb{E}[\|Z - Y\|^2 \mathbf{1}_{\mathcal{E}^c}] &\le 2 \mathbb{E}\left[ \|W\|^2 \mathbf{1}(\|W\| \ge \tau/\sqrt{m}) \right] + 2 \operatorname{diam}(M)^2 \mathbb{P}\left( \|W\| \ge \tau/\sqrt{m} \right) \nonumber \\
    &\le 2\sqrt{3}\sigma^2 D \exp\left( -\frac{(\tau/\sqrt{m} - \sigma\sqrt{D})^2}{4\sigma^2} \right)\nonumber \\ 
    &\hspace{2cm}+ 2\operatorname{diam}(M)^2 \exp\left( -\frac{(\tau/\sqrt{m} - \sigma\sqrt{D})^2}{2\sigma^2} \right) \nonumber \\
    &= C_{\mathrm{tail}}(\tau,\sigma,m,D,M).
\end{align}
On $\mathcal E$, we expand the squared Euclidean distance into three direct
terms and three interaction terms
\begin{align}\label{eq:exact_vector_expansion}
    \|Z - Y\|^2 \mathbf{1}_{\mathcal{E}} &= \left( \|Z - \tilde{X}\|^2 + \|\tilde{X} - {X}\|^2 + \|{X} - Y\|^2 \right) \mathbf{1}_{\mathcal{E}} \nonumber \\
    &\quad + \underbrace{2\langle Z - \tilde{X}, \, \tilde{X} - {X} \rangle \mathbf{1}_{\mathcal{E}}}_{T_1 \mathbf{1}_{\mathcal{E}}} + \underbrace{2\langle \tilde{X} - {X}, \, {X} - Y \rangle \mathbf{1}_{\mathcal{E}}}_{T_2 \mathbf{1}_{\mathcal{E}}} + \underbrace{2\langle Z - \tilde{X}, \, {X} - Y \rangle \mathbf{1}_{\mathcal{E}}}_{T_3 \mathbf{1}_{\mathcal{E}}}.
\end{align}
Taking expectations over $\mathcal{E}$, the three squared distance terms are bounded as follows
\begin{align*}
    \mathbb{E}[\|Z - \tilde{X}\|^2 \mathbf{1}_{\mathcal{E}}] &\le \sigma^2(D-m)  \quad \text{(\Cref{lem:projW})}, \\
    \mathbb{E}[\|\tilde{X} - {X}\|^2 \mathbf{1}_{\mathcal{E}}]
    &\le \frac{4m^2}{\tau^2}\sigma^4
    \quad \text{(\Cref{lem:intrinsic_one_step})}, \\
    \mathbb{E}[\|{X} - Y\|^2 \mathbf{1}_{\mathcal{E}}]
    &\le W_{2}^2(\mu_\sigma, \mu).
\end{align*}
Using Cauchy--Schwarz and the $L^2$ bound from
\Cref{lem:intrinsic_one_step} gives
\begin{equation}\label{eq:T1_bound}
    \left|\mathbb E[T_1\mathbf 1_{\mathcal E}]\right|
    \leq
    2\|Z-\widetilde X\|_{L^2}
     \|\widetilde X-X\|_{L^2}
    \leq
    \frac{4m\sqrt{D-m}}{\tau}\sigma^3.
\end{equation}
For the second cross-term $T_2$, applying Cauchy--Schwarz  gives
\begin{equation}\label{eq:T2_bound}
    \left| \mathbb{E}[T_2 \mathbf{1}_{\mathcal{E}}] \right| \le 2 \sqrt{\mathbb{E}[\|\tilde{X} - {X}\|^2]} \sqrt{\mathbb{E}[\|{X} - Y\|^2]} \le \frac{4 m}{\tau} \sigma^2 W_{2}(\mu_\sigma, \mu).
\end{equation}
By the triangle inequality and \Cref{lem:intrinsic_one_step},
\[
    W_2(\mu_\sigma,\mu)
    \leq W_2(\hat\mu_n,\mu)
    +\sqrt m\,\sigma+\frac{2m}{\tau}\sigma^2.
\]
Substituting into \eqref{eq:T2_bound} yields
\[
    \left|\mathbb E[T_2\mathbf 1_{\mathcal E}]\right|
    \leq
    \frac{4m}{\tau}W_2(\hat\mu_n,\mu)\sigma^2
    +\frac{4m\sqrt m}{\tau}\sigma^3
    +\frac{8m^2}{\tau^2}\sigma^4.
\]
For the third cross-term, condition on
$(Z_n,W^\parallel,X,Y)$. The normal noise $W^\perp$ remains
a centered Gaussian under this conditioning. Writing
$W=W^\parallel+W^\perp$, orthogonality gives
$\mathcal E
=
\left\{
    \|W^\parallel\|^2+\|W^\perp\|^2<\tau^2/m
\right\}$.
Gaussian symmetry yields
\[
    \mathbb E\!\left[
        W^\perp\mathbf 1_{\mathcal E}
        \,\middle|\,Z_n,W^\parallel,X,Y
    \right]=0,
\]
and therefore
$\mathbb E[T_3\mathbf 1_{\mathcal E}]=0$.
Summing the bounds for the direct terms, cross terms,
and tail contribution completes the proof.
\end{proof}

The bound retains the realized empirical error
$W_2(\hat\mu_n,\mu)$ explicitly and does not require an
empirical convergence-rate estimate. A bound in expectation
or with high probability can be obtained by combining this
result with an empirical Wasserstein estimate valid under
the relevant dimensional and distributional assumptions.

\subsection{Proof of Theorem 2: Derivative of Intrinsic Wasserstein Distance under Heat Flow}
\label{sec:intrinsic_heat_derivative_proof}

To quantify the statistical benefit of intrinsic smoothing, we study
$W_{2,M}^2(\mu_t,\mu)$ along the Riemannian heat flow
$\partial_t\mu_t=\frac12\Delta_M\mu_t$. As $t\to0^+$, or equivalently
$\sigma\to0^+$ with $t=\sigma^2$, the heat-smoothed measure converges weakly to
$\hat\mu_n$.

The proof has three steps. We first differentiate the optimal transport cost in
the time variable $t$. We then bound the curvature and Jacobian terms using the
transport expansion and the Monge--Amp\`ere equation. Finally, we integrate in
time and substitute $t=\sigma^2$. The differentiation step uses the regularity
of the optimal maps and Kantorovich potentials supplied by the stated nonfocal
and Ma--Trudinger--Wang hypotheses
\citep{ma2005regularity,figalli2009regularity}.

For completeness, we restate Theorem~\ref{thm:intrinsic_integrated} with explicit geometric constants prior to presenting its formal proof.

\begingroup
\renewcommand{\thetheorem}{\ref{thm:intrinsic_integrated}}
\renewcommand{\theHtheorem}{intrinsic-restatement}
\begin{theorem}[Intrinsic Wasserstein Distance Derivative and Integrated Bounds]
Let $(M,g)$ be a compact, connected, $m$-dimensional Riemannian
manifold without boundary. Assume that $M$ has a
nonfocal cut locus and that the squared geodesic cost
    $c(x,y)=\frac12d_M^2(x,y)$
satisfies the $\mathrm{MTW}(K)$ condition for some $K>0$.
Let
    $\mu_t
    =
    k_t^M\ast\hat\mu_n$
denote its intrinsic heat-kernel smoothing at diffusion time $t>0$,
where
    $\partial_t\mu_t
    =
    \frac12\Delta_M\mu_t.$
For the bandwidth parameter $\sigma>0$, write
    $\mu_t:=\mu_{\sqrt{t}}$ for simplicity of notation.

Assume that, for every sufficiently small $t>0$, the optimal transport
from $\mu_t$ to $\mu$ satisfies the regularity conclusions required in
\Cref{prop:lap_dual_potential}. Then
\begin{equation}
\label{eq:differential_w2_bound_explicit}
    \frac{d}{dt}
    W_{2,M}^2(\mu_t,\mu)
    \leq
    -\frac12 C_m n^{-1/m}t^{-1/2}
    +m,
\end{equation}
where
\begin{equation}
\label{eq:def_intrinsic_Cm}
    C_m
    :=
    \frac{
        m\rho_{\max}^{-1/m}
    }{
        \sqrt{2\pi}
    }
    \left(
        \frac{m}{m+1}
    \right)^{m/2}.
\end{equation}
In particular,
\[
    \lim_{t\to0^+}
    \frac{d}{dt}
    W_{2,M}^2(\mu_t,\mu)
    =
    -\infty.
\]
Integrating \eqref{eq:differential_w2_bound_explicit} and setting $t=\sigma^2$, we obtain, for all sufficiently small $\sigma>0$,
\begin{equation}
\label{eq:integrated_w2_bound_explicit}
    W_{2,M}^2(\mu_\sigma,\mu)
    \leq
    W_{2,M}^2(\hat\mu_n,\mu)
    -
    C_m n^{-1/m}\sigma
    +
    m\sigma^2.
\end{equation}
\end{theorem}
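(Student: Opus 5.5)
The argument has three stages. First differentiate the transport cost along the heat flow, then bound the resulting integral from above, then integrate in $t$.

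\textbf{Step 1: the time derivative.} Let $(\varphi_t,\psi_t)$ be a Kantorovich pair for the cost $c=\frac12 d_M^2$ between $\mu_t$ and $\mu$. Then $\frac12 W_{2,M}^2(\mu_t,\mu)=\int\varphi_t\,d\mu_t+\int\psi_t\,d\mu$. By the envelope (Danskin) argument, the derivative only sees the variation of the measure $\mu_t$, with the potentials held fixed. Combined with $\partial_t\mu_t=\frac12\Delta_M\mu_t$ and integration by parts, this gives
\[
\frac{d}{dt}W_{2,M}^2(\mu_t,\mu)=\int_M \Delta_M\varphi_t\,d\mu_t .
\]
To justify the integration by parts and the differentiability in $t$, I would use the $C^\infty$ regularity of $\varphi_t$ from \citet{figalli2009regularity}. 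That regularity needs the nonfocal cut locus, MTW$(K>0)$, and the smooth positive density of $\mu_t$ for $t>0$. These are exactly the regularity conclusions assumed in \Cref{prop:lap_dual_potential}.

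\textbf{Step 2: bounding $\int\Delta_M\varphi_t\,d\mu_t$.} The optimal map is $T_t(x)=\exp_x(-\nabla\varphi_t)$, and the Monge--Amp\`ere equation reads
\[
\mu_t(x)=\mu(T_t x)\,\det DT_t(x).
\]
Expanding $DT_t$ in normal coordinates, one has
\[
\Delta_M\varphi_t = m-\operatorname{tr}DT_t + (\text{curvature terms}).
\]
Under MTW$(K>0)$ and positive curvature, the curvature terms carry a favorable sign; I would verify this from the Jacobi-field comparison along geodesics. This yields $\Delta_M\varphi_t\le m-\operatorname{tr}DT_t$. The AM--GM inequality then gives
\[
\operatorname{tr}DT_t\ge m(\det DT_t)^{1/m}=m\left(\frac{\mu_t(x)}{\mu(T_tx)}\right)^{1/m}\ge m\,\rho_{\max}^{-1/m}\mu_t(x)^{1/m},
\]
and hence
\[
\frac{d}{dt}W_{2,M}^2\le m-m\rho_{\max}^{-1/m}\int_M\mu_t^{1+1/m}\,d\mathrm{vol}.
\]

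\textbf{Step 3: lower-bounding the density integral and integrating.} It remains to show
\[
\int\mu_t^{1+1/m}\,d\mathrm{vol}\gtrsim n^{-1/m}t^{-1/2}.
\]
For small $t$ the heat kernel is close to a Gaussian of covariance $t\mathbf I_m$, and I would use this to estimate the integral. By convexity of $s\mapsto s^{1+1/m}$, Jensen's inequality bounds it below by $n^{-1/m}$ times the corresponding integral of a single kernel, $\frac1n\sum_i\int (k_t^M(x_i,\cdot))^{1+1/m}$. For the Gaussian this single-kernel integral is explicit:
\[
(2\pi t)^{-1/2}\left(\frac{m}{m+1}\right)^{m/2}.
\]
This reproduces $C_m$, up to the factor $\frac12$ that the display \eqref{eq:differential_w2_bound_explicit} absorbs. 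That factor also covers the error of the heat-kernel approximation for sufficiently small $t$. Integrating the derivative bound from $0$ to $t$ uses $\int_0^t\frac12 s^{-1/2}\,ds=t^{1/2}$, together with continuity $W_{2,M}(\mu_t,\mu)\to W_{2,M}(\hat\mu_n,\mu)$ as $t\to0^+$. Setting $t=\sigma^2$ then gives \eqref{eq:integrated_w2_bound_explicit}, and the $-\infty$ limit of the derivative is immediate.

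\textbf{Main obstacle.} I expect two difficulties. The first is the pointwise Laplacian estimate in Step 2, namely controlling the sign of the curvature terms through the MTW condition. The second is justifying the differentiation uniformly as $t\to0$, since $\mu_t$ degenerates to atoms in that limit. For the latter I would first try differentiating on $[\epsilon,t]$ and then passing $\epsilon\to0$ using weak continuity.
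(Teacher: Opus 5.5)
There is a genuine gap in Step 2, at the AM--GM step. On a curved manifold, $DT_t(x)$ maps $T_xM$ to $T_{T_t(x)}M$. Identify these spaces by parallel transport along the geodesic with initial velocity $v=-\nabla\varphi_t(x)$. The differential then factors as $\widetilde{d}T_t(x)=Y\,B_t(x)$ (Cordero-Erausquin--McCann--Schmuckenschl\"ager). Here $B_t(x)=\mathrm{Hess}_x\bigl(\tfrac12 d_M^2(\cdot,T_t(x))\bigr)-\mathrm{Hess}_x\varphi_t$ is self-adjoint and positive semidefinite by $c$-concavity, and $Y=P_v\circ d(\exp_x)_v$ is the Jacobi-field matrix.

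The factor $Y$ need not be self-adjoint. In a parallel frame it solves $A''+R(s)A=0$. It is symmetric when the curvature matrix $R(s)$ is constant along the geodesic, as on round spheres, but not for general $\mathrm{MTW}(K>0)$ metrics. For a non-self-adjoint operator, $\Tr A\ge m(\det A)^{1/m}$ fails. For example, $A=c\,Q_\theta$ with $0<c\le1$ and $Q_\theta$ a planar rotation has $\det A=c^2$ but $\Tr A=2c\cos\theta<2c$. So your chain $\Delta_M\varphi_t\le m-\Tr DT_t\le m-m(\det DT_t)^{1/m}$ is unjustified at its second inequality. Moreover, passing through $\Tr DT_t$ discards exactly what is needed: with $\|Y\|_{\mathrm{op}}\le1$ one has $\Tr(YB_t)\le\Tr B_t$, so your intermediate bound is the weaker one.

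The repair, which is the paper's route in \Cref{prop:lap_dual_potential}, is never to take the trace of $DT_t$.
\begin{enumerate}
\item Write $\Delta_M\varphi_t=\Tr H-\Tr B_t$ with $H=\mathrm{Hess}_x\bigl(\tfrac12 d_M^2(\cdot,T_t(x))\bigr)$.
\item Show $H\le\mathbf I$ by comparing the index form of the Jacobi field with the test field $(1-s)P_{0\to s}w$. This uses the nonnegative sectional curvature implied by $\mathrm{MTW}(K>0)$.
\item Show $0<\det Y\le1$ by Rauch comparison and the absence of conjugate points.
\item Deduce from Monge--Amp\`ere that $\det B_t=\det\widetilde{d}T_t/\det Y\ge\mu_t/\mu(T_t)\ge\mu_t/\rho_{\max}$.
\item Apply AM--GM to the self-adjoint factor $B_t$.
\end{enumerate}
Your phrase ``the curvature terms carry a favorable sign'' is where all of this has to happen, and it is more than a sign check.

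There is also a smaller slip in Step 3. Jensen's inequality for the convex map $s\mapsto s^{1+1/m}$ gives $\mu_t^{1+1/m}\le\frac1n\sum_i k_t^M(x_i,\cdot)^{1+1/m}$, which is the wrong direction. The lower bound with the factor $n^{-1/m}$ that you state comes instead from superadditivity, $(\sum_i a_i)^{1+1/m}\ge\sum_i a_i^{1+1/m}$ for $a_i\ge0$, applied to $n\mu_t=\sum_i k_t^M(x_i,\cdot)$. With these two corrections, your Steps 1 and 3 and the integration in $t$ coincide with the paper's argument.
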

\addtocounter{theorem}{-1}
\endgroup

{Before proving the theorem, we establish a pointwise Laplacian bound
for the dual Kantorovich potential.  The argument combines the differential
factorization of Riemannian optimal transport maps
\citep{cordero2001riemannian}, the second-variation formula, and Rauch
comparison.  The strict MTW assumption supplies the nonnegative sectional
curvature used by the comparison argument.}

\begin{proposition}[Laplacian Bound for the Dual Kantorovich Potential]
\label{prop:lap_dual_potential}
Assume the same manifold and measure assumptions as in \Cref{thm:intrinsic_integrated}. Let $\psi_t$ be a smooth dual Kantorovich potential associated with the
optimal transport from $\mu_t\vol_M$ to $\mu\vol_M$, and let
    $T_t(y)=\exp_y\bigl(-\nabla_M\psi_t(y)\bigr)$
be the corresponding smooth optimal transport map. Assume that
$T_t(y)\notin\mathrm{Cut}(y)$. Then
\begin{equation}
\label{eq:laplacian_dual_formula}
    \Delta_M\psi_t(y)
    \leq
    m
    -
    m\left(
        \frac{\mu_t(y)}{\rho_{\max}}
    \right)^{1/m}.
\end{equation}
\end{proposition}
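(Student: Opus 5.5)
We combine the Monge--Amp\`ere equation for $T_t$ with an arithmetic--geometric mean inequality applied to the eigenvalues of the ``Hessian factor'' of $DT_t$. Following \citet{cordero2001riemannian}, at a point $y$ with $T_t(y)\notin\mathrm{Cut}(y)$ the differential factorizes as
\[
    DT_t(y)=Y(y)\bigl(H(y)-\nabla^2\psi_t(y)\bigr),
\]
where $H(y)=\nabla_y^2\bigl(\tfrac12d_M^2(\cdot,T_t(y))\bigr)$ is the Hessian of the squared distance to the fixed target point, and $Y(y)$ is the differential of $\exp_y$ at $-\nabla\psi_t(y)$, expressed through Jacobi fields. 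The $c$-convexity of $\psi_t$, which gives $c$-concavity in the sign convention of the map, makes $H-\nabla^2\psi_t$ symmetric and positive semidefinite. It is in fact positive definite, because $\mu_t>0$ and the map is a diffeomorphism.

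The change of variables for the smooth transport map gives
\[
    \mu_t(y)=\mu(T_t(y))\,\det Y(y)\,\det\bigl(H-\nabla^2\psi_t\bigr)(y)\leq \rho_{\max}\det Y\det(H-\nabla^2\psi_t).
\]
We then use the curvature assumption. The strict MTW condition forces nonnegative sectional curvature; this is the known implication that the MTW tensor on orthogonal pairs at $p=0$ reduces to a multiple of sectional curvature. Under nonnegative curvature, Rauch comparison gives two bounds: $H(y)\leq I$ (the Hessian of $\frac12 d^2$ is dominated by the Euclidean one), and $\det Y(y)\leq 1$ (Jacobi fields grow at most linearly, so $\exp_y$ does not expand volume beyond the flat case). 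Substituting these into the Monge--Amp\`ere relation yields
\[
    \det\bigl(H-\nabla^2\psi_t\bigr)\geq \frac{\mu_t(y)}{\rho_{\max}}.
\]

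Applying AM--GM to the positive eigenvalues of $H-\nabla^2\psi_t$ gives
\[
    \Tr(H-\nabla^2\psi_t)\geq m\bigl(\det(H-\nabla^2\psi_t)\bigr)^{1/m}\geq m\Bigl(\frac{\mu_t}{\rho_{\max}}\Bigr)^{1/m}.
\]
Since $\Tr H\leq m$ by $H\leq I$, it follows that
\[
    \Delta_M\psi_t=\Tr\nabla^2\psi_t\leq \Tr H-m(\mu_t/\rho_{\max})^{1/m}\leq m-m(\mu_t/\rho_{\max})^{1/m},
\]
which is \eqref{eq:laplacian_dual_formula}.

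\textbf{Main obstacle.} The hard step is the comparison $\det Y\leq 1$ together with the correct orientation of the factorization. Rauch comparison under $\mathrm{sec}\geq 0$ controls the Jacobi fields of $\exp_y$ in a way that can go the wrong direction (conjugate points shrink volume). I would handle this by writing $Y$ and $H$ jointly in terms of the Jacobi-field matrices $\mathcal J(1)$ and $\mathcal J'(1)$ along the geodesic from $y$ to $T_t(y)$, so that
\[
    \det Y\cdot\det(H-\nabla^2\psi_t)=\det\bigl(\mathcal J(1)\,\cdot\,(\ldots)\bigr).
\]
I would then check directly that the curvature sign places the product in the inequality direction needed above. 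The absence of the cut locus, assumed in the statement, keeps every such $\mathcal J(1)$ invertible. If the clean bound $\det Y\leq1$ fails, the fallback is to absorb $\det Y$ into the constant via the nonfocal assumption, at the price of a geometry-dependent factor; this would change $\rho_{\max}$ in \eqref{eq:laplacian_dual_formula}.
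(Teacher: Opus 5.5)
Your argument is correct and is essentially the paper's proof. It uses the same factorization $DT_t=Y(H-\nabla^2\psi_t)$ from \citet{cordero2001riemannian}, positive semidefiniteness from $c$-concavity, nonnegative sectional curvature from $\mathrm{MTW}(K>0)$, the comparisons $H\le\mathbf I$ and $\det Y\le 1$, the Monge--Amp\`ere equation, and AM--GM on the eigenvalues. The doubt in your last paragraph is unfounded and the fallback is unnecessary: with $\sec\ge 0$ and no conjugate points, $\|d(\exp_y)_v w\|\le\|w\|$ (the radial part is preserved and normal Jacobi fields grow at most linearly), which is exactly the direction you need; and since $\mu_t=\mu(T_t)\,|\det Y|\det(H-\nabla^2\psi_t)$ involves only $|\det Y|\le 1$, you can even skip the paper's continuity argument that $\det Y>0$.
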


\begin{proof}
Set
    $z=T_t(y)$,
    $v=-\nabla_M\psi_t(y)$,
so that $z=\exp_y(v)$. Since $z\notin\mathrm{Cut}(y)$, the function
$\frac12d_M^2(\cdot,z)$ is smooth near $y$. Throughout this proof,
we identify the Riemannian Hessian of a smooth function $h$ with the
self-adjoint endomorphism of $T_yM$ characterized by
\[
    g_y\bigl((\mathrm{Hess}_y h)u,w\bigr)
    =(\nabla^2h)_y(u,w),\qquad u,w\in T_yM.
\]
Thus traces, determinants, and compositions below are taken for
endomorphisms of $T_yM$. Define
\[
    H_y
    :=
    \mathrm{Hess}_y
    \left(
        \frac12d_M^2(\cdot,z)
    \right)
\quad \text{and}\quad
     B_t(y)
    :=
    H_y-\mathrm{Hess}_y\psi_t.
\]
With our dual convention, $\phi_t(x)+\psi_t(q)\leq c(x,q)$,
with equality at $(x,q)=(z,y)$; thus $\psi_t$ is $c$-concave.
Holding $z$ fixed, the function
$G_z(q):=c(z,q)-\phi_t(z)-\psi_t(q)$ has a local minimum at $y$.
Consequently,
\[
    g_y\bigl(B_t(y)w,w\bigr)
    =(\nabla^2G_z)_y(w,w)\geq0,
    \qquad w\in T_yM,
\]
so $B_t(y)$ is self-adjoint and positive semidefinite.

For the squared Riemannian distance cost, the diagonal value of the
MTW cross-curvature recovers the Riemannian sectional curvature, up to a
positive normalization factor. Hence the assumed $\mathrm{MTW}(K)$ condition
with $K>0$ implies, in particular, nonnegative sectional curvature
\citep{loeper2009regularity}. 

We first show that
\begin{equation}
\label{eq:hessian_squared_distance_upper}
    H_y\leq \mathbf I_{T_yM}
\end{equation}
as quadratic forms.
Let
\[
    \gamma(s)=\exp_y(sv),
    \qquad 0\leq s\leq1,
\]
be the minimizing geodesic from $y$ to $z$. For $w\in T_yM$, let $J$
be the Jacobi field along $\gamma$ satisfying
\[
    J(0)=w,
    \qquad
    J(1)=0.
\]
By the second-variation formula for the energy,
\[
    \langle H_yw,w\rangle
    =
    I_\gamma(J,J),
\]
where
\[
    I_\gamma(V,V)
    :=
    \int_0^1
    \left(
        \|D_sV\|^2
        -
        \left\langle
            R(V,\dot\gamma)\dot\gamma,V
        \right\rangle
    \right)\,ds
\]
is the index form along $\gamma$ \citep[Section 6.1]{jost2005riemannian}.

Let $P_{0\to s}:T_yM\to T_{\gamma(s)}M$ denote parallel transport
along $\gamma$, and consider the dummy vector field
\[
    V(s):=(1-s)P_{0\to s}w.
\]
It has the same endpoint values as $J$: $V(0)=w$ and $V(1)=0$.
Because $\gamma$ contains no conjugate point before $z$, the Jacobi
field $J$ minimizes the index form among vector fields with these
endpoint values. Therefore,
\[
    I_\gamma(J,J)\leq I_\gamma(V,V).
\]
Since $P_{0\to s}w$ is parallel,
    $D_sV=-P_{0\to s}w$,
    $\|D_sV\|=\|w\|$.
Moreover, {the nonnegative sectional curvature consequence of the
$\mathrm{MTW}(K>0)$ assumption} implies
\[
    \left\langle
        R(V,\dot\gamma)\dot\gamma,V
    \right\rangle
    \geq0.
\]
It follows that
\begin{align*}
    \langle H_yw,w\rangle
    &=
    I_\gamma(J,J) 
    \leq
    I_\gamma(V,V) 
    \leq
    \int_0^1\|D_sV\|^2\,ds 
    =
    \|w\|^2.
\end{align*}
This proves \eqref{eq:hessian_squared_distance_upper}, and hence
\begin{equation}
\label{eq:trace_H_upper}
    \Tr(H_y)\leq m.
\end{equation}

We next obtain a lower bound for $\det B_t(y)$. Let
$P_v:T_zM\to T_yM$ denote parallel transport from $z$ back to $y$, and
define
\[
    Y_y
    :=
    P_v\circ d(\exp_y)_v
    \in\mathrm{End}(T_yM).
\]
The differential factorization of the optimal transport map \citep{cordero2001riemannian}
gives
\begin{equation}
\label{eq:transport_differential_factorization}
    \widetilde dT_t(y)
    :=
    P_v\circ d(T_t)_y
    =
    Y_y B_t(y).
\end{equation}
{The same nonnegative sectional curvature consequence implies, by Jacobi Field comparison \citep[Theorem 11.9]{lee2018introduction},}
that
\[
    \|Y_yw\|\leq\|w\|,
    \qquad w\in T_yM.
\]
Since $z\notin\mathrm{Cut}(y)$, the operator $Y_y$ is nonsingular.
More explicitly, for $s \in [0, 1]$, define
\[
Y_y(s) := P_{s \to 0} \circ d(\exp_y)_{sv},
\]
where $P_{s \to 0}$ is parallel transport along the geodesic segment $\gamma(s) = \exp_y(sv)$ back to $y$. Then $Y_y(0) = I_{T_y M}$ and $Y_y(1) = Y_y$. Because the minimizing segment contains no conjugate points, $Y_y(s)$ remains non-singular for all $s \in [0, 1]$. By continuity, $\det Y_y(s)$ cannot change sign, which implies $\det Y_y > 0$. Since $\|Y_yw\|\leq\|w\|$, all singular values of $Y_y$ are at most one. Therefore
\begin{equation}\label{eq:det_range}
    0<\det Y_y\leq1.
\end{equation}

The Monge--Amp\`ere equation for $T_t$ gives
\begin{equation}\label{eq:det_range2}
    \mu_t(y)
    =
    \mu(T_t(y))
    \operatorname{Jac}(T_t)(y).
\end{equation}
Because parallel transport is an isometry,
\[
    \operatorname{Jac}(T_t)(y)
    =
    \sqrt{\det\bigl( dT_t(y)^* dT_t(y)\bigr)}
    =
    \sqrt{\det\bigl(\widetilde dT_t(y)^*\widetilde dT_t(y)\bigr)}
    =
    \left|
        \det\bigl(\widetilde dT_t(y)\bigr)
    \right|.
\]
{The strict positivity of $\mu_t$ and $\mu$, together with
\eqref{eq:det_range2}, gives $\operatorname{Jac}(T_t)(y)>0$.  On the other
hand, \Cref{eq:transport_differential_factorization}, $\det(Y_y)>0$, and the
positive semidefiniteness of $B_t(y)$ show that
$\det(\widetilde dT_t(y))\geq0$.  Since its absolute value equals the strictly
positive Jacobian, it follows that
$\det(\widetilde dT_t(y))>0$.  Consequently,}
\begin{equation}\label{eq:exp_jacobian_upper}
    \operatorname{Jac}(T_t)(y)
    =
    \det\bigl(\widetilde dT_t(y)\bigr).
\end{equation}
Using \Cref{eq:transport_differential_factorization,eq:det_range,eq:det_range2,eq:exp_jacobian_upper}, we obtain
\begin{align*}
    \det B_t(y)
    =
    \frac{
        \det\bigl(\widetilde dT_t(y)\bigr)
    }{
        \det(Y_y)
    } 
    \geq
    \det\bigl(\widetilde dT_t(y)\bigr) 
    =
    \frac{\mu_t(y)}{\mu(T_t(y))} 
    \geq
    \frac{\mu_t(y)}{\rho_{\max}}.
\end{align*}
Again because $ B_t(y)$ is positive semidefinite, the arithmetic-geometric
mean inequality applied to its eigenvalues gives
\[
    \Tr\bigl( B_t(y)\bigr)
    \geq
    m\left(\det B_t(y)\right)^{1/m}
    \geq
    m\left(
        \frac{\mu_t(y)}{\rho_{\max}}
    \right)^{1/m}.
\]
Finally, from the definition of $B_t$, we have
\begin{equation}\label{eq:lap_intermediate}
    \Delta_M\psi_t(y)
    =
    \Tr(H_y)-\Tr\bigl( B_t(y)\bigr).
\end{equation}
Combining this identity with \Cref{eq:trace_H_upper,eq:lap_intermediate} gives
\[
    \Delta_M\psi_t(y)
    \leq
    m
    -
    m\left(
        \frac{\mu_t(y)}{\rho_{\max}}
    \right)^{1/m},
\]
which proves \eqref{eq:laplacian_dual_formula}.
\end{proof}

Now that we have established the Laplacian bound for the dual Kantorovich potential, we can proceed to prove \Cref{thm:intrinsic_integrated}.
\begin{proof}[Proof of \Cref{thm:intrinsic_integrated}]
Set
\[
    F(t)
    :=
    W_{2,M}^2(\mu_t,\mu).
\]
For the cost
    $c(x,y)=\frac12d_M^2(x,y)$,
the Kantorovich dual formulation gives
\[
    \frac12F(t)
    =
    \int_M\phi_t(x)\mu(x)\,\vol(x)
    +
    \int_M\psi_t(y)\mu_t(y)\,\vol(y),
\]
where $\phi_t$ and $\psi_t$ are dual Kantorovich potentials.

Under the nonfocal cut-locus and $\mathrm{MTW}(K>0)$ assumptions,
together with the stated density bounds, the dual potentials and the
optimal transport map have the regularity required for the following
differentiation \citep{figalli2009regularity}. Applying the envelope theorem and using
\[
    \partial_t\mu_t=\frac12\Delta_M\mu_t,
\]
we obtain
\begin{align*}
    \frac12F'(t)
    &=
    \int_M
        \psi_t(y)\partial_t\mu_t(y)\,\vol(y)
    =
    \frac12
    \int_M
        \psi_t(y)\Delta_M\mu_t(y)\,\vol(y).
\end{align*}
Since $M$ is compact and has no boundary, integration by parts gives
\[
    \int_M
        \psi_t\Delta_M\mu_t\,d\vol
    =
    \int_M
        \mu_t\Delta_M\psi_t\,d\vol.
\]
Therefore,
\begin{equation}
\label{eq:first_variation_correct_normalization}
    F'(t)
    =
    \int_M
        \Delta_M\psi_t(y)\mu_t(y)\,\vol(y).
\end{equation}

By \Cref{prop:lap_dual_potential}, for $\mu_t$-almost every $y\in M$,
\[
    \Delta_M\psi_t(y)
    \leq
    m
    -
    m\left(
        \frac{\mu_t(y)}{\rho_{\max}}
    \right)^{1/m}.
\]
Substituting this estimate into
\eqref{eq:first_variation_correct_normalization} yields
\begin{align}
    F'(t)
    &\leq
    m
    -
    m\rho_{\max}^{-1/m}
    \int_M
        \mu_t(y)^{1+1/m}\,\vol(y).
\label{eq:first_variation_after_mtw}
\end{align}
Since
\[
    \mu_t(y)
    =
    \frac1n\sum_{i=1}^n k_t^M(y,x_i)
\]
and the function
    $s\longmapsto s^{1+1/m}$
is superadditive on $[0,\infty)$, we have
\[
    \mu_t(y)^{1+1/m}
    \geq
    \frac{1}{n^{1+1/m}}
    \sum_{i=1}^n
        k_t^M(y,x_i)^{1+1/m}.
\]
Integrating over $M$ gives
\begin{align}
    \int_M
        \mu_t(y)^{1+1/m}\,\vol(y)
    \geq
    \frac{1}{n^{1+1/m}}
    \sum_{i=1}^n
    \int_M
        k_t^M(y,x_i)^{1+1/m}\,\vol(y).
\label{eq:heat_kernel_superadditivity}
\end{align}

Choose a fixed radius $r_0>0$ smaller than the injectivity radius of $M$. In normal coordinates $y=\exp_x(v)$ with $\|v\|<r_0$, the small-time heat-kernel expansion gives
\[
    k_t^M(\exp_x(v),x)
    =
    (2\pi t)^{-m/2}
    \exp\left(-\frac{\|v\|^2}{2t}\right)
    \bigl(u_0(x,v)+\mathcal O(t)\bigr),
\]
where $u_0(x,v)=1+\mathcal O(\|v\|^2)$.
The Riemannian volume element in these coordinates is
\[
    d\mathrm{vol}_M(\exp_x(v))
    =
    \bigl(1+\mathcal O(\|v\|^2)\bigr)\,dv.
\]
The estimates are uniform in $x$ by compactness. Restricting the integral to this normal neighborhood and rescaling $v=\sqrt{t}\,z$, the spatial correction contributes a relative $\mathcal O(t)$ term. The omitted Euclidean Gaussian tail is exponentially small. Consequently,
uniformly in $x\in M$,
\[
    \int_M
        k_t^M(y,x)^{1+1/m}\,\vol(y)
    \geq
    \frac{1}{\sqrt{2\pi}}
    \left(
        \frac{m}{m+1}
    \right)^{m/2}
    t^{-1/2}
    (1-C_Mt)
\]
for sufficiently small $t>0$. Using this in
\eqref{eq:heat_kernel_superadditivity}, we obtain
\begin{equation}
\label{eq:heat_kernel_lp_lower_bound}
    \int_M
        \mu_t(y)^{1+1/m}\,\vol(y)
    \geq
    \frac{n^{-1/m}}{\sqrt{2\pi}}
    \left(
        \frac{m}{m+1}
    \right)^{m/2}
    t^{-1/2}
    (1-C_Mt).
\end{equation}

Decrease $t_0>0$, if necessary, so that
\[
    1-C_Mt\geq\frac12
    \qquad
    \text{for every }0<t<t_0.
\]
It follows that
\begin{equation}
\label{eq:heat_kernel_lp_simplified}
    \int_M
        \mu_t(y)^{1+1/m}\,\vol(y)
    \geq
    \frac{n^{-1/m}}{2\sqrt{2\pi}}
    \left(
        \frac{m}{m+1}
    \right)^{m/2}
    t^{-1/2}.
\end{equation}

Substituting \eqref{eq:heat_kernel_lp_simplified} into
\eqref{eq:first_variation_after_mtw}, and using the definition
\eqref{eq:def_intrinsic_Cm}, gives
\[
    F'(t)
    \leq
    -\frac12C_mn^{-1/m}t^{-1/2}
    +m.
\]
This proves \eqref{eq:differential_w2_bound_explicit}. Since the
right-hand side converges to $-\infty$ as $t\to0^+$, we also obtain
\[
    \lim_{t\to0^+}F'(t)=-\infty.
\]

The heat semigroup converges weakly to its initial measure, and its
second moments remain uniformly bounded because $M$ is compact.
Therefore,
\[
    \mu_t\longrightarrow\hat\mu_n
    \qquad\text{in }W_{2,M}
\]
as $t\to0^+$. Consequently,
\[
    F(t)\longrightarrow
    W_{2,M}^2(\hat\mu_n,\mu).
\]
Integrating \eqref{eq:differential_w2_bound_explicit} from $0$ to $t$
gives
\begin{align*}
    F(t)
    &\leq
    W_{2,M}^2(\hat\mu_n,\mu)
    -
    \frac12 C_mn^{-1/m}
    \int_0^t s^{-1/2}\,ds
    +
    m\int_0^t ds\\
    &=
    W_{2,M}^2(\hat\mu_n,\mu)
    -
    C_mn^{-1/m}\sqrt t
    +
    mt.
\end{align*}
Setting $t=\sigma^2$ yields
\[
    W_{2,M}^2(\mu_\sigma,\mu)
    \leq
    W_{2,M}^2(\hat\mu_n,\mu)
    -
    C_mn^{-1/m}\sigma
    +
    m\sigma^2.
\]
This completes the proof.
\end{proof}

\begin{remark}[The round sphere]\label{rmk:round-sphere}
While the global nonfocal cut-locus assumption in \Cref{thm:intrinsic_integrated} provides a sufficient condition to guarantee the regularity of optimal transport maps, it is not strictly necessary in every geometry. The round sphere $M=\mathbb{S}_R^m$ fails this condition because its cut locus consists of conjugate antipodal points. Nevertheless, for the squared geodesic distance cost on the sphere, the strong MTW condition holds outside the antipodal cut locus, and optimal maps between smooth, strictly positive densities are known to stay away from this cut locus, remaining globally $C^\infty$ smooth \citep{loeper2011regularity}. In our setting, $\mu_t$ and $\mu$ are smooth and strictly positive for all $t>0$, ensuring that the first-variation argument applies on every subinterval $[\varepsilon,t_0]$ with $\varepsilon>0$. Integrating over $[\varepsilon,t_0]$ and taking the limit $\varepsilon\to0^+$ (using $\mu_\varepsilon\to\hat{\mu}_n$ in $W_{2,M}$) yields the exact integrated estimate of \Cref{thm:intrinsic_integrated}. Consequently, the round-sphere experiment in \Cref{sec:experiments} falls within a regime where the required transport regularity is fully established via sphere-specific regularity theory.
\end{remark}

\subsection{Proof of Theorem 3: Dimension Reduction via Latent-Space Smoothing}
\label{sec:latent_smoothing}

The combined ambient bound contains the dimensional term
$D\sigma^2=m\sigma^2+(D-m)\sigma^2$, together with geometric
corrections. The decomposition distinguishes the intrinsic contribution
from the additional normal-noise contribution.

Latent smoothing introduces noise in $\mathbb R^d$ and maps it back through
the decoder. Its explicit orthogonal contribution is
$L_{\mathrm{orth}}^2(d-m)\sigma^2$, while tangential gain, isometry defect,
reconstruction error, and geometry also enter the quadratic coefficient.
Thus the comparison uses the full coefficients
$\beta_{\mathrm{lat}}$ and $\beta_{\mathrm{amb}}$, rather than codimensions
alone. The decoder bounds quantify when reducing the noise dimension
can yield a tighter optimized surrogate.

For completeness, we restate Theorem \ref{thm:latent_decomposition} with all explicit geometric conditions, local Jacobian bounds, and non-asymptotic residual terms before presenting the proof.


\begingroup
\renewcommand{\thetheorem}{\ref{thm:latent_decomposition}}
\renewcommand{\theHtheorem}{latent-restatement}
\begin{theorem}[Latent-space smoothing bound]
\label{thm:latent_decomposition_effective_gain}
Let $M$ be a compact, connected, $m$ dimensional Riemannian manifold
embedded in $\mathbb R^D$ with reach $\tau>0$ and ambient diameter at
most $R$. Assume the hypotheses of \Cref{thm:intrinsic_integrated},
or the round-sphere setting of \Cref{rmk:round-sphere}, so that the
intrinsic smoothing bound applies. 

Let $f:\mathbb R^D\to\mathbb R^d$ and
$g:\mathbb R^d\to\mathbb R^D$, where $m\leq d\leq D$, and define
$\delta$ as in \Cref{eq:def-delta}. Assume that $f$ is differentiable
at each $x_i$ and that
$g\in C^2(\mathbb R^d;\mathbb R^D)$ has uniformly bounded first and
second derivatives. Set
\begin{equation}
\label{eq:decoder_uniform_C2_constant}
    C_g
    :=
    \max\left\{
        1,
        \sup_{z\in\mathbb R^d}\|J_g(z)\|_{\mathrm{op}},
        \sup_{z\in\mathbb R^d}\|D^2g(z)\|_{\mathrm{op}}
    \right\}
    <\infty.
\end{equation}
For each $i$, assume that
\[
    U_i:=\left.J_f(x_i)\right|_{T_{x_i}M}:T_{x_i}M\to V_i
\]
is a linear isometry onto an $m$-dimensional subspace
$V_i\subset\mathbb R^d$, and define
\[
    A_i
    :=
    \left.J_g(f(x_i))J_f(x_i)\right|_{T_{x_i}M}
    :T_{x_i}M\to\mathbb R^D.
\]
Assume that there exist a global tangential gain
$\rho_\parallel>0$ and an effective isometry defect
$\varepsilon_{\mathrm{iso}}\geq0$ such that
\begin{equation}
\label{eq:effective_gain_conformal_assumption}
    \varepsilon_{\mathrm{iso}}
    =
    \max_{1\leq i\leq n}
    \min_{O\in O(T_{x_i}M)}
    \left\|A_i-\rho_\parallel\iota_iO\right\|_{\mathrm{op}},
\end{equation}
where $\iota_i:T_{x_i}M\hookrightarrow\mathbb R^D$ is the inclusion
map. Let $O_i$ be a minimizing orthogonal map for each $i$. Assume also
that
\[
    \|J_g(f(x_i))v^\perp\|
    \leq
    L_{\mathrm{orth}}\|v^\perp\|,
    \qquad v^\perp\in V_i^\perp.
\]
Define
    $\mu_{\mathrm{out}}
    :=
    g_\#\!\left(
        (f_\#\hat\mu_n)\ast
        \mathcal N(0,\sigma^2\mathbf I_d)
    \right)$
and
    $W_0:=W_{2,M}(\hat\mu_n,\mu).$
For sufficiently small $\sigma>0$ such that
$\rho_\parallel\sigma$ lies in the small-bandwidth regime of
\Cref{thm:intrinsic_integrated} and
    $0<\sigma
    <
    \frac{1}{2\sqrt m\,\rho_\parallel}
    \min\{W_0,2\tau\},$
one has
\begin{equation}
\label{eq:effective_gain_final_bound}
    W_2^2(\mu_{\mathrm{out}},\mu)
    \leq
    \mathcal E_{\mathrm{const}}
    -\Gamma_{\mathrm{eff}}\sigma
    +L_{\mathrm{latent}}\sigma^2
    +\mathcal O(\sigma^3),
\end{equation}
where
\begin{align*}
    \mathcal E_{\mathrm{const}}
    &:=(1+\varepsilon_{\mathrm{iso}})W_0^2
       +2\delta W_0+\delta^2,\\
    \Gamma_{\mathrm{eff}}
    &:=\rho_\parallel C_mn^{-1/m}
       \left(1+\frac{\delta}{W_0} \right),\\
    L_{\mathrm{latent}}
    &:=L_{\mathrm{orth}}^2(d-m)
       +\varepsilon_{\mathrm{iso}}^2m+Q.
\end{align*}
Here $C_m$ and $C$ are the constants from
\Cref{thm:intrinsic_integrated}, and $Q$ is defined in \Cref{eq:quantities}.
The constant implicit in $\mathcal O(\sigma^3)$ may depend on the fixed
geometric, empirical, and network parameters appearing above, but is
independent of $\sigma$.
\end{theorem}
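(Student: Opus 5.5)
We will construct an explicit coupling and bound the transport cost term by term, in the same way as the proof of \Cref{thm:decomposition}. Fix an index $i$ and let $z_i:=f(x_i)$. Split the latent noise into two independent parts: $\xi=\xi^\parallel+\xi^\perp$ with $\xi\sim\mathcal N(0,\sigma^2\mathbf I_d)$, $\xi^\parallel\in V_i$ and $\xi^\perp\in V_i^\perp$. Since $U_i$ is an isometry onto $V_i$, the vector $w:=U_i^{-1}\xi^\parallel$ is distributed as $\mathcal N(0,\sigma^2\mathbf I_{T_{x_i}M})$. A second-order Taylor expansion of $g$, with remainder controlled by $C_g$, gives
\[
    g(z_i+\xi)=g(z_i)+A_iw+J_g(z_i)\xi^\perp+r_i,
    \qquad \|r_i\|\le \tfrac12 C_g\|\xi\|^2 .
\]
We then rewrite $A_iw=\rho_\parallel O_iw+(A_i-\rho_\parallel\iota_iO_i)w$. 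Because $O_iw$ is again a standard tangent Gaussian, the point $x_i+\rho_\parallel O_iw$ is exactly the tangent Gaussian of \Cref{lem:intrinsic_one_step} at scale $\rho_\parallel\sigma$. Using the synchronous coupling of that lemma, we realize a Riemannian Brownian endpoint $X\sim\mu_{\rho_\parallel\sigma}$ with $\|X-(x_i+\rho_\parallel O_iw)\|_{L^2}\le 2m\rho_\parallel^2\sigma^2/\tau$. Next we glue $X$ to $Y\sim\mu$ through an optimal coupling for $W_{2,M}$. By \Cref{thm:intrinsic_integrated} and \eqref{eq:ambient_intrinsic_w2_comparison}, this gives $\mathbb E\|X-Y\|^2\le W_0^2-C_mn^{-1/m}\rho_\parallel\sigma+m\rho_\parallel^2\sigma^2$. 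The hypothesis $\rho_\parallel\sigma$ small places us in that regime, and $\sigma<\tau/(\sqrt m\rho_\parallel)$ makes \Cref{lem:intrinsic_one_step} applicable.

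With this coupling we decompose
\[
    \mathrm{out}-Y=(X-Y)+e_i+E_\parallel+J_g(z_i)\xi^\perp+E_{\mathrm{geo}},
\]
where the terms are defined as follows:
\begin{itemize}
    \item $e_i:=g(f(x_i))-x_i$ is the reconstruction error, with root-mean-square size $\delta$;
    \item $E_\parallel:=(A_i-\rho_\parallel\iota_iO_i)w$, which satisfies $\mathbb E\|E_\parallel\|^2\le\varepsilon_{\mathrm{iso}}^2m\sigma^2$;
    \item $J_g(z_i)\xi^\perp$ is the orthogonal leakage, of size at most $L_{\mathrm{orth}}^2(d-m)\sigma^2$;
    \item $E_{\mathrm{geo}}:=r_i+(x_i+\rho_\parallel O_iw-X)$, whose $L^2$ norm is $O(\sigma^2)$.
\end{itemize}
We expand the square. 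The diagonal terms give $W_0^2-\rho_\parallel C_mn^{-1/m}\sigma+\delta^2+\bigl(L_{\mathrm{orth}}^2(d-m)+\varepsilon_{\mathrm{iso}}^2m+m\rho_\parallel^2\bigr)\sigma^2+O(\sigma^3)$. The cross terms involving $\xi^\perp$ vanish in expectation by Gaussian symmetry, since $\xi^\perp$ is independent of $(w,X,Y)$, exactly as $T_3$ did in \Cref{thm:decomposition}. The leading part of $r_i$ is quadratic in $\xi$. It therefore contributes cross terms of order $\sigma^2\cdot W$ and $\sigma^3$; these go into $Q$ together with $m\rho_\parallel^2$ and the reach corrections $4m\rho_\parallel^2 W_0/\tau$.

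The two remaining cross terms produce the stated form. For the reconstruction cross term, Cauchy--Schwarz gives $2\mathbb E\langle X-Y,e\rangle\le 2\delta\|X-Y\|_{L^2}$. Bounding $\|X-Y\|_{L^2}\le \sqrt{W_0^2-a\sigma+b\sigma^2}\le W_0-\frac{a}{2W_0}\sigma+O(\sigma^2)$ with $a=\rho_\parallel C_mn^{-1/m}$ yields $2\delta W_0-\frac{\delta}{W_0}\rho_\parallel C_mn^{-1/m}\sigma$. This is exactly the augmentation $(1+\delta/W_0)$ in $\Gamma_{\mathrm{eff}}$, and the condition $\sigma<W_0/(2\sqrt m\rho_\parallel)$ keeps the square-root expansion valid. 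The tangential-defect cross term $2\mathbb E\langle X-Y,E_\parallel\rangle$ is handled by Young's inequality, $2ab\le\varepsilon_{\mathrm{iso}}a^2+\varepsilon_{\mathrm{iso}}^{-1}b^2$. Here $b^2\le \varepsilon_{\mathrm{iso}}^2 m\sigma^2$, so this produces $\varepsilon_{\mathrm{iso}}W_0^2$ plus an $\varepsilon_{\mathrm{iso}}m\sigma^2$ term absorbed into $Q$. The remaining pairwise cross terms, such as $\langle e,E_\parallel\rangle$, are handled by Cauchy--Schwarz and contribute $\delta\sigma$-type terms. We absorb these into $Q\sigma^2$ using $\delta$-dependent constants, or else note that $E_\parallel$ is mean zero given $i$, so $\langle e_i,E_\parallel\rangle$ vanishes in expectation. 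A Gaussian tail truncation like $C_{\mathrm{tail}}$ is not needed, because $g$ has globally bounded derivatives; the Taylor remainder is controlled for all $\xi$ via $\mathbb E\|\xi\|^4=O(\sigma^4)$.

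The main obstacle will be the linear-in-$\sigma$ bookkeeping. We must make sure that no cross term produces a positive $O(\sigma)$ contribution that would cancel $-\Gamma_{\mathrm{eff}}\sigma$. In particular, $\langle X-Y,E_\parallel\rangle$ is a priori of order $W_0\varepsilon_{\mathrm{iso}}\sqrt m\,\sigma$, and the Young splitting above is precisely what converts it into the $\varepsilon_{\mathrm{iso}}W_0^2$ constant, at the price of that constant. We will also collect every $\sigma^2$ coefficient explicitly into the quantity $Q$ of \eqref{eq:quantities}, namely $m\rho_\parallel^2$, the reach terms, and the $C_g$ terms from $r_i$ and its interaction with $e_i$ and $X-Y$. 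The $\sigma^3$ remainders come from the mixed $r_i$ terms and from $\|X-\tilde X\|_{L^2}\cdot\sigma$.
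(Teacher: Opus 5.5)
Your proposal is correct and follows the paper's proof essentially step for step. It uses the same split of the latent noise into $V_i$ and $V_i^\perp$ components, the rotated tangent Gaussian at scale $\rho_\parallel\sigma$ coupled to the heat flow via \Cref{lem:intrinsic_one_step}, and the glued intrinsic optimal coupling to $\mu$. Like the paper, it applies Young's inequality to the tangential-defect cross term to obtain $\varepsilon_{\mathrm{iso}}W_0^2$, and concavity of the square root on $2\delta W_r$ to obtain the factor $(1+\delta/W_0)$. The only imprecision is that the cross term between $J_g(z_i)\xi^\perp$ and the Taylor remainder $r_i$ does not vanish by independence, since $r_i$ depends on $\xi^\perp$; Cauchy--Schwarz nonetheless makes it $O(\sigma^3)$, which your later remark on $r_i$ already covers.
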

\addtocounter{theorem}{-1}
\endgroup

\begin{proof}
Define the following quantities:
\begin{align}
    C_{\mathrm{geom}} &:=
       {\frac{2m}{\tau}},\quad
    G_2 :=
       \frac{C_g}{2}\sqrt{d(d+2)},\quad \gamma_n :=C_mn^{-1/m},\nonumber \\
    Q &:=
       \rho_\parallel^2 m 
       \left(
           1+\varepsilon_{\mathrm{iso}}
           +\frac{\delta}{W_0}
       \right)
       +m\varepsilon_{\mathrm{iso}}
       +2\left(
           C_{\mathrm{geom}}\rho_\parallel^2+G_2
       \right)(W_0+\delta).\label{eq:quantities}
\end{align}
Let
\[
    M_i:=A_i-\rho_\parallel\iota_iO_i,
    \qquad
    \|M_i\|_{\mathrm{op}}\leq\varepsilon_{\mathrm{iso}},
\]
and set
\[
    r:=\rho_\parallel\sigma,
    \qquad
    \mu_r:=k_r^M\ast\hat\mu_n,
    \qquad
    W_r:=W_{2,M}(\mu_r,\mu).
\]

We define a joint coupling to estimate
$W_2^2(\mu_{\mathrm{out}},\mu)$. Let
$X_n\sim\hat\mu_n$, so that $X_n=x_i$ with probability $1/n$.
Conditionally on $X_n=x_i$, sample
    $\xi\sim\mathcal N(0,\mathbf I_d)$
and decompose it orthogonally as
\[
    \xi=\xi^\parallel+\xi^\perp,
    \qquad
    \xi^\parallel\in V_i,
    \qquad
    \xi^\perp\in V_i^\perp.
\]
Since $U_i:T_{x_i}M\to V_i$ is an isometry,
\[
    W^\parallel:=U_i^{-1}\xi^\parallel
    \sim\mathcal N(0,\mathbf I_{T_{x_i}M}).
\]
The rotated vector
    $v:=O_iW^\parallel$
has the same distribution.

By \Cref{lem:intrinsic_one_step}, we may couple the flat tangent step
$x_i+r\iota_iv$ to a point $z_i$ with conditional law
$k_r^M(x_i,\cdot)$ so that
\begin{equation}
\label{eq:effective_gain_one_step_L2}
    \left\|
        z_i-(x_i+r\iota_iv)
    \right\|_{L^2}
    \leq
    C_{\mathrm{geom}}r^2.
\end{equation}
After averaging over $i$, the variable $z_i$ has law $\mu_r$.
Disintegrate an optimal coupling for the intrinsic cost between
$\mu_r$ and $\mu$, and conditionally on $z_i$ sample $y$ from this
coupling. Then
\[
    \mathbb E\big[d_M^2(z_i,y)\big]=W_r^2
\]
and, since the ambient Euclidean distance is bounded by the intrinsic
distance on $M$,
\begin{equation}
\label{eq:effective_gain_T1_L2}
    \|y-z_i\|_{L^2}\leq W_r.
\end{equation}
Conditionally on $X_n=x_i$, the orthogonal noise $\xi^\perp$ is
independent of $(z_i,y)$.

Let
\[
    Z_{\mathrm{out}}
    :=g(f(x_i)+\sigma\xi).
\]
We use the same four-term decomposition as in the original argument:
\begin{align}
\label{eq:effective_gain_exact_expansion}
    y-Z_{\mathrm{out}}
    ={}&
    (y-z_i)
    +\bigl(z_i-(x_i+r\iota_iv)\bigr)\nonumber\\
    &+\bigl((x_i+r\iota_iv)
        -(g(f(x_i))+r\iota_iv)\bigr)\nonumber\\
    &+\bigl((g(f(x_i))+r\iota_iv)
        -g(f(x_i)+\sigma\xi)\bigr)\nonumber\\
    =:{}&T_1+T_2+T_3+T_4.
\end{align}
Writing
\[
    d_i:=g(f(x_i))-x_i,
\]
the third term is simply
\[
    T_3=-d_i,
    \qquad
    \mathbb E\|T_3\|^2=\delta^2.
\]

Taylor's theorem and \Cref{eq:decoder_uniform_C2_constant} give
\[
    g(f(x_i)+\sigma\xi)
    =g(f(x_i))+\sigma J_g(f(x_i))\xi+R_g(\sigma\xi),
\]
where
\begin{equation}
\label{eq:decoder_taylor_remainder_global}
    \|R_g(\sigma\xi)\|
    \leq
    \frac{C_g}{2}\sigma^2\|\xi\|^2.
\end{equation}
Since $\xi^\parallel=U_iW^\parallel$,
\[
    J_g(f(x_i))\xi^\parallel
    =A_iW^\parallel
    =\rho_\parallel\iota_iv+M_iW^\parallel.
\]
Consequently,
\begin{equation}
\label{eq:effective_gain_T4_expansion}
    T_4
    =
    -\sigma M_iW^\parallel
    -\sigma J_g(f(x_i))\xi^\perp
    -R_g(\sigma\xi).
\end{equation}

We first estimate the four squared terms. From
\Cref{eq:effective_gain_T1_L2,eq:effective_gain_one_step_L2},
\[
    \mathbb E\|T_1\|^2\leq W_r^2,
    \qquad
    \mathbb E\|T_2\|^2
    \leq C_{\mathrm{geom}}^2r^4,
    \qquad
    \mathbb E\|T_3\|^2=\delta^2.
\]
The two linear terms in \Cref{eq:effective_gain_T4_expansion} are
orthogonal in expectation because $W^\parallel$ and $\xi^\perp$ are
independent and centered conditionally on $X_n=x_i$. Moreover,
\[
    \mathbb E\|M_iW^\parallel\|^2
    \leq
    \varepsilon_{\mathrm{iso}}^2m,
    \qquad
    \mathbb E\|J_g(f(x_i))\xi^\perp\|^2
    \leq
    L_{\mathrm{orth}}^2(d-m).
\]
The Gaussian identity
$\mathbb E\|\xi\|^4=d(d+2)$ and
\Cref{eq:decoder_taylor_remainder_global} imply
\[
    \|R_g(\sigma\xi)\|_{L^2}
    \leq G_2\sigma^2.
\]
It follows that
\begin{equation}
\label{eq:effective_gain_T4_square}
    \mathbb E\|T_4\|^2
    \leq
    \left(
        \varepsilon_{\mathrm{iso}}^2m
        +L_{\mathrm{orth}}^2(d-m)
    \right)\sigma^2
    +\mathcal O(\sigma^3).
\end{equation}

We next estimate the six cross terms. By Cauchy--Schwarz,
\begin{align*}
    2\mathbb E\langle T_1,T_2\rangle
    &\leq
    2C_{\mathrm{geom}}r^2W_r,\quad
    2\mathbb E\langle T_1,T_3\rangle
    \leq
    2\delta W_r.
\end{align*}
The component of $2\mathbb E\langle T_1,T_4\rangle$ containing
$\xi^\perp$ vanishes by conditional independence and centering. For
the tangential defect, Cauchy--Schwarz and Young's inequality give
\begin{align}
\label{eq:effective_gain_defect_cross}
    2\mathbb E
    \left\langle
        y-z_i,-\sigma M_iW^\parallel
    \right\rangle
    &\leq
    2\varepsilon_{\mathrm{iso}}\sqrt m\,\sigma W_r 
    \leq
    \varepsilon_{\mathrm{iso}}W_r^2
    +m\varepsilon_{\mathrm{iso}}\sigma^2.
\end{align}
The Taylor remainder satisfies
\[
    2\mathbb E
    \left\langle
        y-z_i,-R_g(\sigma\xi)
    \right\rangle
    \leq
    2G_2\sigma^2W_r.
\]
The remaining cross terms satisfy
\begin{align*}
    2\mathbb E\langle T_2,T_3\rangle
    &\leq
    2\delta C_{\mathrm{geom}}r^2,\\
    2\mathbb E\langle T_2,T_4\rangle
    &=\mathcal O(\sigma^3),\\
    2\mathbb E\langle T_3,T_4\rangle
    &\leq
    2\delta G_2\sigma^2.
\end{align*}
For the last line, the two linear components of $T_4$ have zero
conditional mean given $X_n=x_i$, so only the Taylor remainder
contributes.

Combining the direct and cross-term estimates yields
\begin{align}
\label{eq:effective_gain_pre_intrinsic}
    W_2^2(\mu_{\mathrm{out}},\mu)
    \leq{}&
    (1+\varepsilon_{\mathrm{iso}})W_r^2
    +2\delta W_r+\delta^2\nonumber\\
    &+\Bigl[
        \varepsilon_{\mathrm{iso}}^2m
        +m\varepsilon_{\mathrm{iso}}
        +L_{\mathrm{orth}}^2(d-m)
    \Bigr]\sigma^2\nonumber\\
    &+2\left(
        C_{\mathrm{geom}}\rho_\parallel^2+G_2
    \right)(W_r+\delta)\sigma^2
    +\mathcal O(\sigma^3).
\end{align}

We now use the intrinsic estimate. By
\Cref{thm:intrinsic_integrated},
\begin{equation}
\label{eq:effective_gain_intrinsic_integrated}
    W_r^2
    \leq
    {W_0^2-\gamma_nr+ m r^2}.
\end{equation}
Because $W_0>0$, concavity of the square-root function gives
\begin{equation}
\label{eq:effective_gain_W_expansion}
    W_r
    \leq
    {W_0-\frac{\gamma_n}{2W_0}r
    +\frac{ m }{2W_0}r^2}.
\end{equation}
Finally, in the term already
multiplied by $\sigma^2$ in
\Cref{eq:effective_gain_pre_intrinsic}, we may replace $W_r$ by $W_0$;
the resulting difference is $\mathcal O(\sigma^3)$.
Substituting $r=\rho_\parallel\sigma$ into
\Cref{eq:effective_gain_pre_intrinsic} and using the preceding bounds
initially gives the favorable linear coefficient
$-\rho_\parallel\gamma_n(1+\varepsilon_{\mathrm{iso}}
+\delta/W_0)$. Discarding the additional nonpositive term
$-\rho_\parallel\gamma_n\varepsilon_{\mathrm{iso}}\sigma$ yields the
slightly weaker but simpler bound
\begin{align*}
    W_2^2(\mu_{\mathrm{out}},\mu)
    \leq{}&
    (1+\varepsilon_{\mathrm{iso}})W_0^2
    +2\delta W_0+\delta^2
    -
    \rho_\parallel\gamma_n
    \left(1+\frac{\delta}{W_0} \right)\sigma\\
    &+\Bigg[
        L_{\mathrm{orth}}^2(d-m)
        +\varepsilon_{\mathrm{iso}}^2m
        +\rho_\parallel^2 m 
        \left(
            1+\varepsilon_{\mathrm{iso}}
            +\frac{\delta}{W_0}
        \right)\\
    &\hspace{1.5cm}
        +m\varepsilon_{\mathrm{iso}}
        +2\left(
            C_{\mathrm{geom}}\rho_\parallel^2+G_2
        \right)(W_0+\delta)
    \Bigg]\sigma^2
    +\mathcal O(\sigma^3).
\end{align*}
Recalling $\gamma_n=C_mn^{-1/m}$ and the definitions of
$\mathcal E_{\mathrm{const}}$, $\Gamma_{\mathrm{eff}}$,
$L_{\mathrm{latent}}$, and $Q$ proves
\Cref{eq:effective_gain_final_bound}.

Because the first and second derivatives of $g$ are uniformly bounded
on all of $\mathbb R^d$, the Taylor estimate
\Cref{eq:decoder_taylor_remainder_global} holds for every Gaussian
realization. Thus no truncation event or separate Gaussian tail term is
needed under the assumptions of the theorem.
\end{proof}

\section{Numerical Experiments}
\label{sec:experiments}
We conduct numerical experiments on synthetic manifolds to evaluate the theoretical predictions of \Cref{thm:decomposition,thm:intrinsic_integrated,thm:latent_decomposition}. Specifically, we examine the interaction among intrinsic variance reduction, ambient-dimensional inflation, and autoencoder reconstruction fidelity. Throughout the experiments, we use the sphere as a model geometry for the theoretical results, as discussed in \Cref{rmk:round-sphere}. We also consider a manifold outside the stated geometric regime to investigate whether the predicted behavior may persist beyond the assumptions of the theorems.
The scripts for reproducing all experimental results are available at
\url{https://github.com/wonjunee/intrinsic_smoothing}.

\subsection{Ambient Smoothing and Intrinsic Variance Reduction}
In this section, we will numerically validate the dependence of the ambient smoothing error $W_2^2(k_\sigma * \hat{\mu}_n,\mu)$ and the optimal bandwidth $\sigma^*_{\mathrm{amb}}$ on the sample size $n$ and ambient dimension $D$, using synthetic manifold data and discrete optimal transport computation. In particular, the quadratic surrogate $\Psi_{\mathrm{amb}}$ 
\eqref{eq:psi_amb} results in the following scaling
\[
\sigma^*_{\mathrm{amb}} = \frac{\alpha_n}{2\beta_{\mathrm{amb}}} \asymp \frac{n^{-1/m}}{D} \iff \log(\sigma^*_{\mathrm{amb}}) = (-1)\log(D) + \left(-\frac{1}{m}\right)\log(n) + \mathcal{O}(1).
\]
Section \ref{sec:impact of ambient D} and \ref{sec:impact of sample size n} examine the impact of $D$ and $n$, respectively. 

The experiments focus on two MTW$(K>0)$ manifolds $\mathbb{S}^3$ and $\mathbb{S}^4$, and one non-MTW$(K>0)$ manifold $\mathbb{S}^1\times \mathbb{S}^2$. The non-MTW($K>0$) example tests the generality of our predicted scaling. 
In all these examples, the corresponding target measure $\mu$ is taken to be the uniform distribution supported on the manifold.
For each manifold tested, we compute the empirical error $W_2^2(k_\sigma *\hat{\mu}_n, \mu)$ over 81 quadratically spaced bandwidths $\sigma \in [0, 0.6]$ with 
\[
\sigma_p = 0.6 \left(
\frac{j}{80}
\right)^2, ~ p = 0, 1,\dotsc, 80
\]
over five trials for each parameter pair $(n,D)$. 

\subsection*{Optimal transport computation via discretization of marginal distributions}
We compute the empirical $W_2^2(k_\sigma *\hat{\mu}_n, \mu)$ using Python POT solver \texttt{ot.emd2} with $N_{\mathrm{eval}}  = 5000$ iid samples for each marginal distribution. 
Let $\widehat\mu_{N_{\mathrm{eval}}}$ be an independent
$N_{\mathrm{eval}}$-point empirical approximation of the target
measure $\mu$, and let $\widehat A_{N_{\mathrm{eval}}}$ be an
$N_{\mathrm{eval}}$-point empirical approximation of the generated
distribution $A$. The reported $\widehat W_2^2$ is the exact
discrete Kantorovich cost between these empirical measures,
computed using \texttt{ot.emd2}. By the triangle inequality,
\[
    \left|
    W_2(\widehat A_{N_{\mathrm{eval}}},
        \widehat\mu_{N_{\mathrm{eval}}})
    -W_2(A,\mu)
    \right|
    \leq
    W_2(\widehat\mu_{N_{\mathrm{eval}}},\mu)
    +W_2(\widehat A_{N_{\mathrm{eval}}},A).
\]
For $N_{\mathrm{eval}}=5000$, the nominal intrinsic sampling
scales $N_{\mathrm{eval}}^{-1/m}$ are approximately $0.058$
for $m=3$ and $0.119$ for $m=4$, before accounting for
distribution-dependent constants. These values provide scale
comparisons rather than numerical error bounds.

The following explains how we sample the marginal distributions for each independent trial.
\begin{itemize}
    \item 
To discretize $\mu$, if $M = \mathbb{S}^3$ (respectively, $\mathbb{S}^4$),  
we first sample $N_{\mathrm{eval}}$ iid standard Gaussian vectors in $\mathbb{R}^4$ (respectively, in $\mathbb{R}^5$) with covariance $\mathbf{I}_4$ (respectively, $\mathbf{I}_5$) and then normalize each to unit norm, yielding uniform samples on the sphere $M$.
For $M = \mathbb{S}^1\times \mathbb{S}^2$, we independently sample  
\[
\{ a_i\}_{i=1}^{N_{\mathrm{eval}}} \overset{iid}{\sim} 
\operatorname{Unif}(\mathbb{S}^1),\quad \{ b_i\}_{i=1}^{N_{\mathrm{eval}}} \overset{iid}{\sim} \operatorname{Unif}(\mathbb{S}^2)
\]
and use
$
\{ (a_i,b_i) \}_{i=1}^{N_{\mathrm{eval}}} \subset \mathbb{R}^5
$
as iid samples from $\mu = \operatorname{Unif}(M)$. We then embed these low-dimensional samples into $\mathbb{R}^D$ by zero-padding the remaining coordinates, denoting the resulting samples by $\{z_j\}_{j=1}^{N_{\mathrm{eval}}}\overset{iid}{\sim} \mu$. The discrete approximation of $\mu$ is then  
\[
\hat{\mu} = \frac{1}{N_{\mathrm{eval}}}\sum_{j=1}^{N_{\mathrm{eval}}} \delta_{z_j}. 
\]
\item
To discretize $k_\sigma *\hat{\mu}_n$ at different bandwidth $\sigma_p$, we first draw $n$ samples $\{x_i\}_{i=1}^{n} \overset{iid}{\sim} \mu$, embed them into $\mathbb{R}^D$ by zero-padding the remaining coordinates, and form $\hat{\mu}_n = \frac{1}{n}\sum_{i=1}^n \delta_{x_i}$.
For each empirical measure $\hat{\mu}_n$, we independently draw 
\[
\{i(j)\}_{j=1}^{N_{\mathrm{eval}}} \overset{iid}{\sim}\operatorname{Unif}\{ 1,2,\dotsc,n\}, \quad \{\eta_j\}_{j=1}^{N_{\mathrm{eval}}}\overset{iid}{\sim} \mathcal{N}(0, \textbf{I}_D) 
\]
and reuse these draws across all bandwidths $\sigma_p \in [0,0.6]$ to reduce Monte Carlo variability in the resulting $W_2^2$ curves. 
For each bandwidth $\sigma_p$, 
we approximate $k_{\sigma_p} * \hat{\mu}_n$ by
\[
\hat{\nu}_{\sigma_p} = \frac{1}{N_{\mathrm{eval}}}\sum_{j=1}^{N_{\mathrm{eval}}} \delta_{y_j^{(p)}}, \quad \text{where}~ y_j^{(p)} = x_{i(j)} + \sigma_p \eta_j, ~~\forall j=1,2,\dotsc,N_{\mathrm{eval}}.
\]

\end{itemize}
We then numerically approximate $W_2^2(k_\sigma * \hat{\mu}_n, \mu)$ by 
$
W_2^2( \hat{\nu}_{\sigma}, \hat{\mu})
$ for every $\sigma \in \{\sigma_0, \sigma_1,\dotsc, \sigma_{80} \}$,
computed using the solver \texttt{ot.emd2}.

\subsubsection{Impact of Ambient Dimension \texorpdfstring{$(D)$}{(D)}}
\label{sec:impact of ambient D}


To isolate the impact of ambient dimension $D$, we fix the sample size at $n = 50$ and vary $D$. We examine the mean $W_2^2(k_\sigma*\hat{\mu}_n, \mu)$ error curves as functions of $\sigma $ and the scaling of $\sigma^*_{\mathrm{amb}}$ with respect to $D$.

For each of the five trials, we independently draw an empirical measure $\hat{\mu}_n$ and an $N_{\mathrm{eval}}$-point discretization of $\mu$. Both point clouds are held fixed across all $D$ within each trial and embedded into $\mathbb{R}^D$ by zero-padding. For each $D$, we separately discretize $k_\sigma *\hat{\mu}_n$, following the procedure described above.

Let $C_{t,D}(\sigma)$ denote the computed $W_2^2(k_\sigma *\hat{\mu}_n, \mu)$ errors in trial $t$ at ambient dimension $D$.  
We define the empirical optimal bandwidth for the mean $W_2^2$ curve by 
\[
\hat{\sigma}_{\mathrm{amb}}(D) \in \underset{\sigma\in\{\sigma_0,\ldots,\sigma_{80}\}}{\arg\min}
\frac{1}{5}\sum_{t = 1}^5
C_{t,D}(\sigma)
\] as a numerical proxy for $\sigma^*_{\mathrm{amb}}(D)$. 

For each manifold tested, we use two collections of ambient dimensions, $L_1$ and $L_2$, for different purposes: $L_1$ illustrates how $W_2^2(k_{\sigma}*\hat{\mu}_n, \mu)$ varies with bandwidth $\sigma$, as well as how the optimal bandwidth and minimum $W_2^2$ error vary with $D$, whereas $L_2$ is used to estimate the slope of $\log(\sigma^*_{\mathrm{amb}})$ on $\log(D)$. Specifically,  
\[
L_1 := \{4, 6, 8, 10, 12 \} ,\quad \text{and} \quad L_2 := \{10\cdot 2^k: k = 0, 1,\dotsc, 6 \} .
\]
Our theory predicts that, for a fixed $n$, 
\[
\sigma^*_{\mathrm{amb}} \asymp \frac{1}{D} \iff \log( \sigma^*_{\mathrm{amb}}) = (-1)\cdot \log(D) + \mathcal{O}(1).
\]
Thus, we expect a slope of approximately $-1$ when regressing $\log(\sigma^*_{\mathrm{amb}}) $ on $\log(D)$.

  

First, for $D\in L_1$, we plot the mean curves of $W_2^2(k_{\sigma}*\hat{\mu}_n, \mu) $ versus $\sigma$, with $\pm 1$-standard deviation band among 5 trials. Although the experiments cover $\sigma \in [0, 0.6]$, we restrict the displayed curves to $\sigma \in [0, 0.2]$ or $[0, 0.3]$ for better visualization.
Figure~\ref{fig:5-1-1_dimension_combined} (top row) shows the results for the two MTW($K>0$) examples and the non-MTW($K>0$) example 
\footnote{For clarity, 
we display the mean $W_2^2(k_\sigma * \hat{\mu}_n, \mu)$ curves over $\sigma \in [0,0.20]$ for $\mathbb{S}^3$ and $\mathbb{S}^4$, and over $\sigma \in [0, 0.3]$ for $\mathbb{S}^1\times \mathbb{S}^2$. }. 
For comparison, reference dashed lines indicate the mean unsmoothed  errors $W_2^2(\hat{\mu}_n,\mu)$ and $W_{2,M}^2(\hat{\mu}_n,\mu)$. 
The results suggest that for a fixed $n$, increasing $D$ shifts the empirical optimal bandwidth toward zero while increasing the minimum achievable $W_2^2$ error. Thus, ambient smoothing becomes progressively less effective at reducing $W_2^2(k_\sigma *\hat{\mu}_n, \mu)$ as $D$ increases.

Second, for $D\in L_2$, we plot $\hat{\sigma}_{\mathrm{amb}}$ against $D$ on log-log axes and regress $\log(\hat{\sigma}_{\mathrm{amb}})$ on $\log(D)$. Figure~\ref{fig:5-1-1_dimension_combined} (bottom row) gives fitted slopes of $-1.0574$ for $\mathbb{S}^3$, $-1.0494$ for $\mathbb{S}^4$, and $-0.9711$ for $\mathbb{S}^1\times\mathbb{S}^2$. Each is within 6\% of the predicted slope $-1$.

\begin{figure}[t]
    \centering

    \begin{subfigure}[t]{0.32\textwidth}
        \centering
        \includegraphics[width=\linewidth]{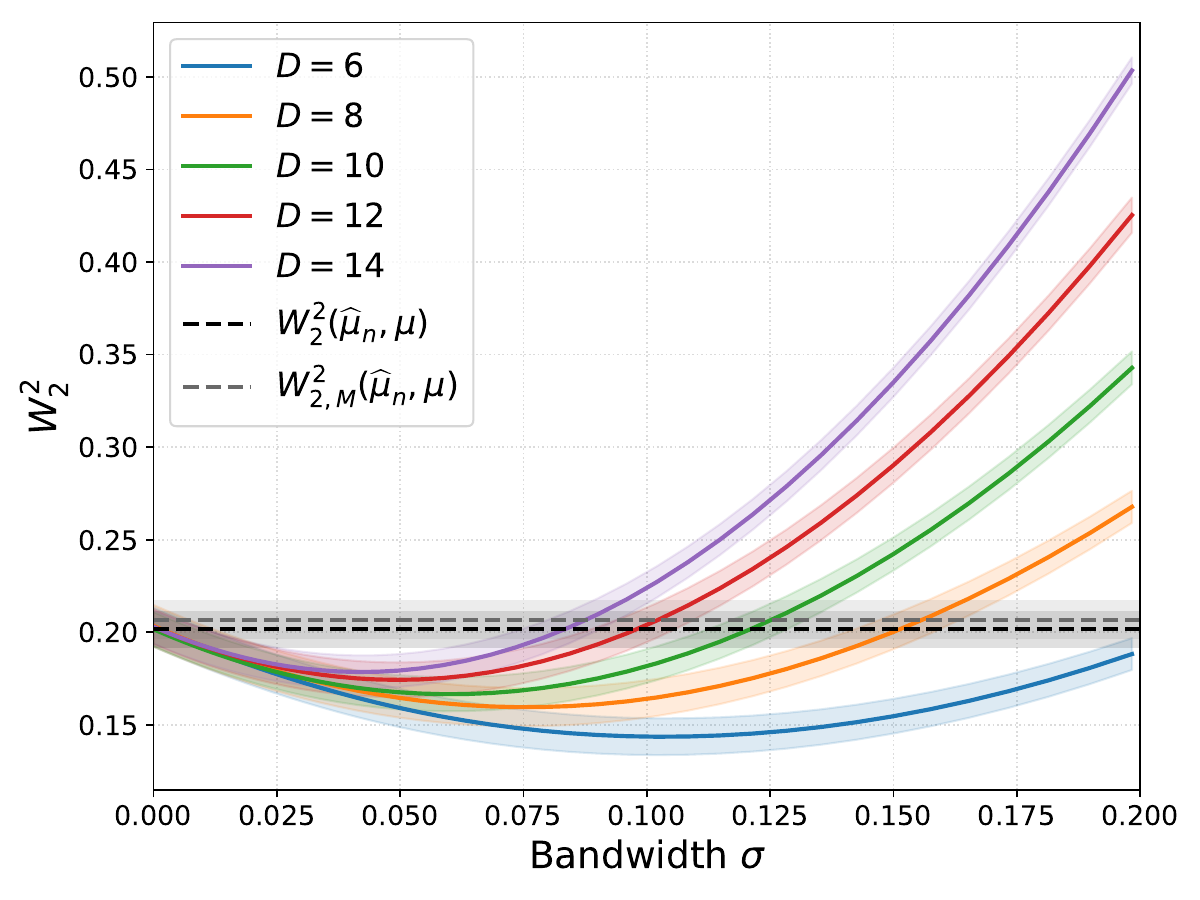}
        \caption{$\mathbb{S}^3$: empirical curves}
    \end{subfigure}
    \hfill
    \begin{subfigure}[t]{0.32\textwidth}
        \centering
        \includegraphics[width=\linewidth]{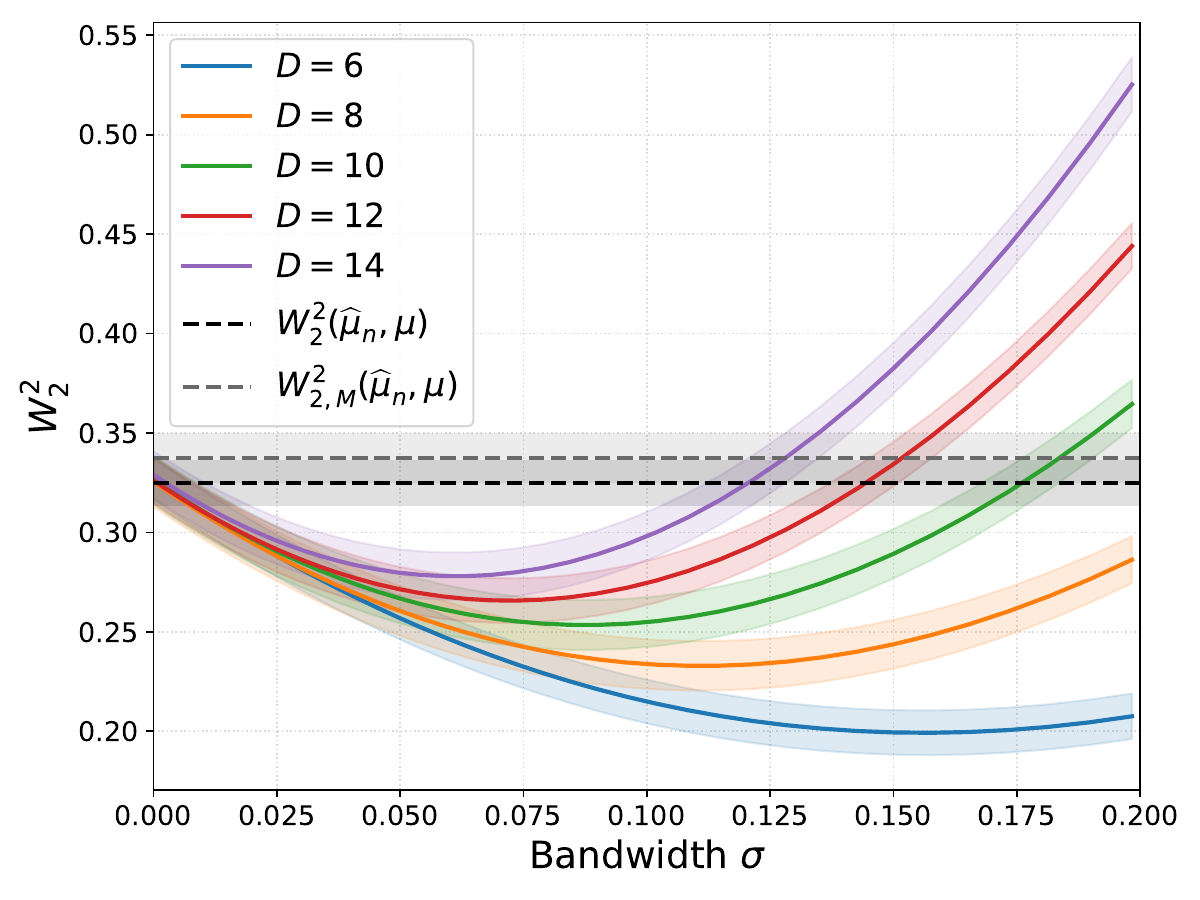}
        \caption{$\mathbb{S}^4$: empirical curves}
    \end{subfigure}
    \hfill
    \begin{subfigure}[t]{0.32\textwidth}
        \centering
        \includegraphics[width=\linewidth]{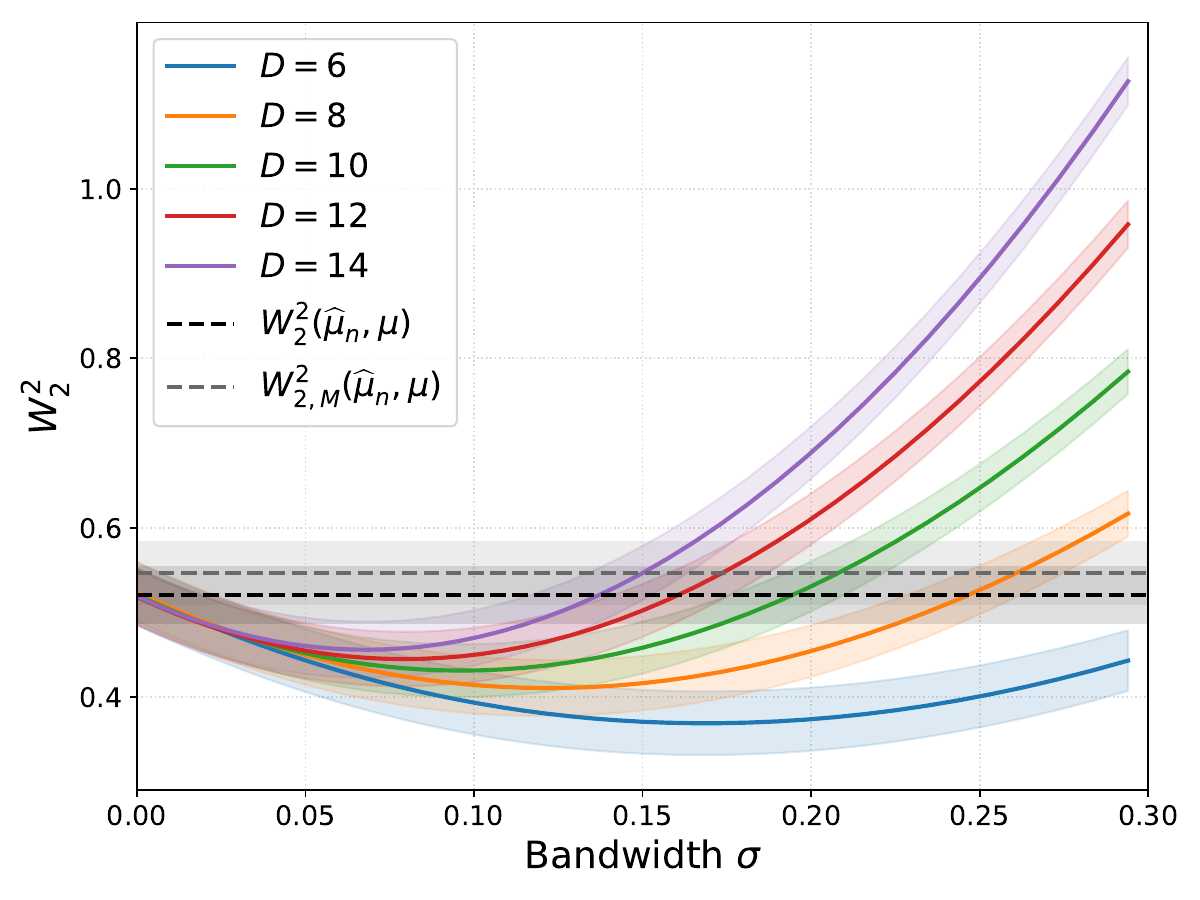}
        \caption{$\mathbb{S}^1\times\mathbb{S}^2$: empirical curves}
    \end{subfigure}

    \medskip

    \begin{subfigure}[t]{0.32\textwidth}
        \centering
        \includegraphics[width=\linewidth]{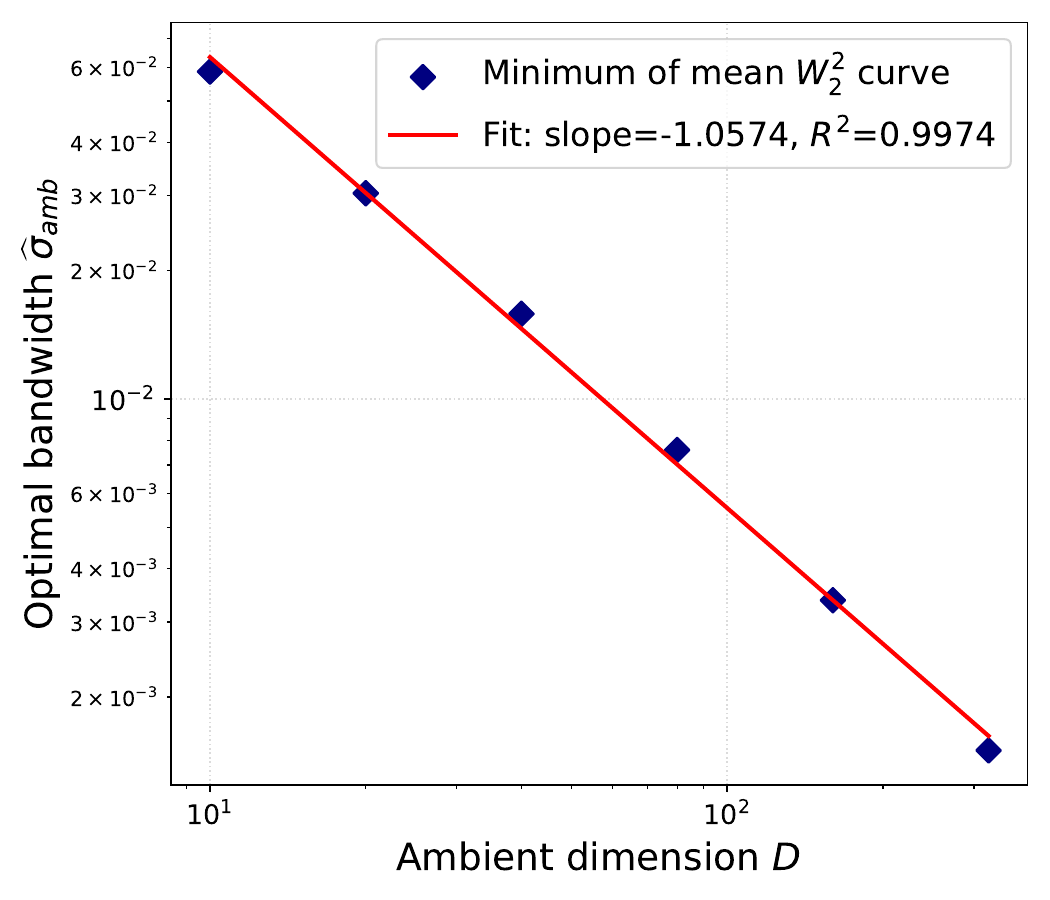}
        \caption{$\mathbb{S}^3$: fitted slope $-1.0574$}
    \end{subfigure}
    \hfill
    \begin{subfigure}[t]{0.32\textwidth}
        \centering
        \includegraphics[width=\linewidth]{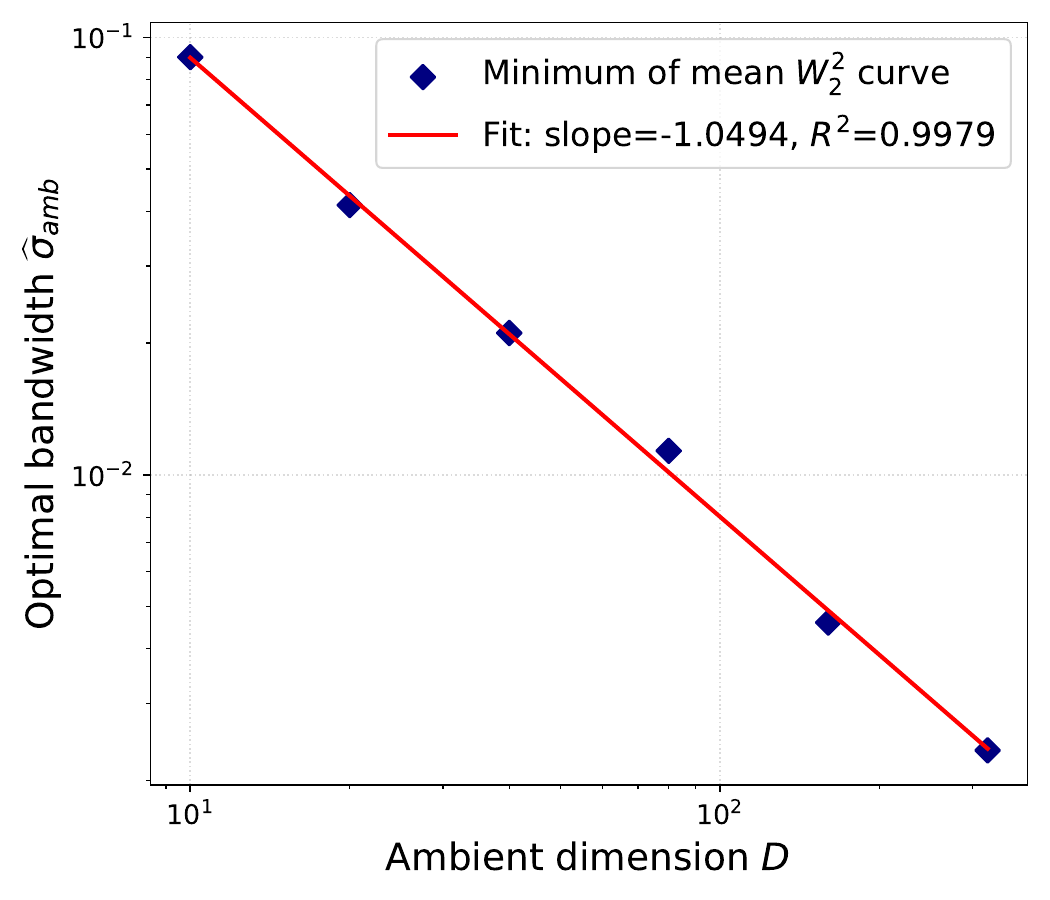}
        \caption{$\mathbb{S}^4$: fitted slope $-1.0494$}
    \end{subfigure}
    \hfill
    \begin{subfigure}[t]{0.32\textwidth}
        \centering
        \includegraphics[width=\linewidth]{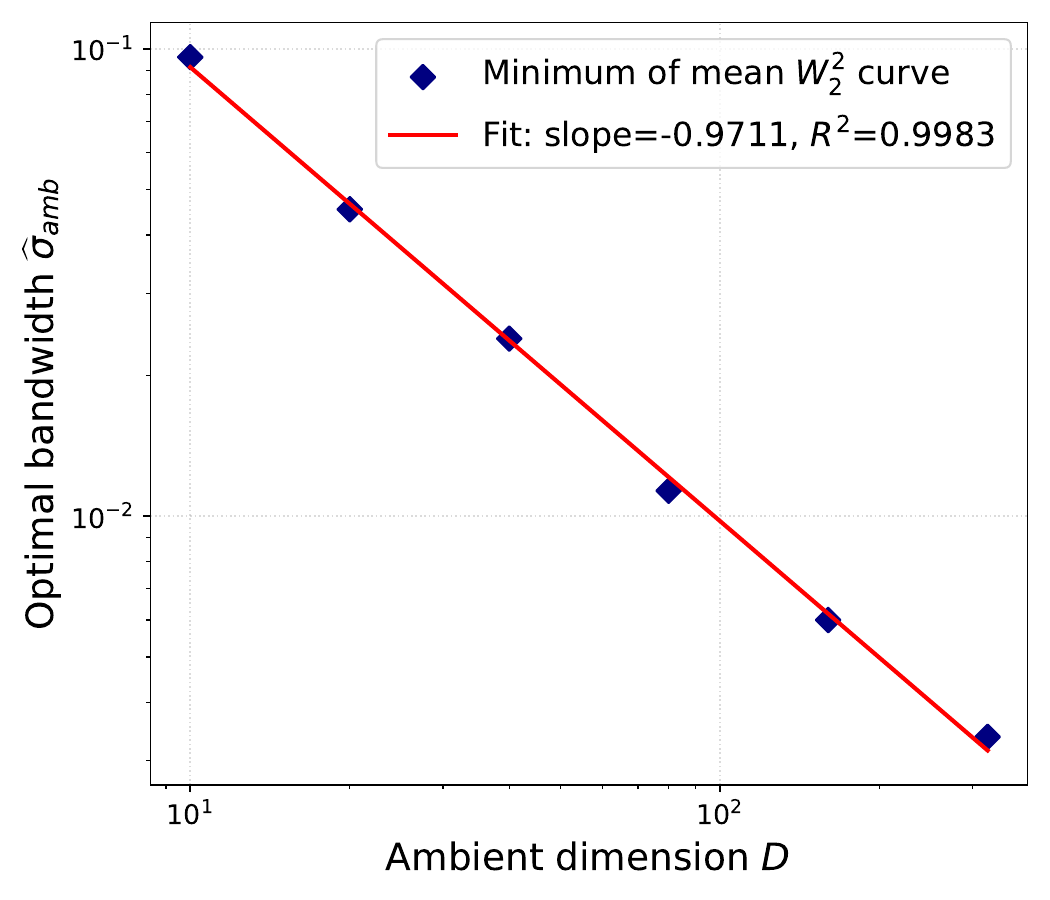}
        \caption{$\mathbb{S}^1\times\mathbb{S}^2$: fitted slope $-0.9711$}
    \end{subfigure}

    \caption{
    Dependence of ambient smoothing on the ambient dimension $D$ for the
    MTW($K>0$) manifolds $\mathbb{S}^3$ and $\mathbb{S}^4$ and the
    non-MTW($K>0$) product manifold
    $\mathbb{S}^1\times\mathbb{S}^2$.
    \textbf{Top row:} Mean empirical
    $W_2^2(k_\sigma*\hat{\mu}_n,\mu)$ as functions of
    $\sigma$, with $\pm1$ standard deviation bands over 5 trials.
    Colors indicate different ambient dimensions, and the horizontal
    reference lines show the corresponding unsmoothed errors.
    \textbf{Bottom row:} Log-log plots of the empirical bandwidth
    $\hat{\sigma}_{\mathrm{amb}}$ against
    $D\in\{10,20,40,80,160,320\}$, with linear regression fits in
    red. From left to right, the fitted slopes are $-1.0574$, $-1.0494$,
    and $-0.9711$, agreeing with the predicted scaling
    $\hat{\sigma}_{\mathrm{amb}}\asymp D^{-1}$.
    }
    \label{fig:5-1-1_dimension_combined}
\end{figure}


\subsubsection{Impact of Sample Size \texorpdfstring{$(n)$}{(n)}}
\label{sec:impact of sample size n}

    

To isolate the effect of sample size $n$, we fix the ambient dimension at $D = 15$ and vary $n$. We examine the mean $W_2^2(k_\sigma *\hat{\mu}_n, \mu)$ error curves as functions of $\sigma$ and the scaling of $\sigma^*_{\mathrm{amb}}$ with $n$. 

For each of the five trials and each sample size $n$, we independently draw a new empirical measure $\hat{\mu}_n$ and an $N_{\mathrm{eval}}$-point discretization of $\mu$. We then discretize $k_\sigma * \hat{\mu}_n$ accordingly. 

Let $R_{t,n}(\sigma)$ denote the computed $W_2^2(k_{\sigma}*\hat{\mu}_n, \mu)$ errors at the trial $t$. 
We define the corresponding optimal bandwidth for the mean $W_2^2(k_\sigma *\hat{\mu}_n, \mu)$ curves by 
\[
\hat{\sigma}_{\mathrm{amb}}(n) := \arg\min_{ \sigma_p \in \{\sigma_0,\sigma_1,\dotsc, \sigma_{80} \}
} \frac{1}{5}\sum_{t=1}^5 R_{t,n}(\sigma)
\] as a numerical proxy for $\sigma_{\mathrm{amb}}^*(n)$.
Our theory predicts that, for a fixed $D$, 
\[
\sigma^*_{\mathrm{amb}} \asymp n^{-1/m} \iff \log( \sigma^*_{\mathrm{amb}}) = \left(
-\frac{1}{m}\right)\cdot \log(n) + \mathcal{O}(1).
\]
Thus, we expect an approximate slope of $-1/m$ when regressing $\log(\sigma^*_{\mathrm{amb}})$ on $\log(n)$. To test this prediction, we consider
$n \in \{5,10,20,40,80\}$.

First, we plot the mean $W_2^2(k_\sigma*\hat{\mu}_n,\mu)$ against $\sigma$, with $\pm1$ standard deviation bands. Figure~\ref{fig:5-1-2_sample_size_combined} (top row) shows the results for all three manifolds \footnote{For visualization, we display the mean $W_2^2(k_\sigma *\hat{\mu}_n , \mu)$ curves over $\sigma \in [0,0.3]$ in Figure \ref{fig:5-1-2_sample_size_combined} (top row).}. 
For comparison, the plots include reference dashed lines indicating the mean unsmoothed $W_2^2(\hat{\mu}_n,\mu)$ and $W_{2,M}(\hat{\mu}_n,\mu)$ errors. Colors distinguish different sample sizes $n$. 
The results suggest that, increasing $n$ flattens the initial decrease in $W_2^2(k_\sigma*\hat{\mu}_n,\mu)$ and shifts the empirical optimal bandwidth toward zero.

Second, we examine the scaling of $\hat{\sigma}_{\mathrm{amb}}$ on $n$ by regressing $\log(\hat{\sigma}_{\mathrm{amb}})$ on $\log(n)$. Figure~\ref{fig:5-1-2_sample_size_combined} (bottom row) gives fitted slopes of $-0.3361$ for $\mathbb{S}^3$, $-0.2591$ for $\mathbb{S}^4$, and $-0.3317$ for $\mathbb{S}^1\times\mathbb{S}^2$. Each is within 4\% of its predicted value $-1/m$.

\begin{figure}[t]
    \centering
\renewcommand{\legenditem}[2]{%
    \tikz[baseline=-0.5ex]{
        \draw[#1, line width=1.0pt] (0,0) -- (0.58,0);
    }%
    \hspace{0.2em}#2%
}

\begingroup
\setlength{\fboxsep}{2pt}
\setlength{\fboxrule}{0pt}
\setlength{\tabcolsep}{4pt}
\fcolorbox{black!25}{white}{%
    \footnotesize
    \begin{tabular}{@{}ccccccc@{}}
         \legenditem{mplblue}{$n=5$} &
        \legenditem{mplorange}{$n=10$} &
        \legenditem{mplgreen}{$n=20$} &
        \legenditem{mplred}{$n=40$} &
        \legenditem{mplpurple}{$n=80$} &
        \legenditem{black,dashed}{$W_2^2(\hat{\mu}_n,\mu)$} &
        \legenditem{black,dotted}{$W_{2,M}^2(\hat{\mu}_n,\mu)$}
    \end{tabular}%
}
       
\endgroup

\par\vspace{0.4em}
    \begin{subfigure}[t]{0.32\textwidth}
        \centering
        \includegraphics[width=\linewidth]{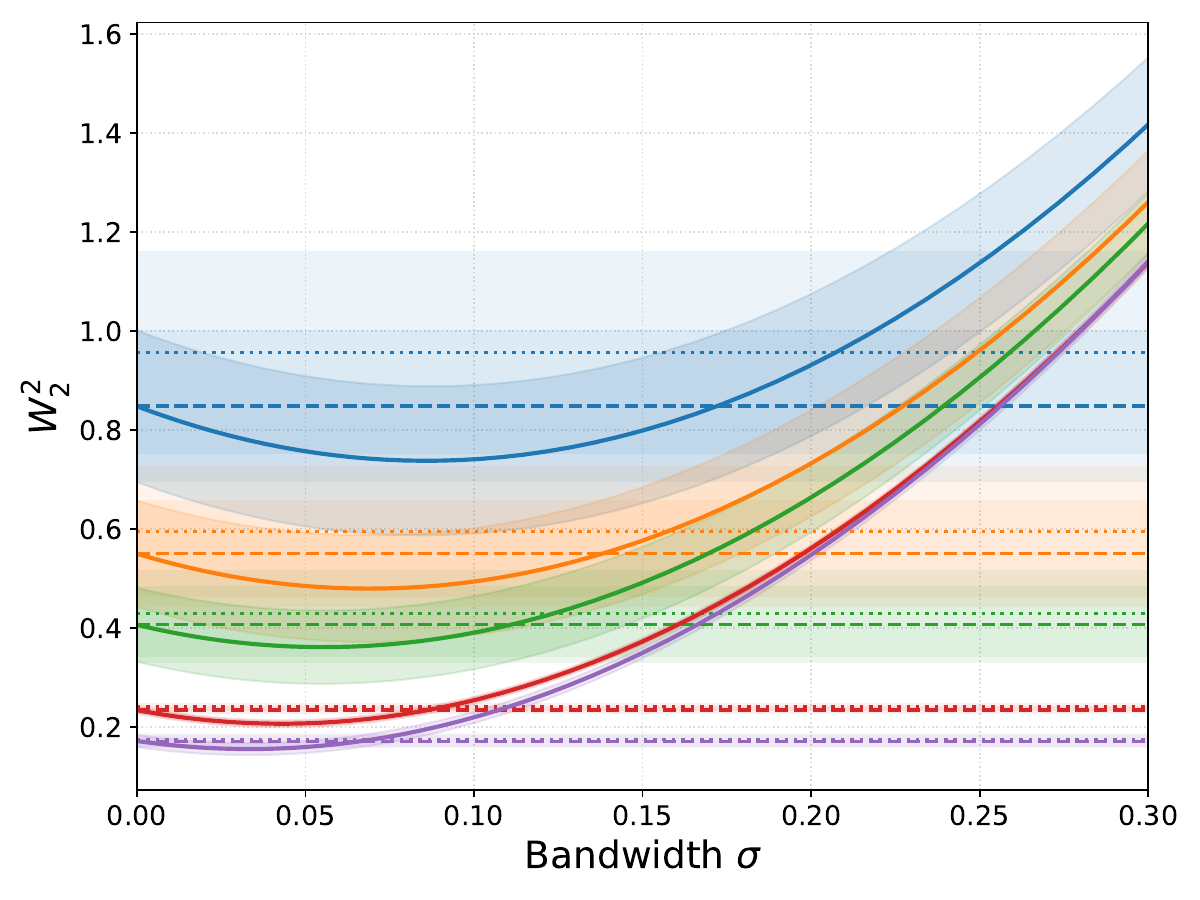}
        \caption{$\mathbb{S}^3$: empirical curves}
    \end{subfigure}
    \hfill
    \begin{subfigure}[t]{0.32\textwidth}
        \centering
        \includegraphics[width=\linewidth]{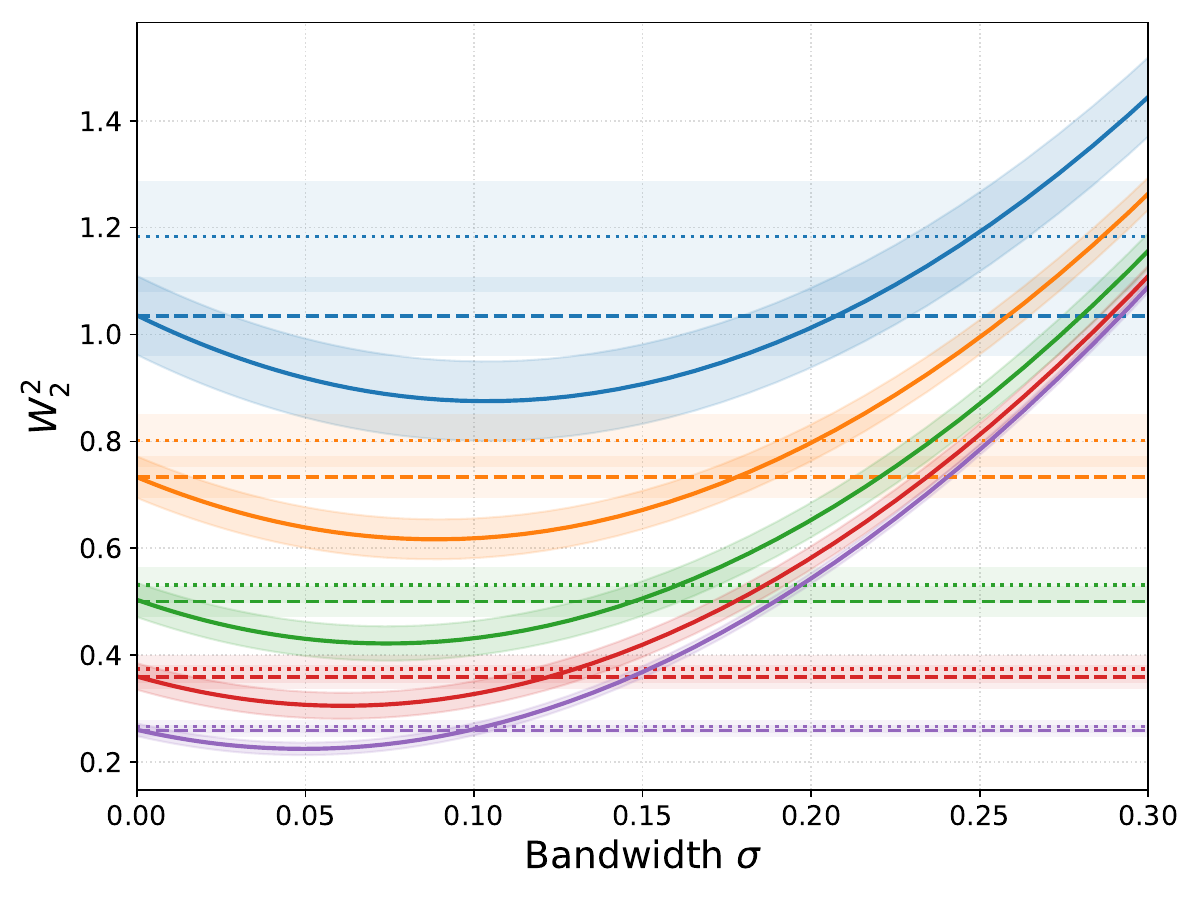}
        \caption{$\mathbb{S}^4$: empirical curves}
    \end{subfigure}
    \hfill
    \begin{subfigure}[t]{0.32\textwidth}
        \centering
        \includegraphics[width=\linewidth]{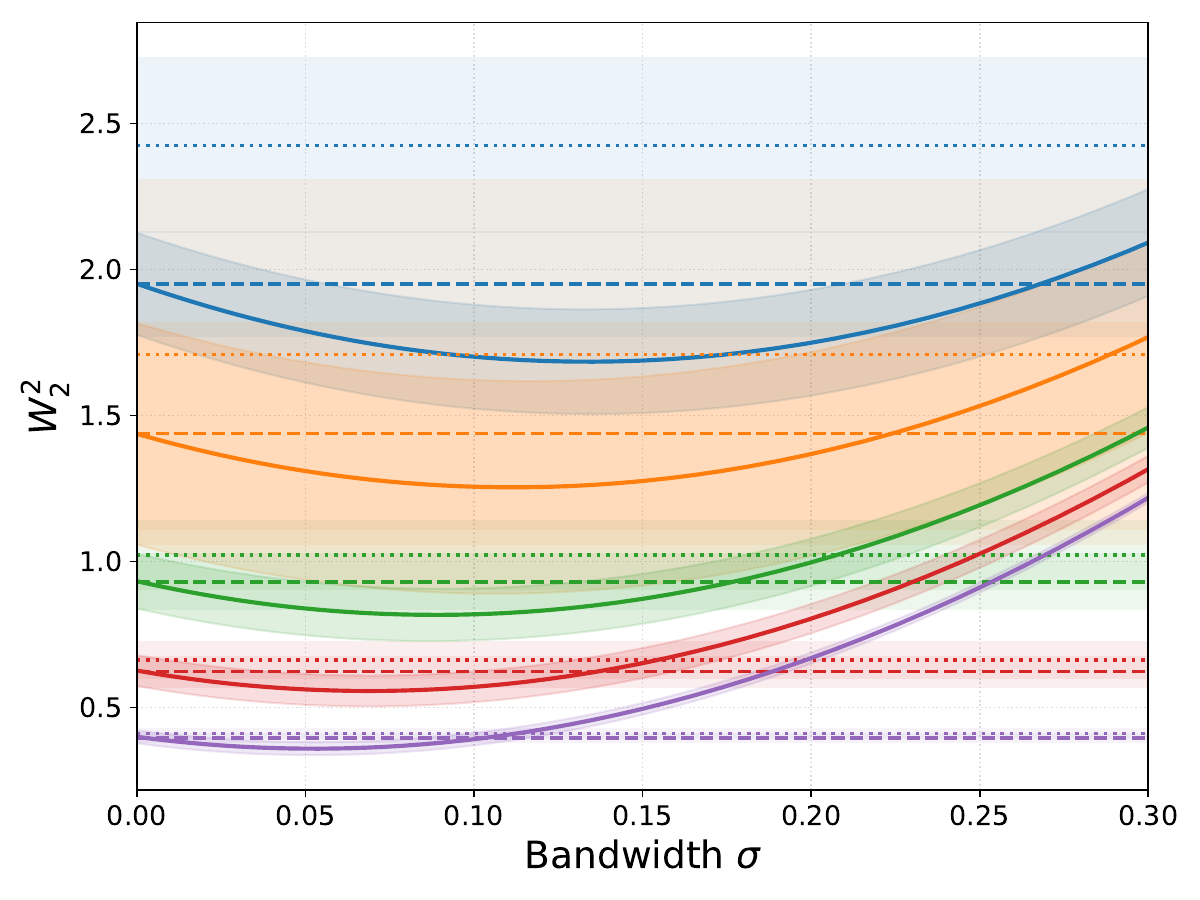}
        \caption{$\mathbb{S}^1\times\mathbb{S}^2$: empirical curves}
    \end{subfigure}

    \medskip

    \begin{subfigure}[t]{0.32\textwidth}
        \centering
        \includegraphics[width=\linewidth]{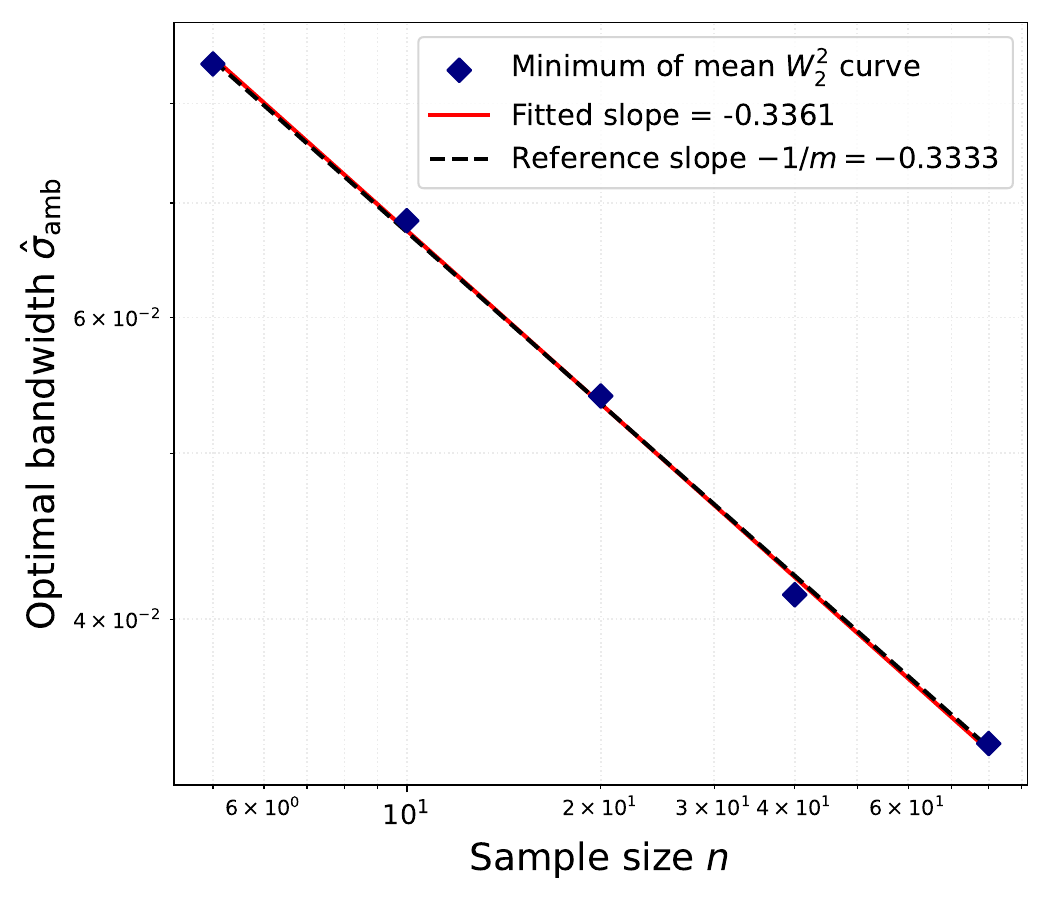}
        \caption{$\mathbb{S}^3$: fitted slope $-0.3361$}
    \end{subfigure}
    \hfill
    \begin{subfigure}[t]{0.32\textwidth}
        \centering
        \includegraphics[width=\linewidth]{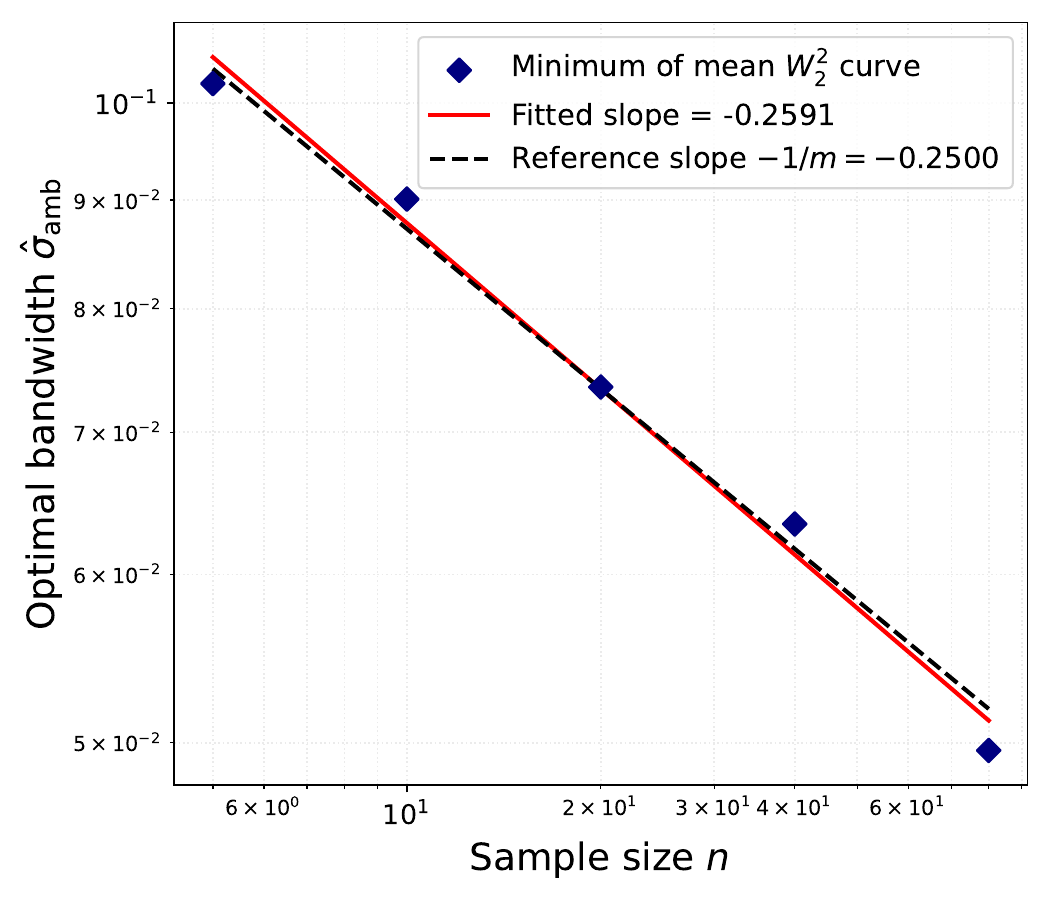}
        \caption{$\mathbb{S}^4$: fitted slope $-0.2591$}
    \end{subfigure}
    \hfill
    \begin{subfigure}[t]{0.32\textwidth}
        \centering
        \includegraphics[width=\linewidth]{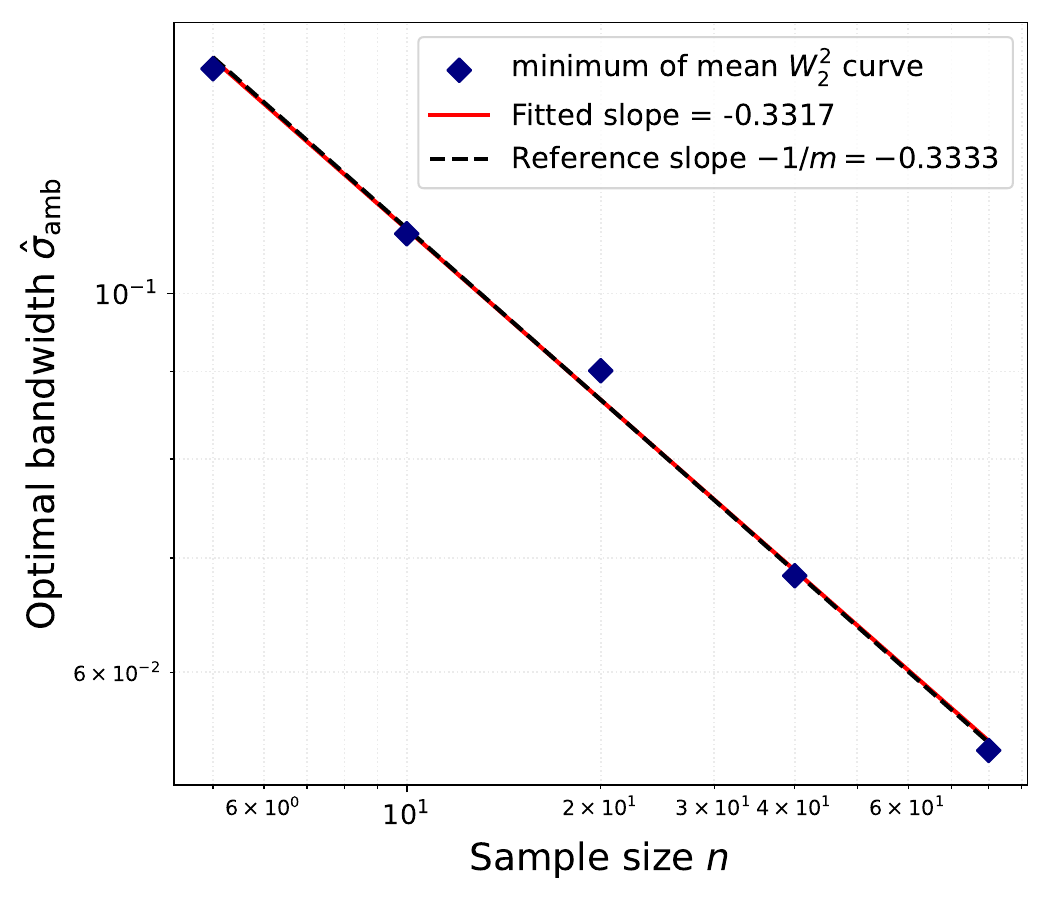}
        \caption{$\mathbb{S}^1\times\mathbb{S}^2$: fitted slope $-0.3317$}
    \end{subfigure}

    \caption{
    Dependence of ambient smoothing on the empirical sample size $n$ at
    fixed ambient dimension $D=15$. Columns correspond, from left to
    right, to the MTW($K>0$) manifolds $\mathbb{S}^3$ and
    $\mathbb{S}^4$ and the non-MTW($K>0$) product manifold
    $\mathbb{S}^1\times\mathbb{S}^2$.
    \textbf{Top row:} Mean empirical
    $W_2^2(k_\sigma*\hat{\mu}_n,\mu)$ as functions of 
    $\sigma$, with $\pm1$ standard deviation bands over five trials and
    colors indicating different sample sizes. The dashed horizontal lines indicates the unsmoothed errors $W_2^2(\hat{\mu}_n,\mu)$ and $W_{2,M}^2(\hat{\mu}_n,\mu)$.
    \textbf{Bottom row:} Log-log plots of the empirical bandwidth
    $\hat{\sigma}_{\mathrm{amb}}$ against $n$. Red lines show the fitted
    linear regressions, and the reference lines have the predicted
    slope $-1/m$. The fitted slopes are $-0.3361$, $-0.2591$, and
    $-0.3317$, respectively, consistent with
    $\hat{\sigma}_{\mathrm{amb}}\asymp n^{-1/m}$.
    }
    \label{fig:5-1-2_sample_size_combined}
\end{figure}

\subsection{Latent Smoothing and Encoder Quality Thresholds (Theorem~\ref{thm:latent_decomposition})}\label{sec:latent_smoothing_experiment}

The experiments in this subsection isolate the four local quantities in
\Cref{thm:latent_decomposition}: the tangential gain
$\rho_{\parallel}$, the orthogonal decoder response
$L_{\mathrm{orth}}$, the number $d-m$ of latent directions orthogonal to
the encoded tangent space, and the reconstruction error $\delta$.  Across
both test manifolds, the numerical results show the same basic behavior.
The preferred bandwidth
increases with $\rho_{\parallel}$ and decreases when either
$L_{\mathrm{orth}}$ or $d-m$ increases.

These comparisons use explicit global maps $f_d$ and $g$, rather than
inserting the theorem parameters directly into a sampling formula.  Their
local Jacobians can therefore be calculated analytically and checked by
finite differences.  The complete Wasserstein curves, including an ambient
smoothing reference and a magnified view of the small-bandwidth regime, are
reported in \Cref{app:latent_smoothing_diagnostics}.

\subsubsection{Manifolds and target distributions}
\label{subsec:latent_smoothing_design}

We use the unit sphere $\mathbb S^2\subset\mathbb R^3$ and the product
$\mathbb S^1\times\mathbb S^2\subset\mathbb R^2\times\mathbb R^3$.
Both are embedded in the same ambient space $\mathbb R^{100}$ by appending
zero coordinates.  Hence the intrinsic dimensions are $m=2$ and $m=3$,
respectively, while the ambient dimension is fixed at $D=100$.

On $\mathbb S^2$, we sample from a two-cluster density.  For a unit vector
$q\in\mathbb S^2$ and $\kappa>0$, let
\[
    p_{q,\kappa}(x)
    :=
    \frac{\exp(\kappa\langle q,x\rangle)}
    {\int_{\mathbb S^2}\exp(\kappa\langle q,u\rangle)\,d\vol(u)}.
\]
We take
\[
    \mu_{\mathbb S^2}
    =
    0.62\,p_{q_1,8}+0.38\,p_{q_2,11},
    \qquad
    q_1=(0,0,1),
    \qquad
    q_2=
    \frac{(0.88,0.15,-0.45)}{\|(0.88,0.15,-0.45)\|}.
\]
Thus the two components are centered at different points of the sphere and
have different concentrations.

For the circle factor, write a point as $(\cos\theta,\sin\theta)$ and set
\[
    p_{\theta_0,\kappa}(\theta)
    :=
    \frac{\exp\{\kappa\cos(\theta-\theta_0)\}}
    {\int_0^{2\pi}\exp\{\kappa\cos(s-\theta_0)\}\,ds}.
\]
We use
\[
    \mu_{\mathbb S^1}
    =
    0.58\,p_{0.25,6}+0.42\,p_{3.65,9},
    \qquad
    \mu_{\mathrm{prod}}
    =
    \mu_{\mathbb S^1}\otimes\mu_{\mathbb S^2}.
\]
The round sphere supplies the positive-curvature example underlying the
local mechanism in the theorem.  As discussed in
\Cref{rmk:round-sphere}, the squared-distance cost is used away from the
antipodal cut locus.  The product $\mathbb S^1\times\mathbb S^2$ has flat
mixed planes and is included as an out-of-regime stress test, not as an
example satisfying the strict $\mathrm{MTW}(K>0)$ hypothesis.

\subsubsection{Explicit encoder and decoder}
\label{subsec:explicit_encoder_decoder}

For each run, we draw $n=50$ independent anchors
$x_1,\ldots,x_n\sim\mu$ and form
$\hat\mu_n=n^{-1}\sum_{i=1}^n\delta_{x_i}$.  Let $d_0=3$ on
$\mathbb S^2$ and $d_0=5$ on
$\mathbb S^1\times\mathbb S^2$.  For any $d_0\leq d\leq D$, the encoder
is the coordinate projection
\begin{equation}
\label{eq:explicit_coordinate_encoder}
    f_d(x_1,\ldots,x_D)
    =
    (x_1,\ldots,x_d).
\end{equation}
Because the final $D-d_0$ coordinates of the embedded manifolds vanish,
$J_{f_d}$ preserves every tangent-vector norm exactly.

Let $E_i\in\mathbb R^{D\times m}$ contain an orthonormal basis of
$T_{x_i}M$, and let $\widetilde E_i\in\mathbb R^{d\times m}$ be obtained
by retaining its first $d$ rows.  Then
$\widetilde E_i^{\top}\widetilde E_i=I_m$.  Define
\[
    c_i=f_d(x_i),
    \qquad
    P_i=\widetilde E_i\widetilde E_i^{\top},
\]
so $P_i$ projects onto the encoded tangent space.  On $\mathbb S^2$, the
columns of $E_i$ are obtained by normalizing $x_i\times r_i$ and then
taking its cross product with $x_i$.  We use $r_i=(1,0,0)$ unless this is
nearly parallel to $x_i$, in which case we use $r_i=(0,1,0)$.  On the
product manifold, we add the circle tangent
$(-x_{i,2},x_{i,1})$ to the two sphere tangents.

Let $\iota_d:\mathbb R^d\to\mathbb R^D$ be coordinate inclusion and let
$I_d$ denote the identity on $\mathbb R^d$.  The local
decoder centered at $c_i$ is
\begin{equation}
\label{eq:explicit_local_decoder}
    h_i(z)
    :=
    \exp_{x_i}^{M}\!\left(
        \rho_{\parallel}E_i\widetilde E_i^{\top}(z-c_i)
    \right)
    +
    L_{\mathrm{orth}}\iota_d(I_d-P_i)(z-c_i).
\end{equation}
For $M=\mathbb S^1\times\mathbb S^2$, the exponential map is applied
separately to the two factors.  To join the local maps, let $P_{d_0}$
denote projection onto the first $d_0$ coordinates and define the cardinal
weights
\begin{equation}
\label{eq:cardinal_shepard_weights}
    w_i(z)
    :=
    \frac{
        \prod_{k\neq i}\|P_{d_0}(z-c_k)\|^4
    }{
        \sum_{j=1}^n
        \prod_{k\neq j}\|P_{d_0}(z-c_k)\|^4
    }.
\end{equation}
For distinct anchors, these weights satisfy
$w_i(c_i)=1$, $w_j(c_i)=0$ for $j\neq i$, and
$Dw_j(c_i)=0$ for every $j$.  Finally, choose a fixed unit vector $b$ and
set
\begin{equation}
\label{eq:explicit_global_decoder}
    g(z)
    :=
    \sum_{i=1}^n w_i(z)h_i(z)+\delta b.
\end{equation}
In the implementation, $b$ is the unit vector opposite to the population
mean.  This fixes one reproducible direction for the reconstruction
displacement without changing the decoder Jacobian.

\begin{proposition}[Prescribed local decoder responses]
\label{prop:explicit_decoder_responses}
The maps in \eqref{eq:explicit_coordinate_encoder} and
\eqref{eq:explicit_global_decoder} satisfy, at every empirical anchor,
\[
    \|g(f_d(x_i))-x_i\|=\delta,
\]
and
\[
    J_g(c_i)
    =
    \rho_{\parallel}E_i\widetilde E_i^{\top}
    +
    L_{\mathrm{orth}}\iota_d(I_d-P_i).
\]
Consequently, for every $v\in T_{x_i}M$ and
$v^\perp\in\operatorname{range}(P_i)^\perp$,
\[
    J_g(f_d(x_i))J_{f_d}(x_i)v
    =
    \rho_{\parallel}v,
    \qquad
    \|J_g(f_d(x_i))v^\perp\|
    =
    L_{\mathrm{orth}}\|v^\perp\|.
\]
Thus the construction realizes the tangential gain, reconstruction error,
orthogonal response, and latent complement dimension appearing in
\Cref{thm:latent_decomposition}, with zero tangential isometry defect.
\end{proposition}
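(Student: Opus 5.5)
The plan is a direct evaluation at the anchors, using the interpolation properties of the cardinal weights \eqref{eq:cardinal_shepard_weights}. Write $\Pi_i(z):=\prod_{k\neq i}\|P_{d_0}(z-c_k)\|^4$, so that $w_i=\Pi_i/\sum_j\Pi_j$. Since the anchors are distinct points of the embedded manifold whose trailing coordinates vanish, the vectors $P_{d_0}c_k$ are distinct. Hence $\Pi_i(c_i)>0$, while $\Pi_j$ vanishes at $c_i$ for $j\neq i$, because it contains the factor $\|P_{d_0}(z-c_i)\|^4$.

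This factor vanishes to fourth order at $c_i$. Therefore $\Pi_j(c_i)=0$ and $D\Pi_j(c_i)=0$ for $j\neq i$, and the denominator is positive and smooth near $c_i$. The quotient rule then gives $w_i(c_i)=1$, $w_j(c_i)=0$, and $Dw_j(c_i)=0$ for all $j$. For $j=i$ this follows from $\sum_j w_j\equiv 1$: the $w_j$ with $j\neq i$ have zero derivative, so $Dw_i(c_i)=0$ as well. I will verify these claims first, since they are asserted but not proved in the text.

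Next I evaluate $g$ at $c_i=f_d(x_i)$. Because $h_i(c_i)=\exp^M_{x_i}(0)+0=x_i$, we get $g(c_i)=x_i+\delta b$, and therefore $\|g(f_d(x_i))-x_i\|=\delta\|b\|=\delta$.

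For the Jacobian, I differentiate the sum using the product rule:
\[
J_g(c_i)=\sum_j\Bigl(h_j(c_i)\,Dw_j(c_i)+w_j(c_i)\,J_{h_j}(c_i)\Bigr)=J_{h_i}(c_i),
\]
since every $Dw_j(c_i)=0$ and $w_j(c_i)=\delta_{ij}$. The terms with $j\neq i$ drop out even though $h_j(c_i)$ is nonzero. Two smoothness points must hold at $c_i$. The exponential map $\exp_{x_j}$ is smooth on the relevant ball; on the sphere it is globally smooth on the tangent plane, and on the product it is smooth factorwise. Also $h_j$ only needs to be finite at $c_i$.

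Then I compute $J_{h_i}(c_i)$ with the chain rule and $d(\exp_{x_i})_0=\mathrm{id}_{T_{x_i}M}$, regarded through the inclusion into $\mathbb R^D$. This gives
\[
J_{h_i}(c_i)=\rho_\parallel E_i\widetilde E_i^\top+L_{\mathrm{orth}}\iota_d(I_d-P_i).
\]

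For the consequences, take $v\in T_{x_i}M$ and write $v=E_ia$. Since the trailing rows of $E_i$ vanish (all tangent vectors live in the first $d_0\le d$ coordinates), $J_{f_d}(x_i)v=\widetilde E_ia$. Using $\widetilde E_i^\top\widetilde E_i=I_m$, the first term gives $\rho_\parallel E_ia=\rho_\parallel v$. The second term vanishes because $(I_d-P_i)\widetilde E_i=0$.

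For $v^\perp\perp\operatorname{range}(P_i)$, we have $\widetilde E_i^\top v^\perp=0$ and $(I_d-P_i)v^\perp=v^\perp$. Since $\iota_d$ is an isometry, $\|J_gv^\perp\|=L_{\mathrm{orth}}\|v^\perp\|$.

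Finally, the encoder restricted to the tangent space, $U_i=J_{f_d}|_{T_{x_i}M}$, is an isometry onto $V_i=\operatorname{range}(P_i)$, and $A_i=\rho_\parallel\iota_i$. Taking $O=\mathrm{id}$ in the definition of $\varepsilon_{\mathrm{iso}}$ then yields $\varepsilon_{\mathrm{iso}}=0$.

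The main obstacle is the derivative step: justifying that $Dw_j(c_i)=0$ for all $j$, including $j=i$, and that no cross terms from $h_j$ with $j\neq i$ survive. I will handle this through the fourth-order vanishing of the product factors together with the partition-of-unity identity, as described above.
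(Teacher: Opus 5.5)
Your proposal is correct and follows the same route as the paper's proof: cardinality of the weights gives $g(c_i)=x_i+\delta b$, the vanishing of every $Dw_j(c_i)$ reduces $J_g(c_i)$ to $J_{h_i}(c_i)$, and $d(\exp^M_{x_i})_0=I_{T_{x_i}M}$ together with $\widetilde E_i^{\top}\widetilde E_i=I_m$ and the projection property of $P_i$ yields the response identities; you additionally verify the weight properties that the paper only asserts. One small precision: for $j\neq i$, $D(w_jh_j)(c_i)=0$ holds because $h_j$ is bounded near $c_i$, which your smoothness remark guarantees, and not merely because $h_j(c_i)$ is finite.
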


\begin{proof}
At $z=c_i$, cardinality of the weights gives
$g(c_i)=h_i(c_i)+\delta b=x_i+\delta b$.  Their first derivatives vanish
at the centers, so differentiating only $h_i$ and using
$d(\exp_{x_i}^{M})_0=I_{T_{x_i}M}$ gives the displayed Jacobian.  Since
$\widetilde E_i^{\top}\widetilde E_i=I_m$ and $P_i$ is an orthogonal
projection, the two response identities follow.
\end{proof}

\subsubsection{Numerical protocol}
\label{subsec:latent_numerical_protocol}

For each manifold, we perform five independent runs.  Each run draws a new
set of $n=50$ anchors, an independent target sample of size
$N_{\mathrm{OT}}=500$, and new Gaussian coordinates.  For a given run,
the same anchors, target sample, empirical-mixture indices, and Gaussian
coordinates are reused across all values in a parameter sweep.  This
common-random-number construction reduces the variability of comparisons
within a run without treating repeated evaluations of one empirical measure
as independent data.

For every $\sigma$, we generate $N_{\mathrm{OT}}$ points of the form
\[
    g\bigl(f_d(x_{I_j})+\sigma\xi_j\bigr),
    \qquad
    I_j\sim\operatorname{Unif}\{1,\ldots,n\},
    \qquad
    \xi_j\sim\mathcal N(0,I_d).
\]
The bandwidth grid is
\[
    \sigma_j
    =
    0.1\left(\frac{j}{80}\right)^2,
    \qquad j=0,\ldots,80,
\]
which places more grid points near zero. We select the
bandwidth grid point with the smallest computed transport
cost as the estimated optimal bandwidth.

Optimal-bandwidth plots show means over the five independent runs, with
$95\%$ Student-$t$ confidence intervals.
Normalized optimal-bandwidth comparisons use ratios of mean optima, with
paired first-order ratio intervals.  This avoids dividing by an individual
run's reference optimum when its estimated minimum occurs at $\sigma=0$.
The decoder identities in \Cref{prop:explicit_decoder_responses} are checked
by central finite differences at ten anchors in every run.

\subsubsection{Tangential gain}
\label{subsec:latent_smoothing_rho}

We vary
\[
    \rho_{\parallel}\in\{0.8,0.9,1.0,1.1,1.2\}
\]
while fixing $\delta=0$, $L_{\mathrm{orth}}=1$, and $d-m=80$.  Thus
$L_{\mathrm{orth}}^2(d-m)=80$ is deliberately large relative to the
changing tangential terms in the quadratic coefficient.  In this regime,
the local quadratic minimizer suggested by
\Cref{thm:latent_decomposition} has the dominant dependence
\begin{equation}
\label{eq:rho_dominant_prediction}
    \frac{\sigma_{\mathrm{quad}}^*(\rho_{\parallel})}
    {\sigma_{\mathrm{quad}}^*(1)}
    \approx
    \rho_{\parallel}.
\end{equation}

\Cref{fig:latent_smoothing_rho_optima} compares the observed normalized
minimizers with the slope-one line in
\eqref{eq:rho_dominant_prediction}.  The observed points and confidence
intervals are plotted first; the theoretical line is drawn last so it
remains visible.  The figure also reports the slope and confidence interval
of an empirical line constrained to pass through $(1,1)$.  For transparency,
we include the more general descriptive curve
\[
    \frac{\sigma_{\mathrm{quad}}^*(\rho_{\parallel})}
    {\sigma_{\mathrm{quad}}^*(1)}
    =
    \frac{\rho_{\parallel}(1+\beta)}
    {\rho_{\parallel}^2+\beta},
\]
where the nuisance ratio $\beta\geq0$ is fitted.  This fitted curve is not
presented as an independent theoretical prediction.

\begin{figure}[t]
\centering
\begin{minipage}{0.49\textwidth}
    \centering
    \includegraphics[width=\linewidth]
    {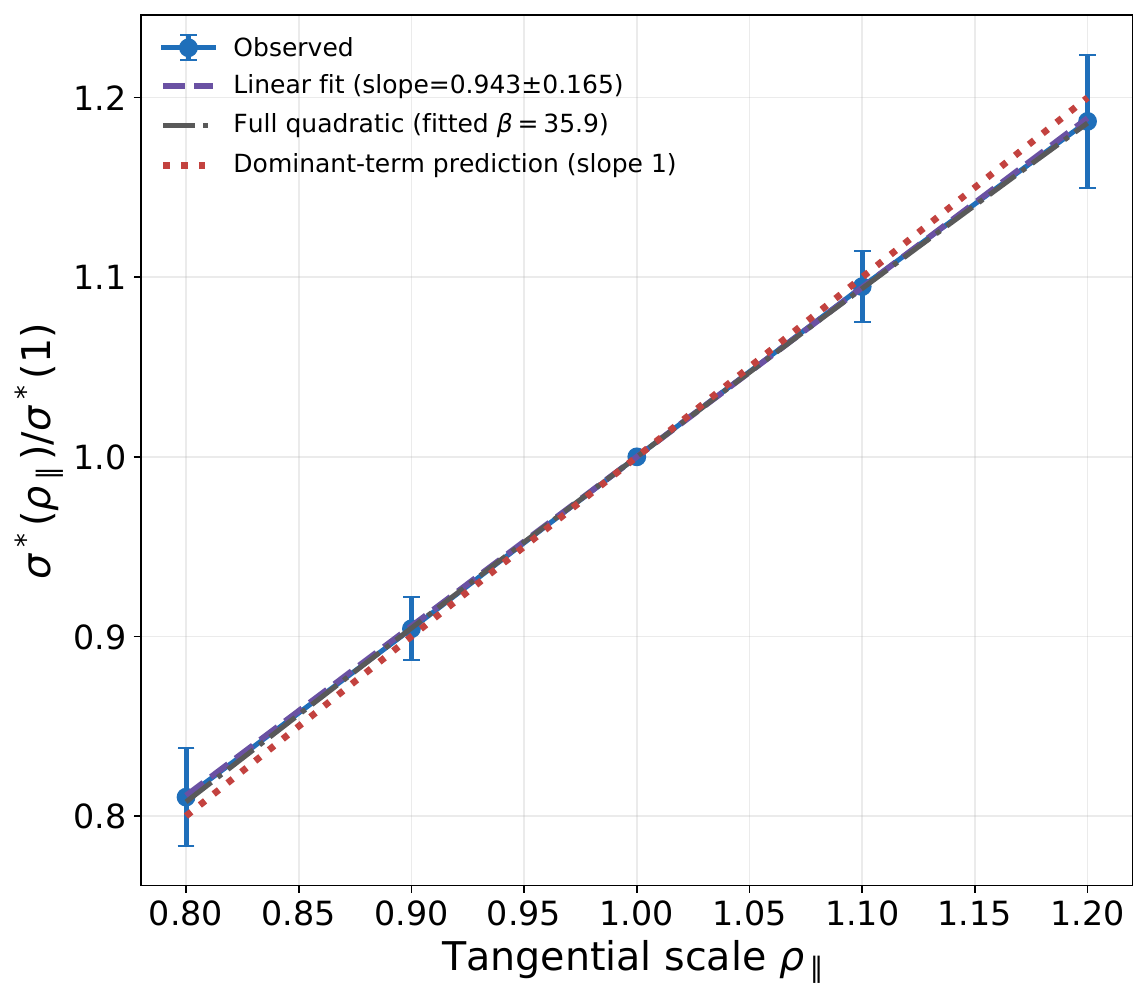}
\end{minipage}
\begin{minipage}{0.49\textwidth}
    \centering
    \includegraphics[width=\linewidth]
    {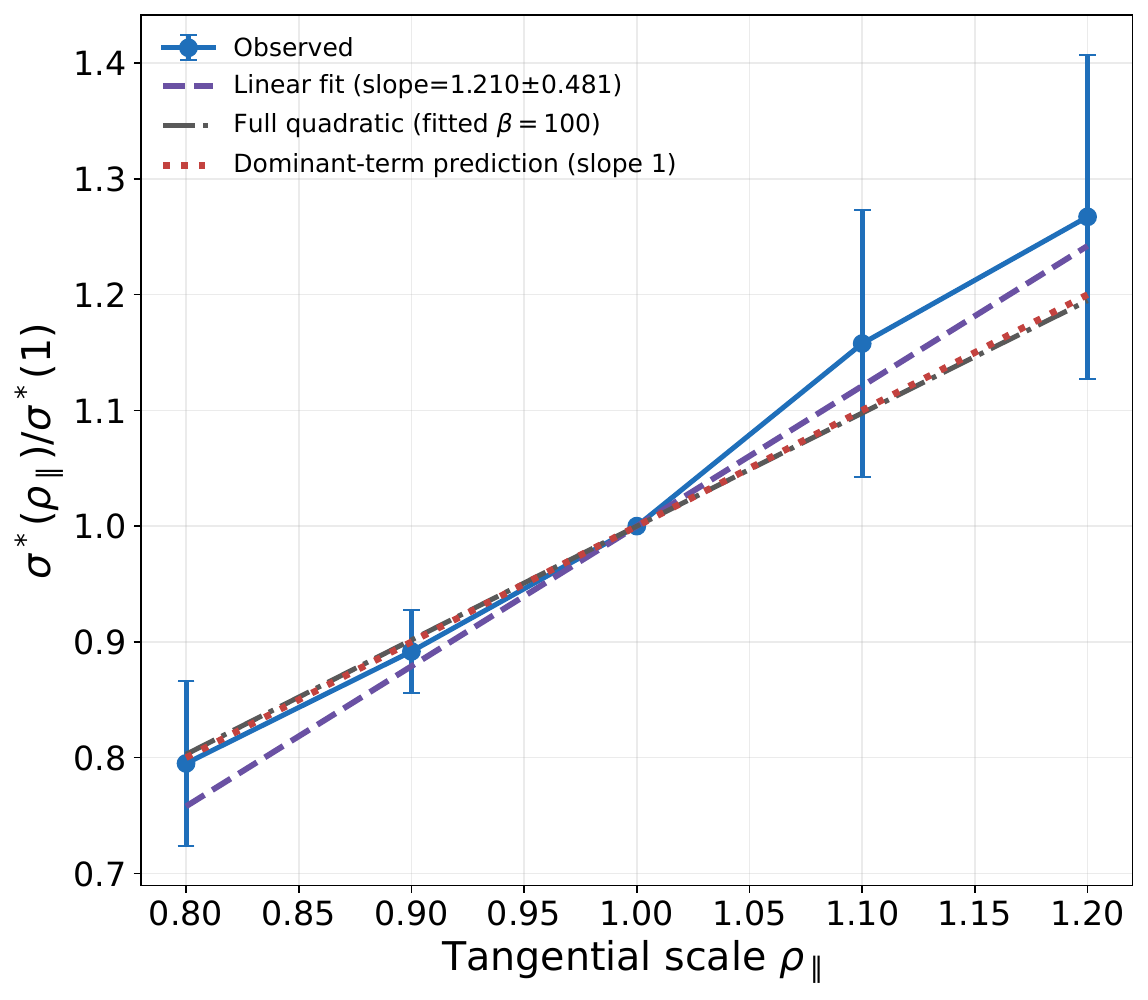}
\end{minipage}
\caption{Dependence of the preferred bandwidth on the tangential gain.
Left: $\mathbb S^2$.  Right: $\mathbb S^1\times\mathbb S^2$.
Points and error bars are the observed mean normalized minimizers and their
$95\%$ confidence intervals.  The red dotted line is the slope-one
dominant-term prediction.  The anchored empirical slope is reported in each
legend, while the gray curve is a one-parameter descriptive fit.}
\label{fig:latent_smoothing_rho_optima}
\end{figure}

\subsubsection{Orthogonal response and latent dimension}
\label{subsec:latent_smoothing_orthogonal_dimension}

We next vary
\[
    L_{\mathrm{orth}}
    \in\{0,0.15,0.25,0.35,0.5\}
\]
while fixing $\rho_{\parallel}=1$, $\delta=0$, and $d-m=80$.  The theorem
isolates the quadratic penalty
$L_{\mathrm{orth}}^2(d-m)\sigma^2$.  If all remaining second-order terms
are summarized by $q_{\mathrm{orth}}\geq0$, the corresponding local
minimizer has the reciprocal form
\begin{equation}
\label{eq:lorth_reciprocal_prediction}
    \sigma_{\mathrm{quad}}^*(L_{\mathrm{orth}})
    =
    \frac{a}{80L_{\mathrm{orth}}^2+q_{\mathrm{orth}}}.
\end{equation}
The gray curve in \Cref{fig:latent_smoothing_orthogonal_optima} fits the
single nuisance parameter $q_{\mathrm{orth}}$.  The red line instead shows
the dominant scaling $\sigma^*\propto L_{\mathrm{orth}}^{-2}$, anchored at
the largest tested positive value and using no fitted nuisance parameter.

For the dimension experiment, we fix $\rho_{\parallel}=1$, $\delta=0$,
and $L_{\mathrm{orth}}=0.5$, and vary
\[
    d-m\in\{10,25,40,60,80\}.
\]
On $\mathbb S^2$, these values correspond to
$d\in\{12,27,42,62,82\}$; on the product, they correspond to
$d\in\{13,28,43,63,83\}$.  The explicit penalty is now
$0.25(d-m)\sigma^2$.  The gray comparison curve has the form
\[
    \sigma_{\mathrm{quad}}^*(d)
    =
    \frac{a}{0.25(d-m)+q_d},
\]
with one fitted $q_d\geq0$, while the red line shows the dominant scaling
$\sigma^*\propto(d-m)^{-1}$ anchored at $d-m=10$.  Because the remainder
$Q$ in the theorem can also depend on $d$, this experiment isolates the
explicit orthogonal-dimensional contribution rather than claiming a complete
formula for all dimension dependence.

\begin{figure}[t]
\centering
\begin{minipage}{0.41\textwidth}
    \centering
    \includegraphics[width=\linewidth]
    {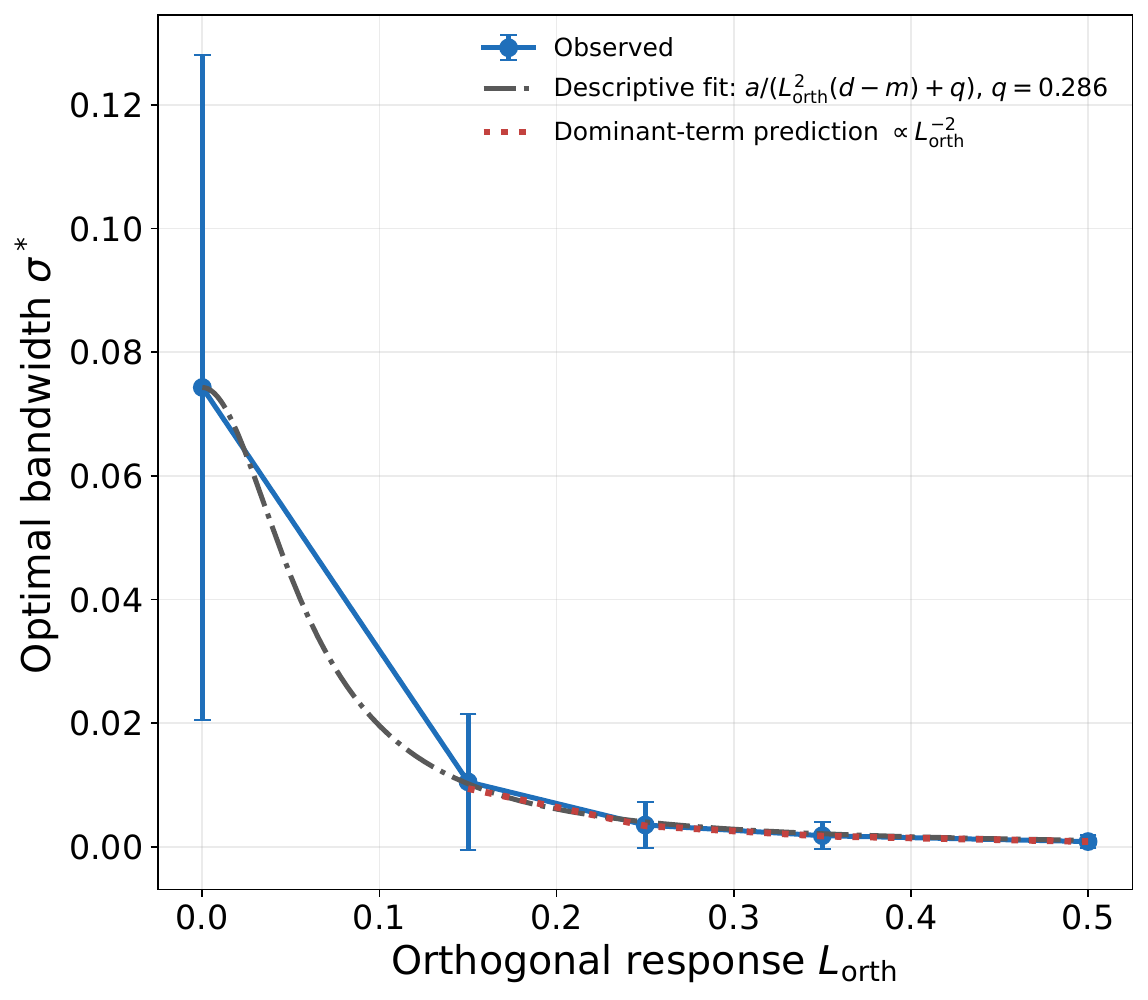}
\end{minipage}
\begin{minipage}{0.41\textwidth}
    \centering
    \includegraphics[width=\linewidth]
    {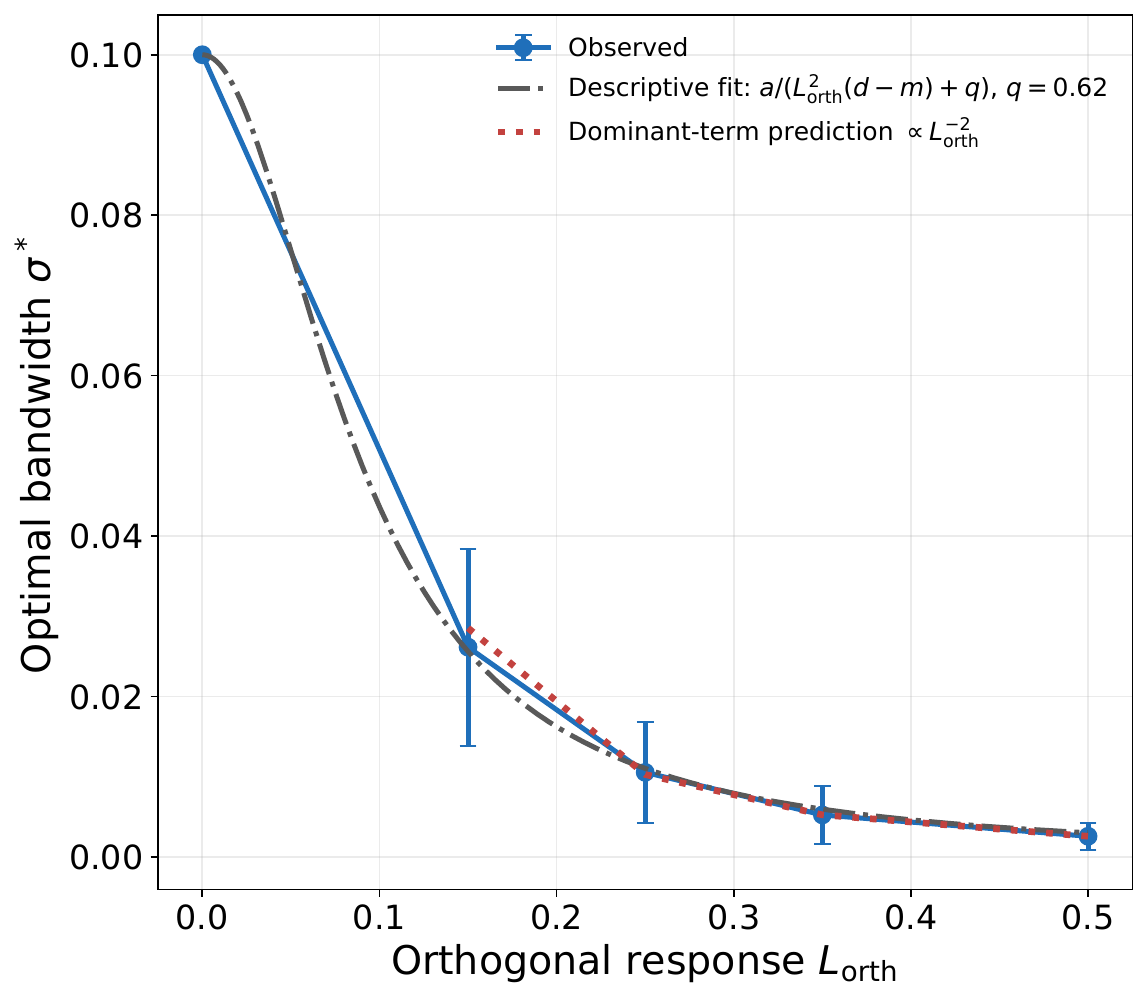}
\end{minipage}

\vspace{0.5em}

\begin{minipage}{0.41\textwidth}
    \centering
    \includegraphics[width=\linewidth]
    {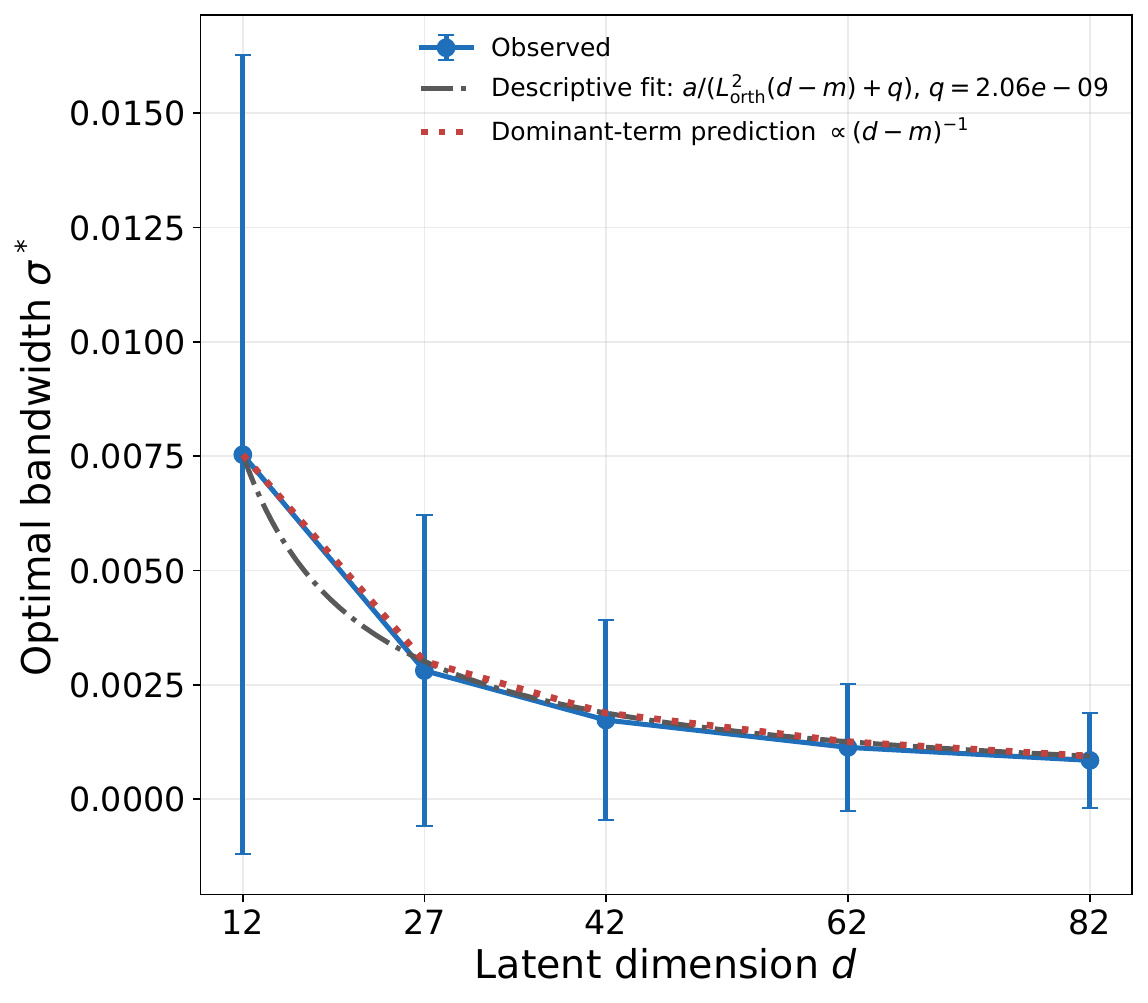}
\end{minipage}
\begin{minipage}{0.41\textwidth}
    \centering
    \includegraphics[width=\linewidth]
    {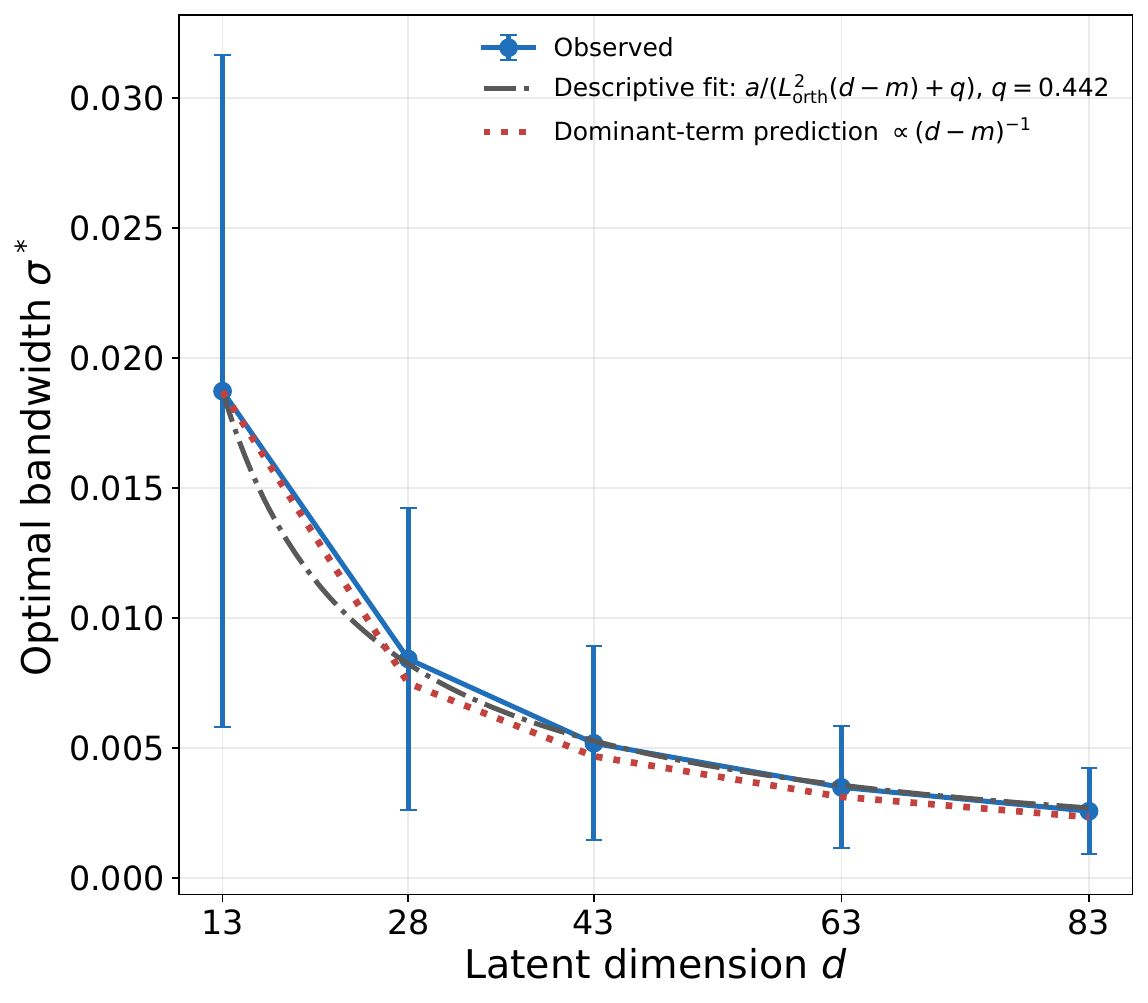}
\end{minipage}
\caption{Effect of the orthogonal decoder response and the latent dimension.
Top row: $L_{\mathrm{orth}}$.  Bottom row: $d$.  Left column:
$\mathbb S^2$.  Right column: $\mathbb S^1\times\mathbb S^2$.
Observed means and $95\%$ confidence intervals are plotted in blue.  Gray
curves are one-parameter descriptive reciprocal fits.  Red dotted curves are
the anchored dominant-term predictions $L_{\mathrm{orth}}^{-2}$ and
$(d-m)^{-1}$.}
\label{fig:latent_smoothing_orthogonal_optima}
\end{figure}

The reconstruction-error sweep and its interpretation are reported in
\Cref{subsec:latent_smoothing_delta}.

\paragraph{Scope of the numerical evidence.}
The experiment verifies the local parameter identities of the explicit
encoder-decoder and tests the parameter dependence of the small-bandwidth
bound.  It does not establish sharpness of the theorem's constants.  The
comparison on $\mathbb S^1\times\mathbb S^2$ is especially useful in this
respect: agreement there suggests that the latent-smoothing mechanism may
extend beyond the stated strict MTW regime, but it is not evidence that the
current proof applies outside that regime.

\subsection{Exploratory Real-Data Study on MNIST}
\label{subsec:mnist_benchmark}

The preceding synthetic experiments evaluate the theoretical quantities in a
setting where the manifold and its tangent spaces are known. We now consider
MNIST as a complementary real-data example. Each image is represented as a
vector in $\mathbb R^{784}$, but MNIST does not come with a known smooth
manifold, intrinsic dimension, intrinsic metric, or verifiable
$\mathrm{MTW}(K>0)$ structure. Consequently, this experiment is not used to
verify the hypotheses of \Cref{thm:latent_decomposition}. Instead, it examines
whether perturbing a learned low-dimensional representation can provide a
more useful smoothing mechanism than perturbing all pixel coordinates
directly.

\paragraph{Experimental setup.}
Using random seed $42$, we select a balanced empirical sample of $n=20$
images, with two images from each digit class. We train the
Geometry-Preserving Encoder--Decoder of \citet{lee2025geometry} using only
these anchors, with encoder
$f_\theta:\mathbb R^{784}\to\mathbb R^{15}$ and decoder
$g_\phi:\mathbb R^{15}\to\mathbb R^{784}$. The decoder is trained in the
presence of latent Gaussian perturbations.

The complete network architecture, training objectives, optimization
settings, and local-PCA diagnostics are given in
\Cref{app:mnist_details}. Because MNIST has no known tangent spaces or
intrinsic metric, these diagnostics describe the trained network relative
to data-estimated PCA frames; they are not interpreted as estimates of the
geometric constants in \Cref{thm:latent_decomposition}.

\paragraph{Finite-reference transport comparison.}
For evaluation, let
\[
    \hat\mu_{\mathrm{ref}}
    =
    \frac1{N_{\mathrm{eval}}}
    \sum_{j=1}^{N_{\mathrm{eval}}}
    \delta_{x_j^{\mathrm{ref}}},
    \qquad
    N_{\mathrm{eval}}=5000,
\]
be a fixed reference sample from the larger MNIST training collection. The
encoder and decoder are not optimized against these reference images. An
exact discrete optimal transport calculation between the generated and
reference samples in $\mathbb R^{784}$ is computationally expensive, so we
use the debiased Sinkhorn divergence implemented by \texttt{GeomLoss}
\citep{feydy2019interpolating}, with \texttt{p=2}, \texttt{blur=0.01}, and
\texttt{scaling=0.8}. The \texttt{GeomLoss} convention for \texttt{p=2}
uses the ground cost $\frac12\|x-y\|_2^2$. We multiply the returned value by
two so that its ground-cost normalization agrees with $\|x-y\|_2^2$. We call
the resulting entropically regularized quantity the \emph{Sinkhorn transport
value}; it is not identified with the exact $W_2^2$ distance.

\paragraph{Perturbation-scale convention.}
The numerical scale of a learned latent representation is not fixed a
priori. We therefore define
\[
    s_z
    :=
    \operatorname{median}_{i<j}
    \|f_\theta(x_i)-f_\theta(x_j)\|_2
\]
and generate latent perturbations according to
\[
    z_i^\lambda
    =
    f_\theta(x_i)+\lambda s_z\zeta,
    \qquad
    \zeta\sim\mathcal N(0,I_{15}).
\]
The pixel-space baseline is generated by
\[
    x_i^\lambda
    =
    \operatorname{clamp}(x_i+\lambda\eta,0,1),
    \qquad
    \eta\sim\mathcal N(0,I_{784}).
\]
Thus $\lambda$ is a dimensionless experimental control. The corresponding
noise standard deviations are $\lambda s_z$ in latent coordinates and
$\lambda$ in pixel coordinates. The common horizontal variable makes it
convenient to display both sweeps, but it does not assign the same physical
noise scale to the two spaces. The clipped pixel perturbation is also a
bounded-image baseline rather than the exact ambient Gaussian convolution
appearing in \Cref{thm:decomposition}.

\subsubsection{Sinkhorn Transport Value and Empirical Coverage}

\begin{figure}[t]
\centering
\includegraphics[width=0.65\textwidth]
{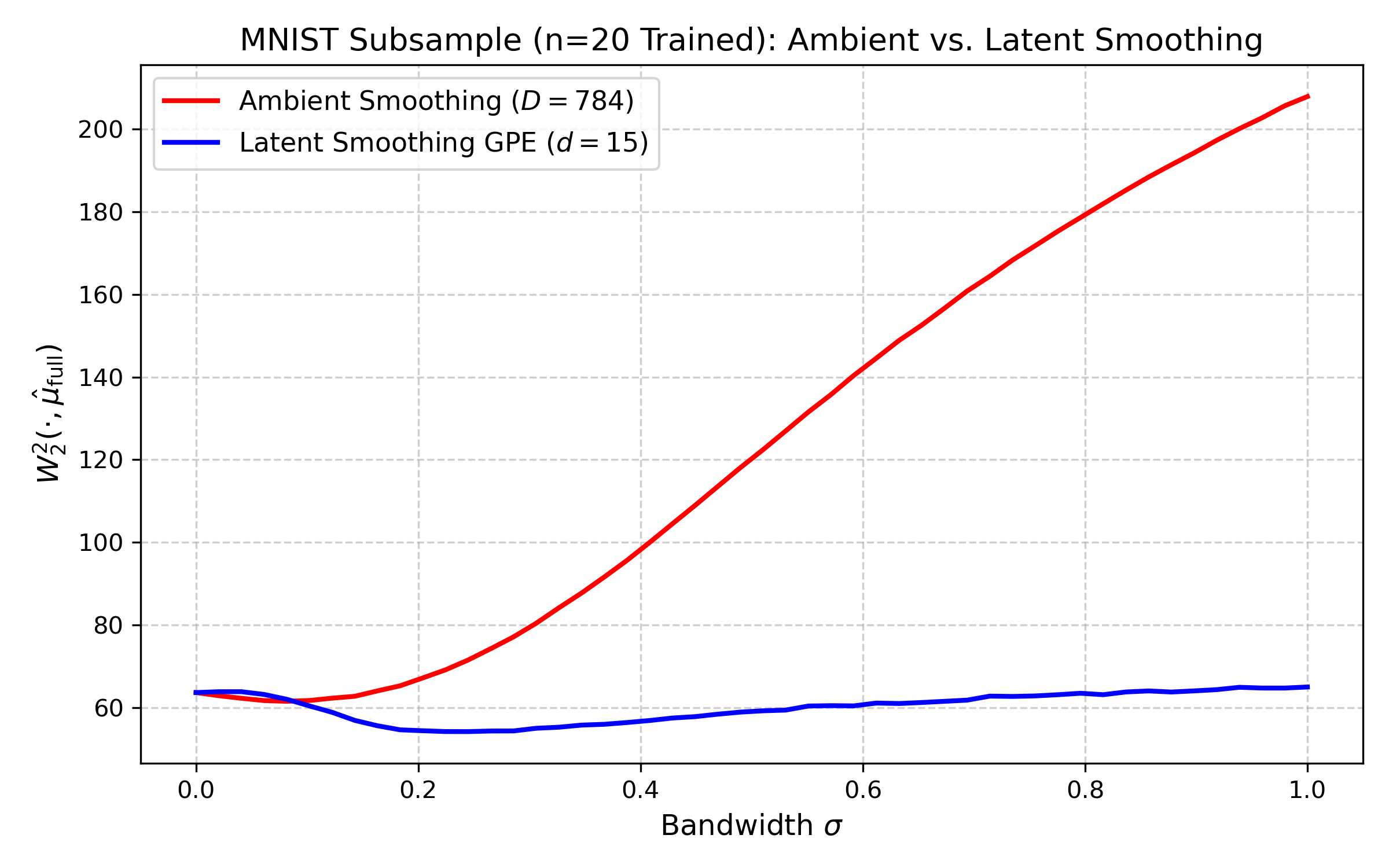}
\caption{Sinkhorn transport value relative to the finite reference measure
$\hat\mu_{\mathrm{ref}}$ with $N_{\mathrm{eval}}=5000$, plotted against the
dimensionless perturbation parameter $\lambda$. Blue: Gaussian perturbations
in the $d=15$ latent space followed by decoding. Red: Gaussian perturbations
in all $D=784$ pixel coordinates followed by clipping to $[0,1]^{784}$. The
two perturbation mechanisms use the scale conventions specified in the text,
so their horizontal coordinates are experimental controls rather than a
common physical bandwidth.}
\label{fig:w2_comparison}
\end{figure}

The two perturbation mechanisms are evaluated over the grids shown in
\Cref{fig:w2_comparison}. For this trained split, the pixel-space baseline
reaches a shallow minimum near $\lambda_{\mathrm{pix}}=0.10$ and then
increases rapidly. The latent curve reaches a lower minimum near
$\lambda_{\mathrm{lat}}=0.245$ and grows more slowly over the displayed
range. The corresponding minimum Sinkhorn transport values are approximately
$61$ and $54$, respectively. Because latent and pixel perturbations use
different scale normalizations, the locations $0.10$ and $0.245$ should not
be compared as estimates of the same theoretical bandwidth. The relevant
observation is that, after optimizing each experimental control separately,
the decoded latent samples attain the lower value in this experiment.

The contrast is consistent with the qualitative motivation for latent
smoothing. Pixel perturbations act independently in all $784$ coordinates.
Latent perturbations act in $15$ coordinates and are subsequently shaped by
the decoder, which was trained to reconstruct the anchors in the presence of
latent noise. The experiment does not isolate dimensionality from this
learned decoder effect, so the observed improvement should be attributed to
the complete latent perturbation and decoding procedure.

\begin{figure}[t]
\centering
\includegraphics[width=0.79\textwidth]
{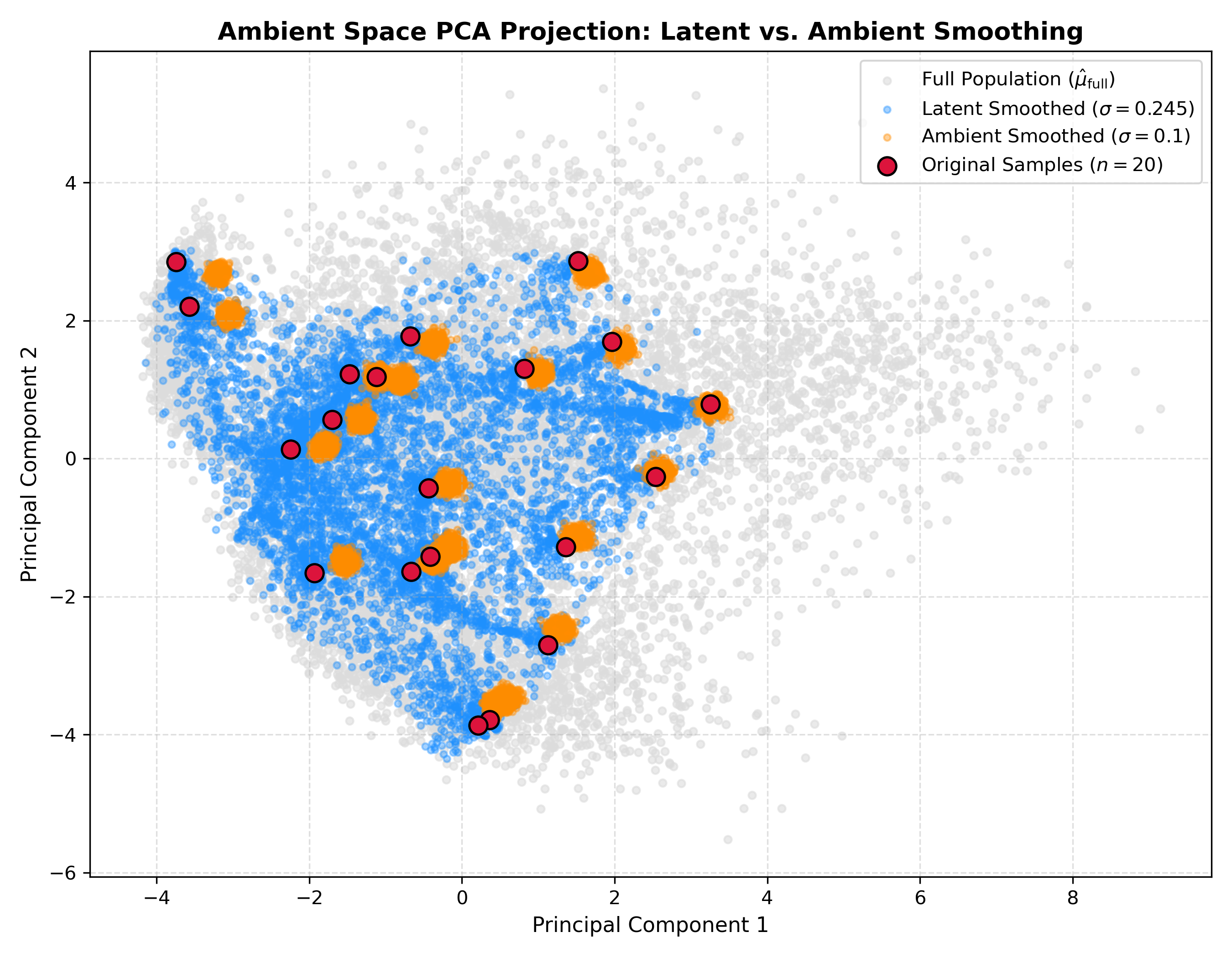}
\caption{Two-dimensional PCA visualization of the finite reference sample,
the $n=20$ training anchors, and generated samples at the empirically selected
perturbation levels. Gray: reference images. Crimson: training anchors.
Orange: clipped pixel-space perturbations at
$\lambda_{\mathrm{pix}}=0.10$. Light blue: decoded latent perturbations at
$\lambda_{\mathrm{lat}}=0.245$. The PCA projection is used only to visualize
empirical coverage and does not represent intrinsic geodesic coordinates.}
\label{fig:pca_projection}
\end{figure}

To visualize the distributions underlying the transport curves, we project
the reference images, training anchors, and generated samples onto a common
two-dimensional PCA coordinate system in \Cref{fig:pca_projection}. For each
anchor, we generate $500$ pixel-space and $500$ latent perturbations at their
respective empirically selected values of $\lambda$. In this projection, the
pixel-space samples remain concentrated near the anchors, whereas the decoded
latent samples occupy a broader part of the region containing the reference
images. This plot describes coverage only after projection and is not used as
evidence of tangent or geodesic motion.

\begin{figure}[t]
\centering
\includegraphics[width=\textwidth]
{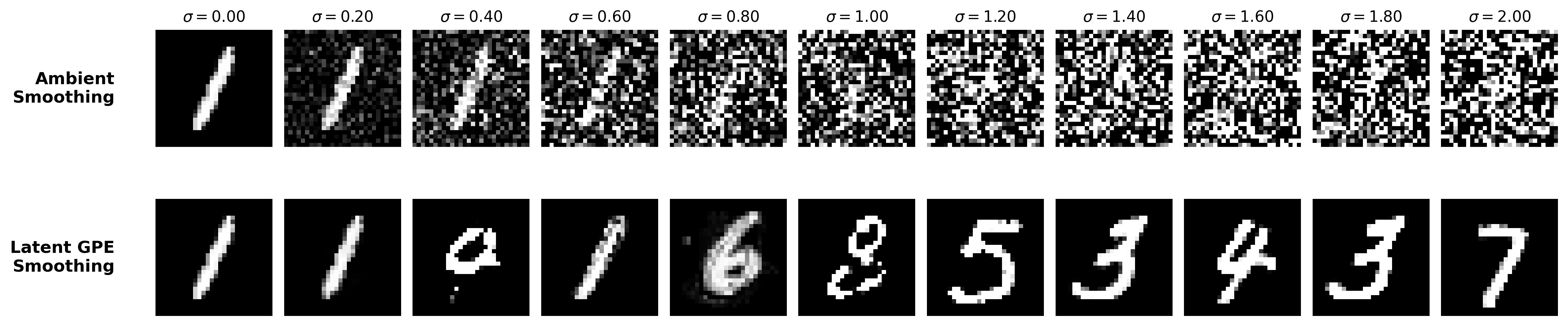}
\caption{Representative samples at increasing values of the dimensionless
perturbation parameter $\lambda$. Upper row: Gaussian perturbations in all
$784$ pixel coordinates followed by clipping. Lower row: Gaussian
perturbations in the $15$-dimensional latent space followed by decoding. The
noise is sampled independently at each displayed value, so columns show
representative samples rather than a single coupled trajectory.}
\label{fig:sigma_sweep}
\end{figure}

The representative images in \Cref{fig:sigma_sweep} provide a complementary
qualitative comparison. Pixel-space perturbations progressively obscure the
digit structure as their magnitude increases. Decoded latent perturbations
remain recognizable over a wider displayed range and can produce variations
beyond the original anchors.

Taken together, the Sinkhorn comparison and sample visualizations show that,
for this trained model and finite reference sample, perturbing and decoding
the $15$-dimensional representation performs better than the clipped
pixel-space baseline over the tested parameter grids. This real-data result
supports the practical motivation for latent smoothing, while the controlled
synthetic experiments remain the setting in which the geometric assumptions
and parameter dependence of \Cref{thm:latent_decomposition} are directly
tested.

\section{Conclusion}
\label{sec:conclusion}
In this paper, we studied intrinsic, ambient, and latent smoothing of empirical measures supported on low-dimensional manifolds. Under the stated MTW regularity assumptions, sufficiently small intrinsic heat smoothing improves the squared intrinsic Wasserstein error. Ambient noise also moves mass away from the manifold, and our combined bound explains how the surrounding dimension restricts the smoothing scale it suggests. The latent analysis shows how reconstruction accuracy and the encoder--decoder's response to perturbations shape this tradeoff.

The resulting second-order surrogates provide local bandwidth-selection rules and sufficient conditions under which the optimized latent surrogate is tighter than its ambient counterpart. These results show that the benefit of latent smoothing depends not only on reducing dimension, but also on how accurately the encoder--decoder pair preserves tangential directions and controls complementary latent directions. The surrogate minimizers should not be interpreted as exact minimizers of the Wasserstein error.

The synthetic experiments support the predicted bandwidth and parameter trends. The product-of-spheres example suggests that some trends may persist beyond the strict MTW regime, although it does not extend the proof. The MNIST experiment is exploratory and shows that decoded latent perturbations achieve a lower Sinkhorn transport value than the pixel-space baseline in the tested setting. Extending the theory to more general manifolds remains an important direction for future work.

\section*{Acknowledgments and Disclosure of Funding}

Wonjun Lee acknowledges support from a startup fund at The Ohio State University. 
Computational resources were provided by The Ohio State University.

\appendix
\section{Appendix}
\label{sec:appendix}
{This appendix provides additional technical details, proofs, and
numerical diagnostics for the main results.}

\subsection{Additional Technical Propositions and Missing Proofs}

\begin{proposition}[Gaussian Tail Integral Bounds]\label{lemma:gaussian_tail}
For any radius $r> \sigma \sqrt{D}$, let $\gamma_\sigma = \mathcal{N}(0,\sigma^2 \mathbf{I}_D)$ denote the isotropic Gaussian measure in $\mathbb{R}^D$ and take $\varepsilon\sim \gamma_\sigma$. The quadratic and zeroth moment tail integrals over the exterior domain $\{\|\varepsilon\| > r\}$ satisfy
\begin{align}
   \mathbb{E} \left[\|\varepsilon\|^2
   \mathbf{1}(\|\varepsilon\|>r) \right]&\le \sqrt{3}\sigma^2 
    D\exp\left(
        -\frac{(r - \sigma\sqrt{D})^2}{4\sigma^2}
    \right), \label{eq:tail_quad} \\
    \mathbb{P}\left( \|\varepsilon\|>r \right) &\le \exp\left(
        -\frac{(r - \sigma\sqrt{D})^2}{2\sigma^2}
    \right).\label{eq:tail_zeroth}
\end{align}
\end{proposition}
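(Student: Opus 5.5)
The plan is to reduce both bounds to one Gaussian concentration estimate for the norm, and then obtain the quadratic tail from the zeroth-order tail by Cauchy--Schwarz.

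\textbf{Zeroth-order tail \eqref{eq:tail_zeroth}.} Write $\varepsilon=\sigma\xi$ with $\xi\sim\mathcal N(0,\mathbf I_D)$. The map $\xi\mapsto\|\xi\|$ is $1$-Lipschitz, so the Gaussian concentration inequality (Tsirelson--Ibragimov--Sudakov, or the Gaussian log-Sobolev/Herbst argument) gives
\[
    \mathbb P\bigl(\|\xi\|-\mathbb E\|\xi\|>s\bigr)\le e^{-s^2/2},\qquad s\ge0.
\]
By Jensen's inequality, $\mathbb E\|\xi\|\le(\mathbb E\|\xi\|^2)^{1/2}=\sqrt D$. Take $s=r/\sigma-\sqrt D$, which is positive because $r>\sigma\sqrt D$. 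Then
\[
    \mathbb P(\|\varepsilon\|>r)\le\mathbb P\bigl(\|\xi\|-\mathbb E\|\xi\|>s\bigr)\le\exp\!\left(-\frac{(r-\sigma\sqrt D)^2}{2\sigma^2}\right).
\]

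\textbf{Quadratic tail \eqref{eq:tail_quad}.} Apply Cauchy--Schwarz:
\[
    \mathbb E\bigl[\|\varepsilon\|^2\mathbf 1(\|\varepsilon\|>r)\bigr]\le\bigl(\mathbb E\|\varepsilon\|^4\bigr)^{1/2}\,\mathbb P(\|\varepsilon\|>r)^{1/2}.
\]
The chi-square fourth moment is $\mathbb E\|\varepsilon\|^4=\sigma^4D(D+2)$. Since $D\ge1$, we have $D(D+2)\le3D^2$, so the first factor is at most $\sqrt3\,\sigma^2D$. Taking the square root of the bound from the first step halves the exponent, giving $\exp\bigl(-(r-\sigma\sqrt D)^2/(4\sigma^2)\bigr)$. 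This is exactly \eqref{eq:tail_quad}.

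\textbf{Main obstacle.} The only nontrivial input is the dimension-free concentration of $\|\xi\|$ about its mean with variance proxy $1$. I would cite it as the standard Lipschitz Gaussian concentration inequality rather than reprove it.

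A direct chi-square Chernoff bound would also work, but it produces a less clean exponent. Everything else is elementary moment computation.
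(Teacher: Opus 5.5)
Your proof is correct and follows the paper's argument. The zeroth-order tail uses $\mathbb E\|\varepsilon\|\le\sigma\sqrt D$ plus Lipschitz Gaussian concentration exactly as the paper does, and the quadratic tail is reduced to it by Cauchy--Schwarz; the only difference is that the paper applies Cauchy--Schwarz coordinatewise with $\mathbb E\varepsilon_i^4=3\sigma^4$ and sums over $i$, whereas you apply it once with $\mathbb E\|\varepsilon\|^4=\sigma^4D(D+2)$, which gives the marginally sharper intermediate constant $\sigma^2\sqrt{D(D+2)}$ before you relax it to $\sqrt3\,\sigma^2D$.
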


\begin{proof}
Let $\varepsilon \sim \gamma_\sigma$. By the {Cauchy--Schwarz} inequality,
   $\mathbb{E}\|\varepsilon\|\le \sqrt{\mathbb{E}\| \varepsilon\|^2} = \sigma\sqrt{D}$.
To prove \eqref{eq:tail_zeroth},
\begin{align*}
   \mathbb{P}(\|\varepsilon\|>r) &= \mathbb{P}(\|\varepsilon\| - \sigma\sqrt{D} > r - \sigma\sqrt{D}) \\
   & \le  \mathbb{P}(\|\varepsilon\| - \mathbb{E}\|\varepsilon\| > r - \sigma\sqrt{D}) \\
   & \le \exp\left(
        - \frac{(r- \sigma\sqrt{D})^2}{2\sigma^2}
   \right),
\end{align*}
where the first inequality follows from $\mathbb{E}\|\varepsilon\|\le \sigma \sqrt{D}$ and
the second inequality follows from the Gaussian concentration inequality
{\citep[Theorem~5.6]{boucheron2013concentration}}. To prove \eqref{eq:tail_quad},
\begin{align*}
    \mathbb{E}[\|\varepsilon\|^2 \mathbf{1}(\|\varepsilon\|> r) ] & = \sum_{i=1}^D\mathbb{E} [|\varepsilon_i|^2  \mathbf{1}(\|\varepsilon\|> r)] \\
    & \le \sum_{i=1}^D  \sqrt{\mathbb{E}|\varepsilon_i|^4}~ \sqrt{\mathbb{P}(\|\varepsilon\|>r)} \\
    & = D\sigma^2\sqrt{3}~ \sqrt{\mathbb{P}(\|\varepsilon\|>r) }\\
    & \le D\sigma^2 \sqrt{3}\exp\left(
     - \frac{(r- \sigma\sqrt{D})^2}{4\sigma^2}
    \right)
\end{align*}
where the first inequality follows from the Cauchy--Schwarz
inequality, the second equality follows from the fourth-moment
formula for a centered Gaussian random variable, and the last inequality
follows from \eqref{eq:tail_zeroth}.

\end{proof}

\section{Additional diagnostics for the latent-smoothing experiment}
\label{app:latent_smoothing_diagnostics}

This appendix reports the complete Wasserstein curves for the parameter
sweeps in \Cref{sec:latent_smoothing_experiment}; see
\Cref{fig:app_latent_smoothing_rho_curves,fig:app_latent_smoothing_lorth_curves,fig:app_latent_smoothing_dimension_curves,fig:app_latent_smoothing_delta_curves}.
Curves are means over five independent runs, with pointwise $95\%$
Student-$t$ confidence bands. Each panel also includes the direct
ambient-smoothing reference $\hat\mu_n\ast\mathcal N(0,\sigma^2I_D)$.
Circular markers denote the bandwidth grid points with the smallest
computed transport costs. The insets show
\[
    \Delta\widehat W_2^2(\sigma)
    :=\widehat W_2^2(\sigma)-\widehat W_2^2(0)
\]
on a symmetric logarithmic vertical scale, preserving the sign of the
change while resolving small improvements near zero.

\begin{figure}[t]
\centering
\begin{minipage}{0.49\textwidth}
    \centering
    \includegraphics[width=\linewidth]
    {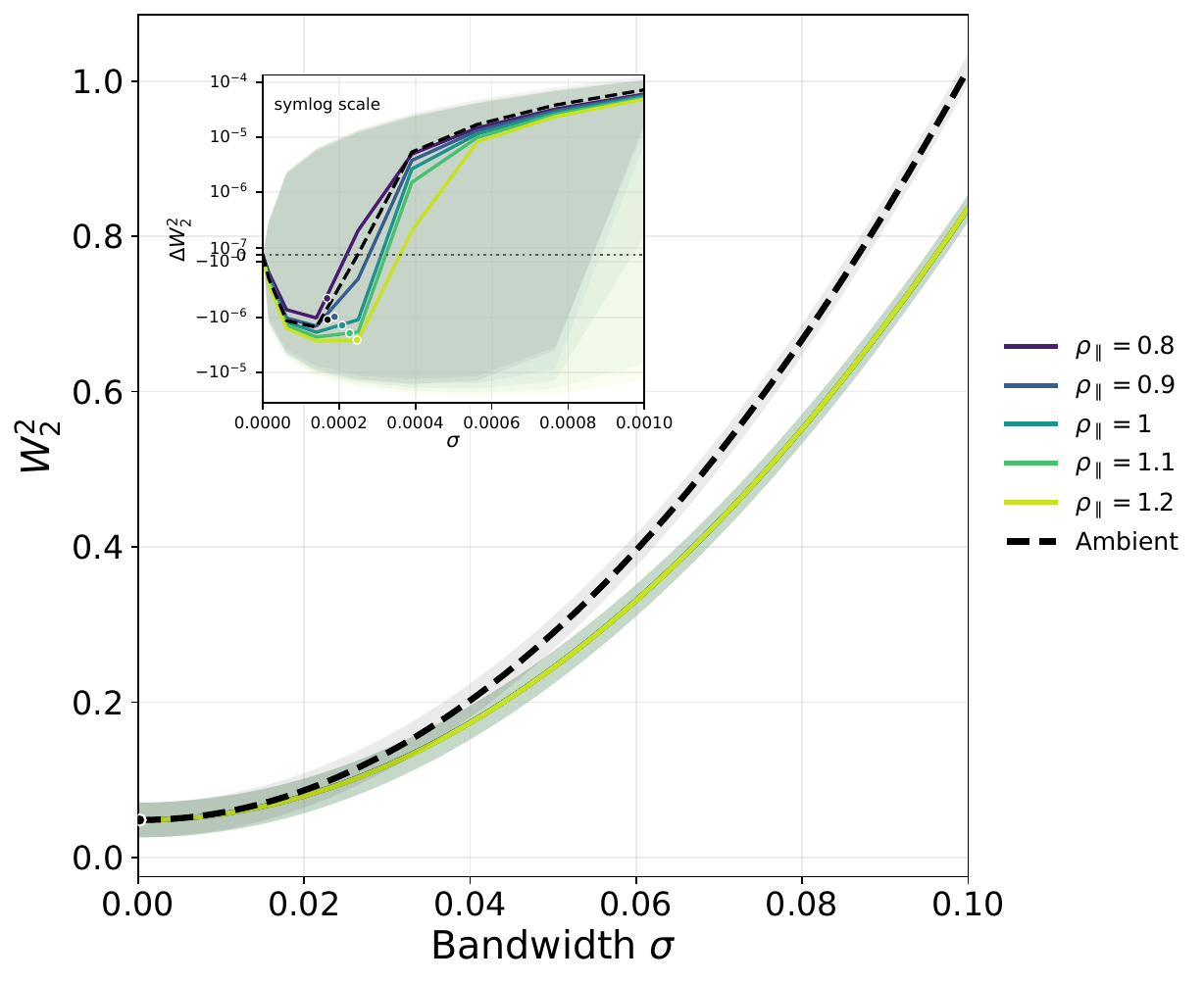}
\end{minipage}
\begin{minipage}{0.49\textwidth}
    \centering
    \includegraphics[width=\linewidth]
    {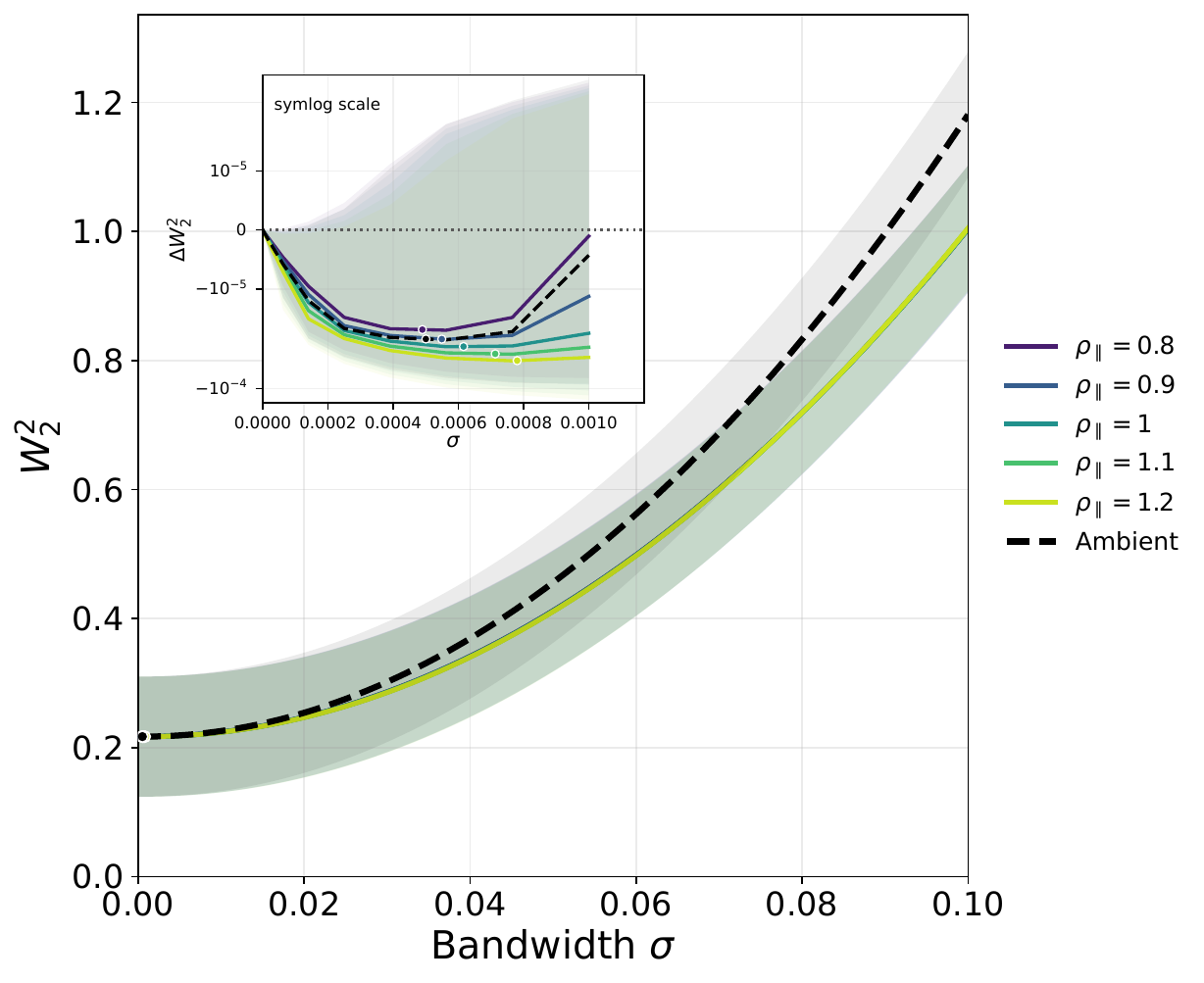}
\end{minipage}
\caption{Complete Wasserstein curves for the tangential-gain sweep.  Left:
$\mathbb S^2$.  Right: $\mathbb S^1\times\mathbb S^2$.}
\label{fig:app_latent_smoothing_rho_curves}
\end{figure}

\begin{figure}[t]
\centering
\begin{minipage}{0.49\textwidth}
    \centering
    \includegraphics[width=\linewidth]
    {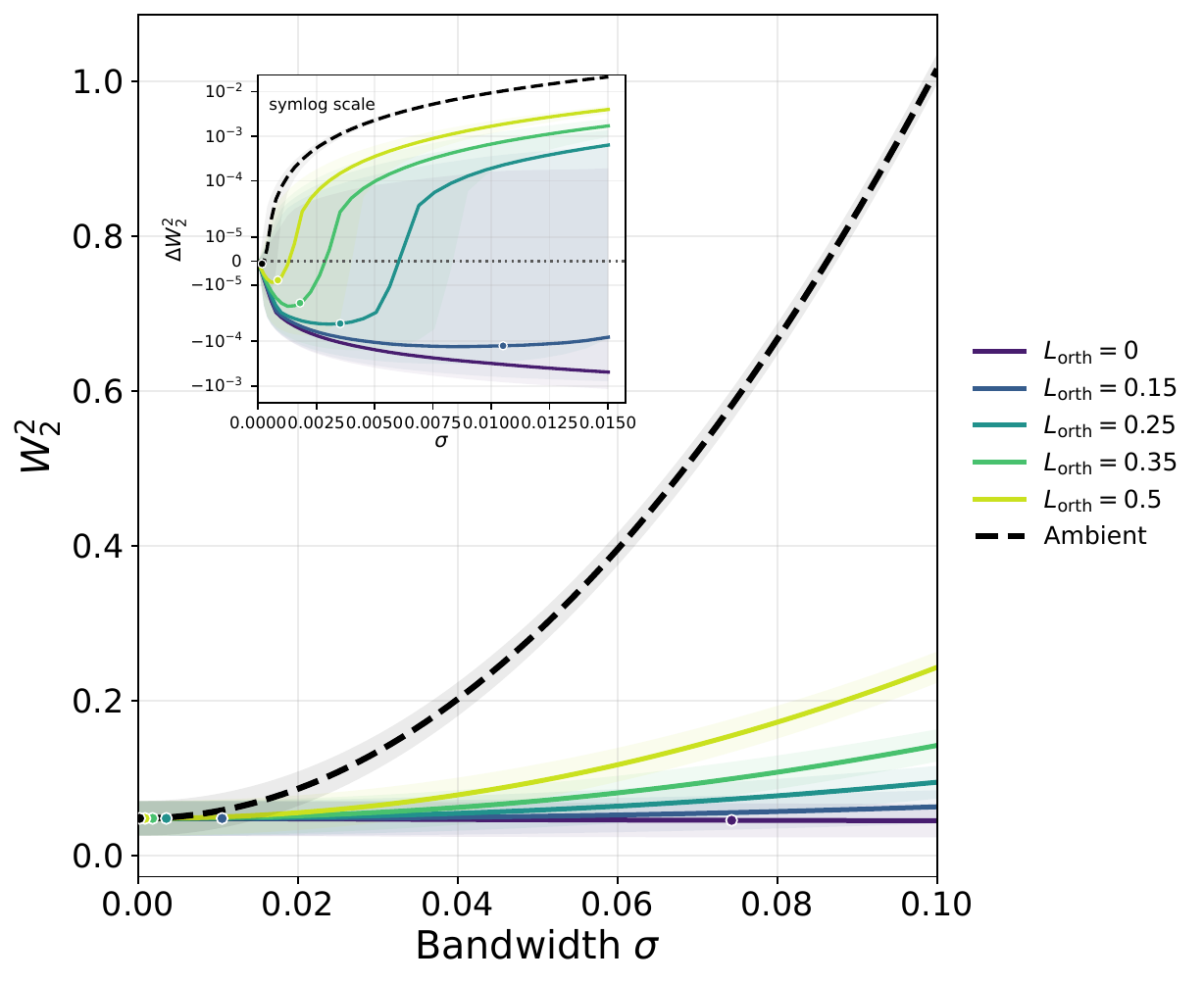}
\end{minipage}
\begin{minipage}{0.49\textwidth}
    \centering
    \includegraphics[width=\linewidth]
    {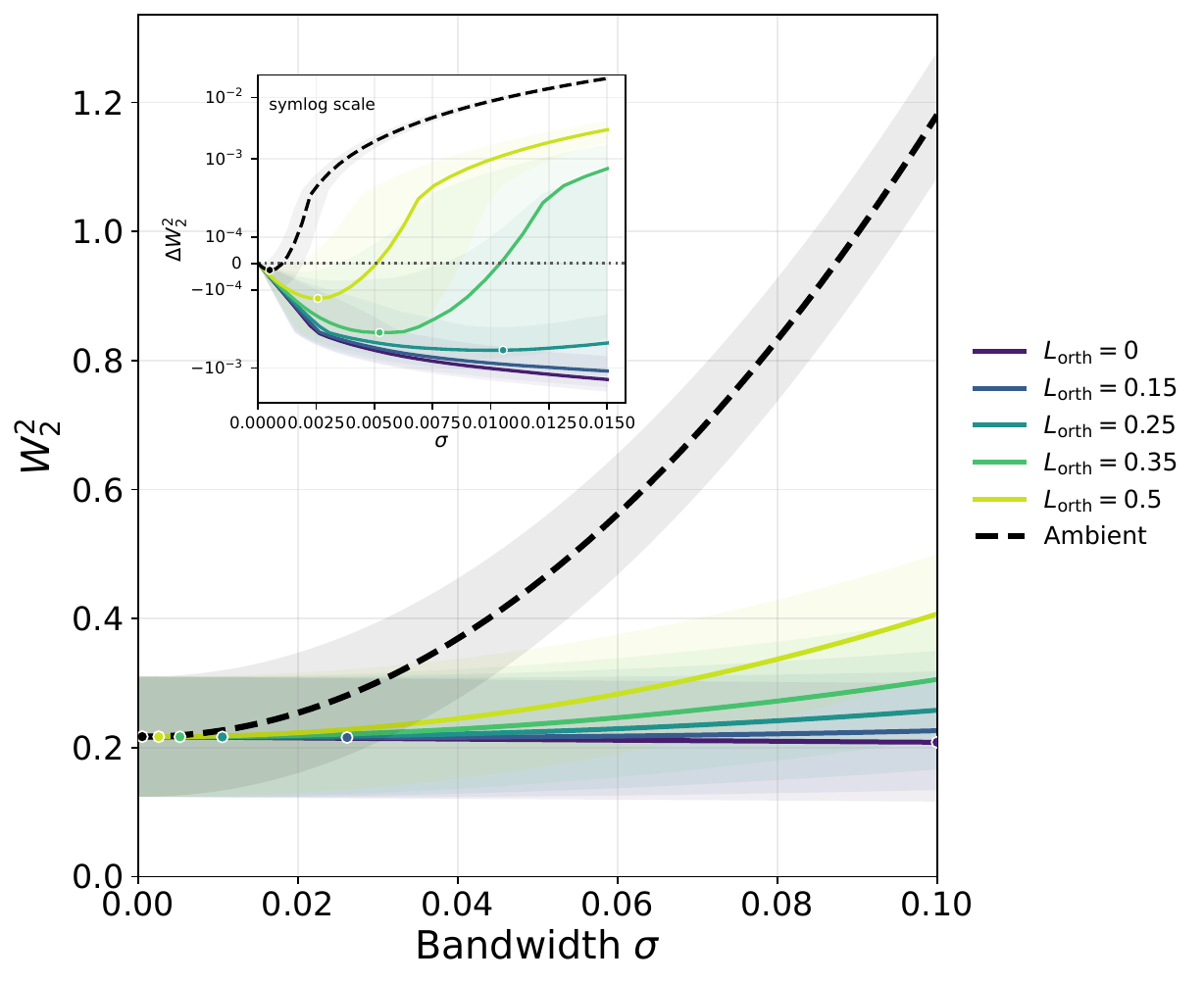}
\end{minipage}
\caption{Complete Wasserstein curves for the orthogonal-response sweep.
Left: $\mathbb S^2$.  Right: $\mathbb S^1\times\mathbb S^2$.}
\label{fig:app_latent_smoothing_lorth_curves}
\end{figure}

\begin{figure}[t]
\centering
\begin{minipage}{0.49\textwidth}
    \centering
    \includegraphics[width=\linewidth]
    {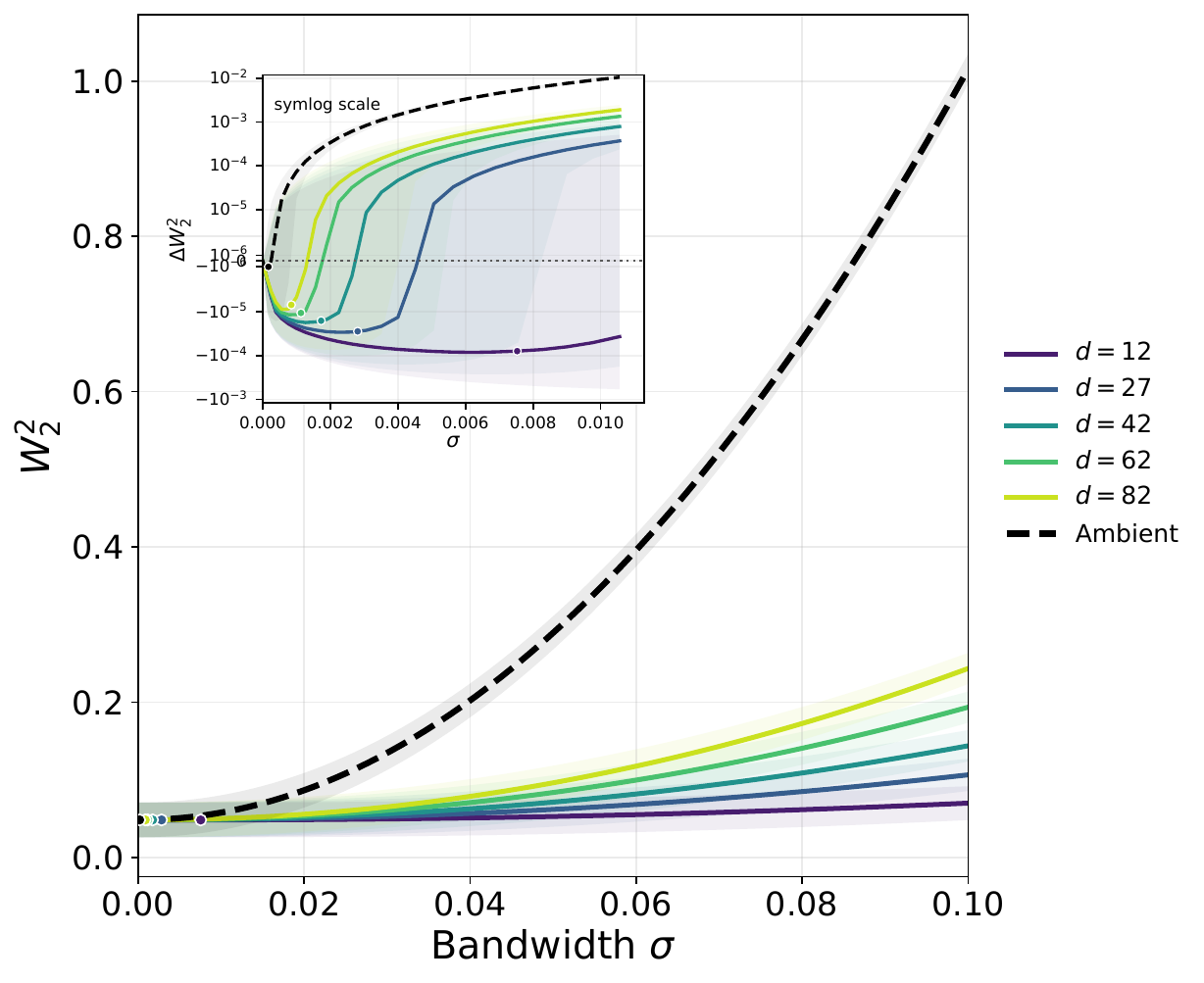}
\end{minipage}
\begin{minipage}{0.49\textwidth}
    \centering
    \includegraphics[width=\linewidth]
    {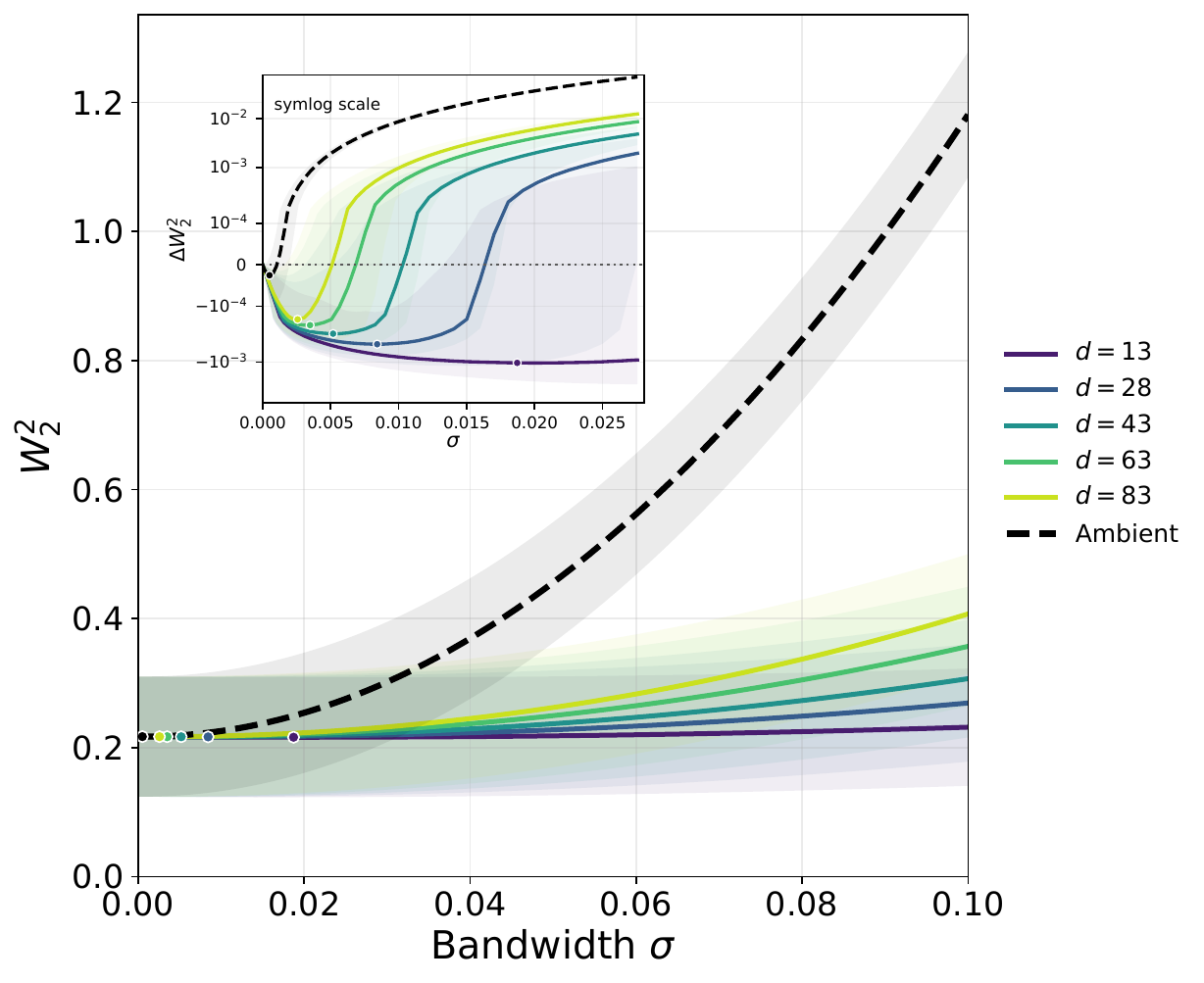}
\end{minipage}
\caption{Complete Wasserstein curves for the latent-dimension sweep.  Left:
$\mathbb S^2$.  Right: $\mathbb S^1\times\mathbb S^2$.}
\label{fig:app_latent_smoothing_dimension_curves}
\end{figure}

\subsection{Reconstruction-error sensitivity}
\label{subsec:latent_smoothing_delta}

For the reconstruction-error sweep, we vary
\[
    \delta\in\{0,0.2,0.4,0.6,0.8\}
\]
while fixing $\rho_{\parallel}=1$, $L_{\mathrm{orth}}=1$, and $d-m=80$.
Changing $\delta$ translates every decoded output by the same vector
$\delta b$ without changing the decoder Jacobian.  Among the four sweeps,
this parameter produces the largest change in the vertical level of the
Wasserstein curves.  This agrees with the leading term
$(W_0+\delta)^2$ in \Cref{thm:latent_decomposition}: unlike the explicit
orthogonal penalty, reconstruction error is already present at
$\sigma=0$.

We do not fit a universal law for $\sigma^*(\delta)$.  The same parameter
also enters the first- and second-order bandwidth-dependent coefficients in
the theorem, so their competing effects do not imply a parameter-free
monotone formula for the minimizer.  Accordingly, the full curves in
\Cref{fig:app_latent_smoothing_delta_curves} are presented as a sensitivity
analysis.  They show separately how reconstruction error changes the
baseline discrepancy and whether a positive-bandwidth improvement remains.

\begin{figure}[t]
\centering
\begin{minipage}{0.49\textwidth}
    \centering
    \includegraphics[width=\linewidth]
    {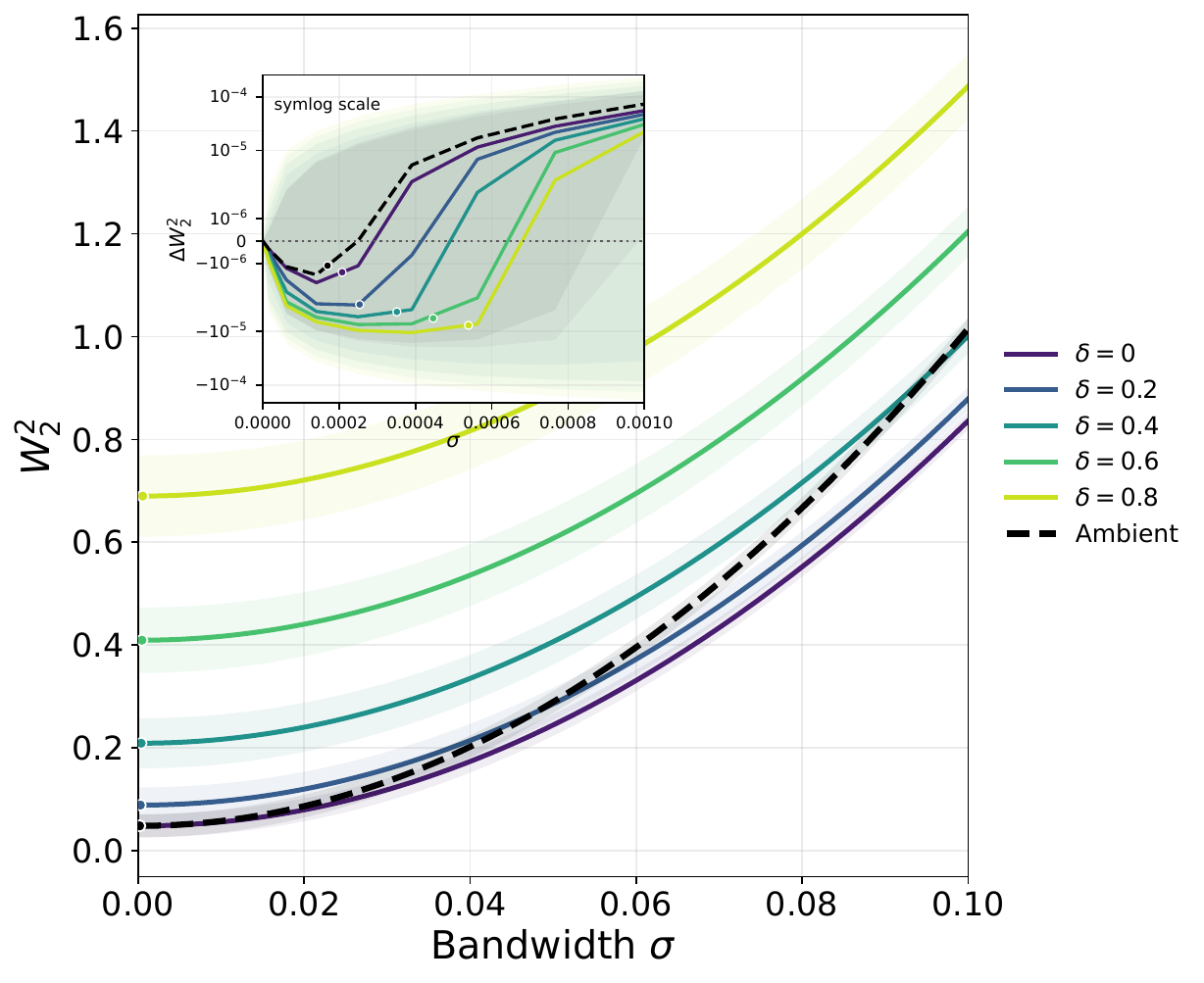}
\end{minipage}
\begin{minipage}{0.49\textwidth}
    \centering
    \includegraphics[width=\linewidth]
    {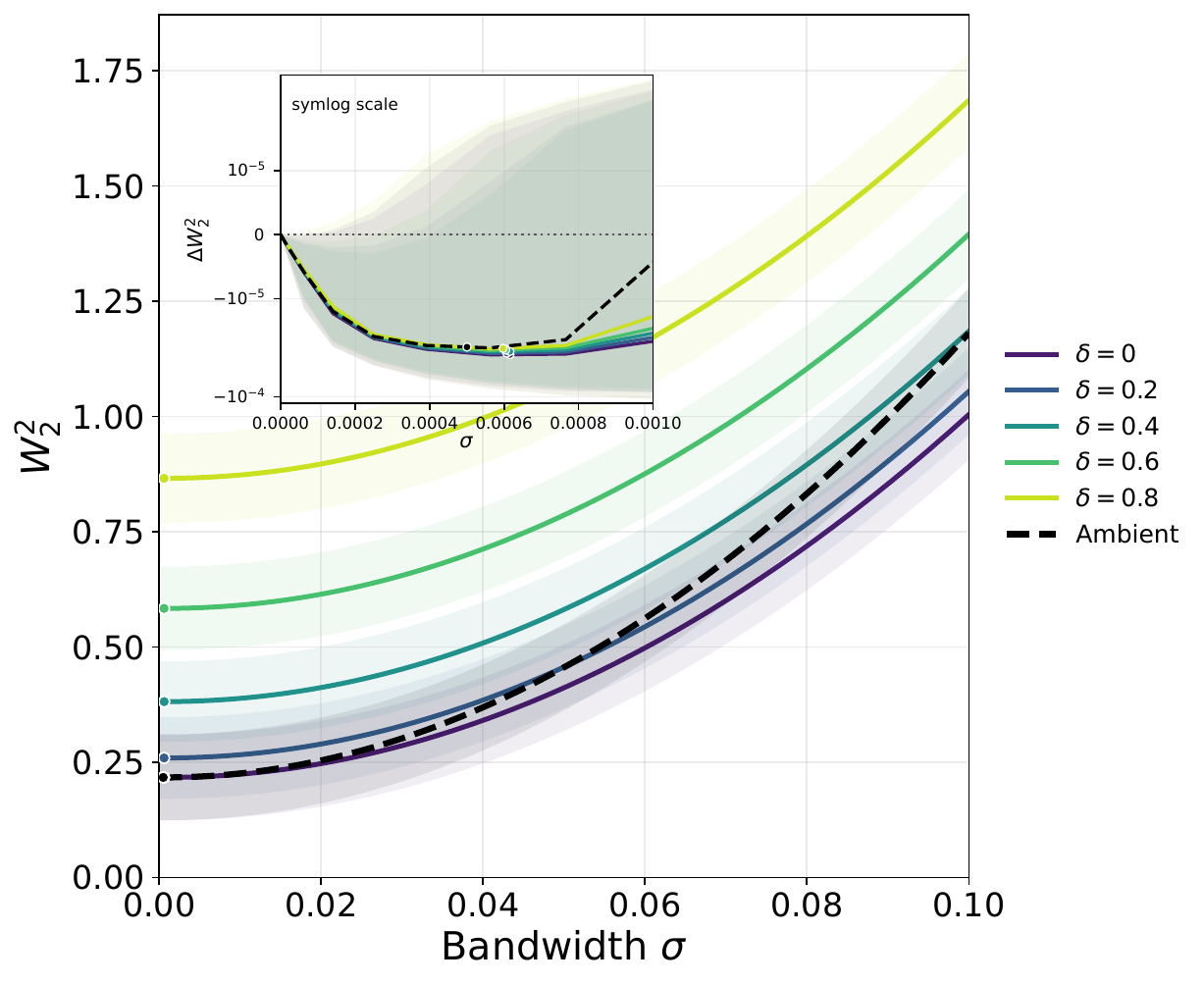}
\end{minipage}
\caption{Sensitivity of the complete Wasserstein curves to reconstruction
error.  Left: $\mathbb S^2$.  Right:
$\mathbb S^1\times\mathbb S^2$.}
\label{fig:app_latent_smoothing_delta_curves}
\end{figure}

\begin{figure}[t]
\centering
\begin{minipage}{0.49\textwidth}
    \centering
    \includegraphics[width=\linewidth]
    {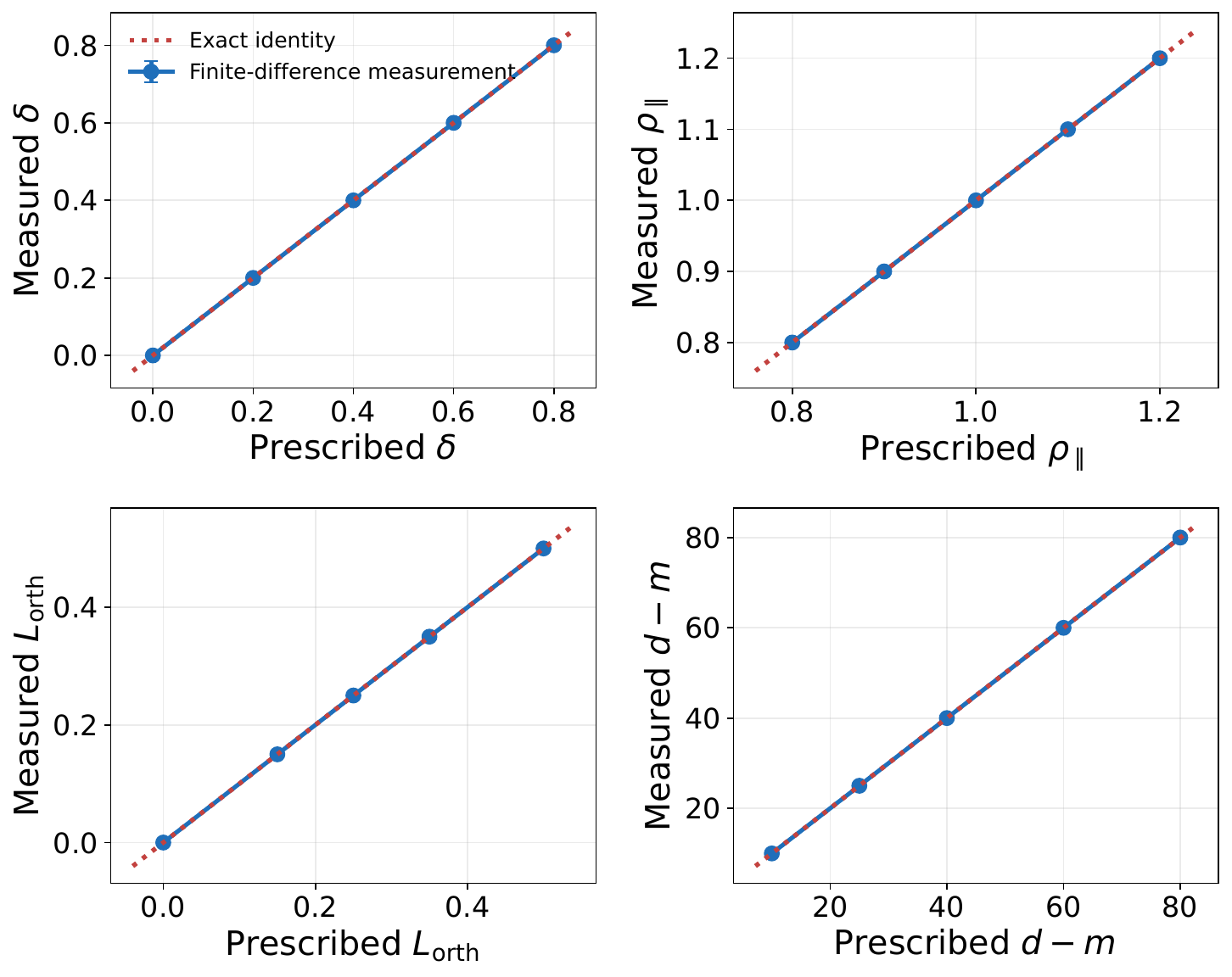}
\end{minipage}
\begin{minipage}{0.49\textwidth}
    \centering
    \includegraphics[width=\linewidth]
    {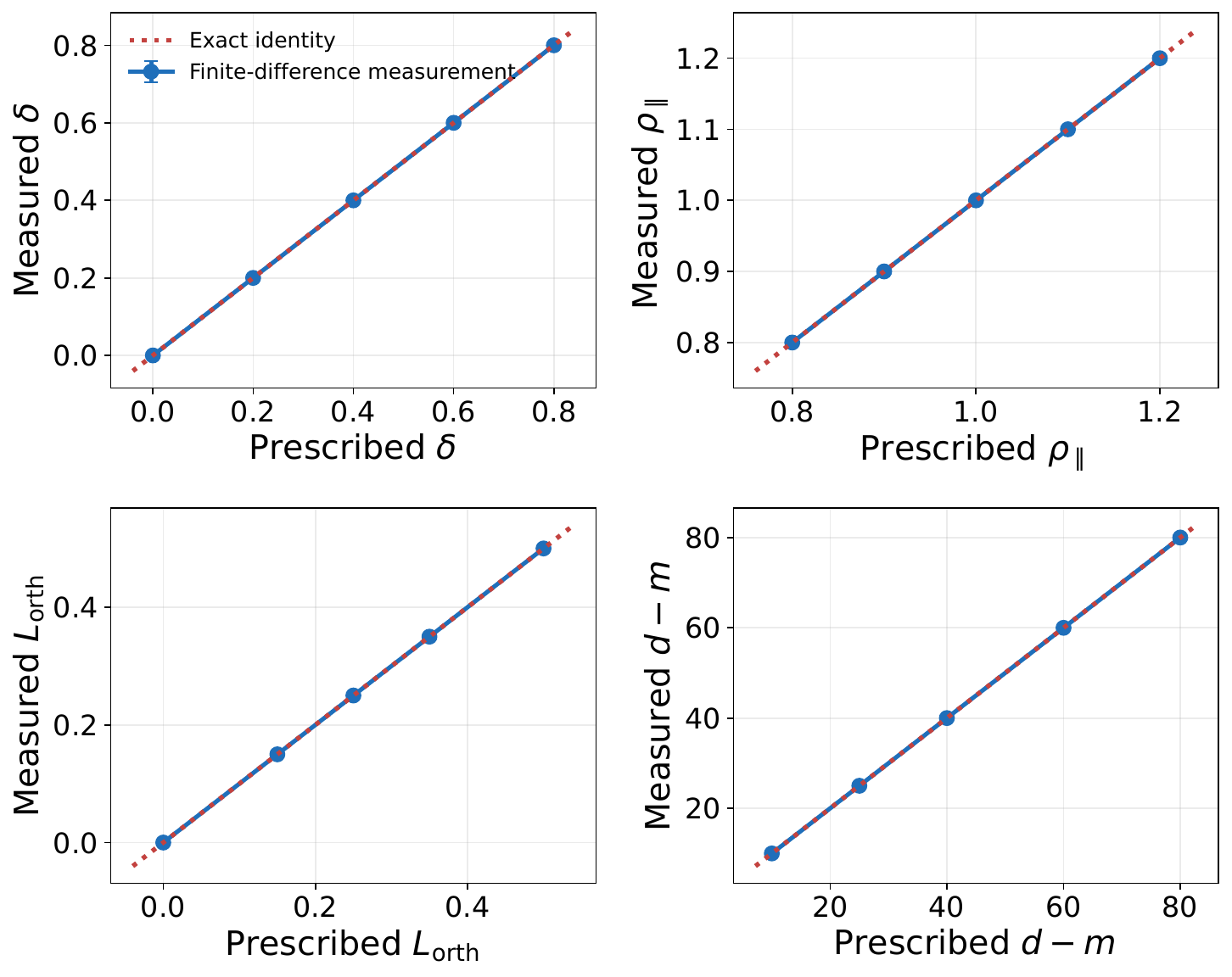}
\end{minipage}
\caption{Numerical verification of the prescribed parameters in the explicit
encoder--decoder construction.  The left group shows the unit sphere
$\mathbb S^2\subset\mathbb R^{100}$ with intrinsic dimension $m=2$, and the
right group shows the product manifold
$\mathbb S^1\times\mathbb S^2\subset\mathbb R^{100}$ with intrinsic dimension
$m=3$.  Within each group, the four panels compare the prescribed and measured
values of the reconstruction error $\delta$ (top left), tangential gain
$\rho_{\parallel}$ (top right), orthogonal response $L_{\mathrm{orth}}$
(bottom left), and latent-complement dimension $d-m$ (bottom right).
Jacobian-based responses are evaluated by central finite differences at the
empirical anchors.  In every panel, the horizontal axis gives the prescribed
value and the vertical axis gives its numerical measurement.  The red dotted
line is the identity $y=x$, representing exact agreement between the
prescribed and measured quantities.}
\label{fig:app_latent_parameter_verification}
\end{figure}

\section{MNIST Implementation Details and Diagnostic Metrics}
\label{app:mnist_details}

This section provides the architecture, training objectives, optimization
settings, local-frame construction, and diagnostic formulas used in the MNIST
study of \Cref{subsec:mnist_benchmark}. The synthetic experiments are used to
evaluate the geometric quantities in settings with known tangent spaces. For
MNIST, the local-frame calculations below are exploratory diagnostics of the
trained network relative to data-estimated PCA directions.

\subsection{Network Architecture and Training Protocol}

The Geometry-Preserving Encoder--Decoder (GPE) of
\citet{lee2025geometry} is trained on the balanced empirical measure
\[
    \hat\mu_n
    =
    \frac1{20}\sum_{i=1}^{20}\delta_{x_i},
\]
which contains two images from each MNIST digit class. The sample is selected
using random seed $42$. The encoder and decoder are trained sequentially for
$2{,}000$ epochs each, using full batches and the Adam optimizer with learning
rate $10^{-3}$.

\paragraph{Encoder training.}
The encoder $f_\theta:\mathbb R^{784}\to\mathbb R^{15}$ is a fully connected
network with layer dimensions
\[
    784\longrightarrow512\longrightarrow256\longrightarrow128
    \longrightarrow15.
\]
Softplus activations are applied after the three hidden layers. The encoder
parameters are trained by minimizing
\begin{equation}
\label{eq:mnist_encoder_training_loss}
    \mathcal L_{\mathrm{enc}}(\theta)
    =
    \mathbb E_{x_i,x_j\sim\hat\mu_n}
    \left[
        \left(
            \log
            \frac{
                1+\|f_\theta(x_i)-f_\theta(x_j)\|_2^2
            }{
                1+\|x_i-x_j\|_2^2
            }
        \right)^2
    \right].
\end{equation}
This objective penalizes discrepancies between pairwise distances of the
training anchors and their latent representations. Because it is evaluated on
a finite set of anchors, it does not by itself determine the Jacobians
$J_f(x_i)$ or verify local isometry on an unknown MNIST tangent space.

\paragraph{Decoder training.}
After freezing the encoder, the decoder
$g_\phi:\mathbb R^{15}\to[0,1]^{784}$ is trained. Its layer dimensions are
\[
    15\longrightarrow128\longrightarrow256\longrightarrow512
    \longrightarrow784.
\]
The three hidden layers use Softplus activations, and the output layer uses a
sigmoid activation. At every training iteration, independent noise
\[
    \xi\sim\mathcal N(0,\sigma_{\mathrm{train}}^2I_{15}),
    \qquad
    \sigma_{\mathrm{train}}=0.05,
\]
is added to the latent representation. The decoder objective is
\begin{equation}
\label{eq:mnist_decoder_training_loss_app}
    \mathcal L_{\mathrm{dec}}(\phi)
    =
    \mathbb E_{x\sim\hat\mu_n,\,\xi}
    \left[
        \|g_\phi(f_\theta(x)+\xi)-x\|_2^2
    \right].
\end{equation}
The implementation averages the squared error over pixels, which differs from
\eqref{eq:mnist_decoder_training_loss_app} only by the fixed factor $784$ and
therefore has the same minimizers.

\subsection{Reconstruction and Local-Frame Diagnostics}

MNIST does not provide a known intrinsic dimension or ground-truth tangent
spaces. We therefore do not report an empirical encoder-isometry defect. The
quantities defined below measure the network response relative to specified
local PCA frames and are not used as estimates of the intrinsic constants in
\Cref{thm:latent_decomposition}.

\paragraph{Local PCA frames.}
For each training anchor $x_i$, we construct an orthonormal matrix
\[
    E_i\in\mathbb R^{784\times m}
\]
by anchor-centered, distance-weighted PCA using same-class neighbors from the
full MNIST training collection. The offset $x_j-x_i$ receives the weight
\[
    w_{ij}
    =
    \exp\!\left[
        -\frac12
        \left(
            \frac{\|x_j-x_i\|_2}{h_i}
        \right)^2
    \right].
\]
The primary configuration uses $m=10$, $k=30$ neighbors, and
\[
    h_i
    =
    0.75\,
    \operatorname{median}_{j\in\mathcal N_i}
    \|x_j-x_i\|_2.
\]
As a sensitivity check, we also use $k=50$ neighbors with
\[
    h_i
    =
    \operatorname{median}_{j\in\mathcal N_i}
    \|x_j-x_i\|_2.
\]
The larger MNIST collection is used only after training, for this local-frame
construction and for the finite reference measure in the transport
comparison.

\paragraph{Reconstruction displacement.}
The empirical RMS Euclidean reconstruction displacement is
\begin{equation}
\label{eq:mnist_delta_diagnostic}
    \widehat\delta
    :=
    \left(
        \frac1n\sum_{i=1}^n
        \|x_i-g_\phi(f_\theta(x_i))\|_2^2
    \right)^{1/2}.
\end{equation}
This Euclidean displacement should be distinguished from the pixel-averaged
mean squared error used during decoder training.

\paragraph{Encoded local frame.}
For each estimated frame, define 
$
    B_i
    :=
    J_f(x_i)E_i
    \in\mathbb R^{15\times m}.
$
We record the numerical rank and extreme singular values of $B_i$ to determine
whether the selected $m$-dimensional frame remains $m$-dimensional after
encoding. These values are numerical checks on the selected PCA frames, not
estimates of an intrinsic encoder-isometry constant.

\paragraph{Shared PCA-frame tangential gain and residual.}
The composed Jacobian acting on the estimated local frame is
\[
    A_i
    :=
    J_g(f_\theta(x_i))B_i
    =
    J_g(f_\theta(x_i))J_f(x_i)E_i
    \in\mathbb R^{784\times m}.
\]
Relative to these frames, we define the shared tangential gain and its
operator-norm residual by
\begin{align}
    \widehat\rho_\parallel
    &\in
    \arg\min_{\rho\geq0}
    \max_{1\leq i\leq n}
    \min_{O_i\in O(m)}
    \|A_i-\rho E_iO_i\|_{\mathrm{op}},
    \label{eq:mnist_global_gain_diagnostic}\\
    \widehat\varepsilon_{\mathrm{iso}}
    &:=
    \max_{1\leq i\leq n}
    \min_{O_i\in O(m)}
    \|A_i-\widehat\rho_\parallel E_iO_i\|_{\mathrm{op}}.
    \label{eq:mnist_global_isometry_diagnostic}
\end{align}
Both quantities remain conditional on the chosen PCA frames.

For the numerical optimization, write
\[
    E_i^\top A_i
    =
    U_i\Sigma_iW_i^\top
\]
and initialize the orthogonal factor by $O_{i,0}=U_iW_i^\top$. This is the
orthogonal Procrustes solution for the Frobenius norm and is used only as an
initialization for the operator-norm objective. At fixed $\rho$, we refine
$O_i$ through the parameterization
\[
    O_i
    =
    O_{i,0}\exp(K_i-K_i^\top).
\]
Both connected components of $O(m)$ are considered. The refinement uses
$300$ Adam iterations with learning rate $3\times10^{-2}$, together with one
random orthogonal restart. We alternate this refinement with a bounded
one-dimensional minimization over the shared variable $\rho$ for at most four
iterations. Final operator norms are evaluated by singular value
decomposition rather than by the smooth objective used during refinement.

As an independent numerical check, we approximate
$J_{g\circ f}(x_i)E_i$ by centered finite differences with step size
$10^{-2}$ and compare the result with the automatic-differentiation
Jacobian.

\paragraph{Latent-complement decoder response.}
Let
    $r_i:=\operatorname{rank}(B_i),$
let $Q_i\in\mathbb R^{15\times r_i}$ have orthonormal columns spanning
$\operatorname{range}(B_i)$, and let
$Q_i^\perp\in\mathbb R^{15\times(15-r_i)}$ span its orthogonal complement.
We define the PCA-frame latent-complement response by
\begin{equation}
\label{eq:mnist_orthogonal_response_diagnostic}
    \widehat L_{\mathrm{orth}}
    :=
    \max_{1\leq i\leq n}
    \left\|
        J_g(f_\theta(x_i))Q_i^\perp
    \right\|_{\mathrm{op}}.
\end{equation}
The complement is constructed from
$\operatorname{range}(J_f(x_i)E_i)$ rather than from the range of the full
encoder Jacobian. If $r_i=15$, the complement is empty and its contribution is
zero by convention.

\begin{table}[h]
\centering
\caption{Empirical diagnostics for the GPE trained on $\hat\mu_n$, with
$n=20$, $d=15$, and $D=784$. The local PCA calculations use the estimated
dimension $m=10$ and the primary $k=30$ neighborhood configuration. The
reconstruction displacement is computed directly at the training anchors.
The remaining quantities are conditional on the data-estimated PCA frames
and are not interpreted as intrinsic geometric constants in
\Cref{thm:latent_decomposition}.}
\label{tab:network_properties_app}
\begin{tabular}{lcc}
\hline
Metric & Symbol & Empirical value \\ \hline
RMS reconstruction displacement
& $\widehat\delta$
& $1.87\times10^{-3}$ \\
PCA-frame tangential gain
& $\widehat\rho_\parallel$
& $1.98\times10^{-3}$ \\
PCA-frame tangential residual
& $\widehat\varepsilon_{\mathrm{iso}}$
& $4.34\times10^{-3}$ \\
Latent-complement decoder response
& $\widehat L_{\mathrm{orth}}$
& $3.83\times10^{-3}$ \\ \hline
\end{tabular}
\end{table}

\bibliography{ref}

@article{niyogi2008finding,
  title     = {Finding the homology of submanifolds with high confidence from random samples},
  author    = {Niyogi, Partha and Smale, Stephen and Weinberger, Shmuel},
  journal   = {Discrete \& Computational Geometry},
  volume    = {39},
  number    = {1},
  pages     = {419--441},
  year      = {2008},
  publisher = {Springer}
}

@incollection{figalli2009regularity,
  author    = {Figalli, Alessio},
  title     = {Regularity of optimal transport maps [after {Ma-Trudinger-Wang} and {Loeper}]},
  booktitle = {S\'eminaire Bourbaki : volume 2008/2009 expos\'es 997-1011},
  series    = {Ast\'erisque},
  note      = {talk:1009},
  pages     = {341--368},
  year      = {2010},
  publisher = {Soci\'et\'e math\'ematique de France},
  number    = {332},
  mrnumber  = {2648684},
  zbl       = {1211.49054},
  language  = {en},
  url       = {https://www.numdam.org/item/AST_2010__332__341_0/}
}

@article{de2015partial,
  title   = {Partial regularity for optimal transport maps},
  author  = {De Philippis, Guido and Figalli, Alessio},
  journal = {Publications math{\'e}matiques de l'IH{\'E}S},
  volume  = {121},
  pages   = {81--112},
  year    = {2015}
}

@book{lee2018introduction,
  title     = {Introduction to Riemannian manifolds},
  author    = {Lee, John M},
  volume    = {2},
  year      = {2018},
  publisher = {Springer}
}

@article{cordero2001riemannian,
  title     = {A Riemannian interpolation inequality {\`a} la Borell, Brascamp and Lieb},
  author    = {Cordero-Erausquin, Dario and McCann, Robert J and Schmuckenschl{\"a}ger, Michael},
  journal   = {Inventiones mathematicae},
  volume    = {146},
  number    = {2},
  pages     = {219--257},
  year      = {2001},
  publisher = {Springer}
}

@article{fournier2015rate,
  title     = {On the rate of convergence in Wasserstein distance of the empirical measure},
  author    = {Fournier, Nicolas and Guillin, Arnaud},
  journal   = {Probability theory and related fields},
  volume    = {162},
  number    = {3},
  pages     = {707--738},
  year      = {2015},
  publisher = {Springer}
}

@article{federer1959curvature,
  title     = {Curvature measures},
  author    = {Federer, Herbert},
  journal   = {Transactions of the American mathematical Society},
  volume    = {93},
  number    = {3},
  pages     = {418--491},
  year      = {1959},
  publisher = {JSTOR}
}

@inproceedings{boissard2014mean,
  title     = {On the mean speed of convergence of empirical and occupation measures in Wasserstein distance},
  author    = {Boissard, Emmanuel and Le Gouic, Thibaut},
  booktitle = {Annales de l'IHP Probabilit{\'e}s et statistiques},
  volume    = {50},
  number    = {2},
  pages     = {539--563},
  year      = {2014}
}

@article{weed2019sharp,
  title     = {Sharp asymptotic and finite-sample rates of convergence of empirical measures in Wasserstein distance},
  author    = {Weed, Jonathan and Bach, Francis},
  journal   = {Bernoulli},
  volume    = {25},
  number    = {4A},
  pages     = {2620--2648},
  year      = {2019},
  publisher = {JSTOR}
}

@article{parzen1962estimation,
  title     = {On estimation of a probability density function and mode},
  author    = {Parzen, Emanuel},
  journal   = {The annals of mathematical statistics},
  volume    = {33},
  number    = {3},
  pages     = {1065--1076},
  year      = {1962},
  publisher = {JSTOR}
}

@article{ma2005regularity,
  title     = {Regularity of potential functions of the optimal transportation problem},
  author    = {Ma, Xi-Nan and Trudinger, Neil S and Wang, Xu-Jia},
  journal   = {Archive for rational mechanics and analysis},
  volume    = {177},
  number    = {2},
  pages     = {151--183},
  year      = {2005},
  publisher = {Springer}
}

@article{dudley1969speed,
  title     = {The speed of mean Glivenko-Cantelli convergence},
  author    = {Dudley, Richard Mansfield},
  journal   = {The Annals of Mathematical Statistics},
  volume    = {40},
  number    = {1},
  pages     = {40--50},
  year      = {1969},
  publisher = {JSTOR}
}

@article{rosenblatt1956central,
  title   = {A central limit theorem and a strong mixing condition},
  author  = {Rosenblatt, Murray},
  journal = {Proceedings of the national Academy of Sciences},
  volume  = {42},
  number  = {1},
  pages   = {43--47},
  year    = {1956}
}

@article{goldfeld2020convergence,
  title     = {Convergence of smoothed empirical measures with applications to entropy estimation},
  author    = {Goldfeld, Ziv and Greenewald, Kristjan and Niles-Weed, Jonathan and Polyanskiy, Yury},
  journal   = {IEEE Transactions on Information Theory},
  volume    = {66},
  number    = {7},
  pages     = {4368--4391},
  year      = {2020},
  publisher = {IEEE}
}

@article{goldfeld2024statistical,
  title     = {Statistical inference with regularized optimal transport},
  author    = {Goldfeld, Ziv and Kato, Kengo and Rioux, Gabriel and Sadhu, Ritwik},
  journal   = {Information and Inference: A Journal of the IMA},
  volume    = {13},
  number    = {1},
  pages     = {iaad056},
  year      = {2024},
  publisher = {Oxford University Press}
}

@inproceedings{nietert2021smooth,
  title        = {Smooth $ p $-Wasserstein distance: structure, empirical approximation, and statistical applications},
  author       = {Nietert, Sloan and Goldfeld, Ziv and Kato, Kengo},
  booktitle    = {International conference on machine learning},
  pages        = {8172--8183},
  year         = {2021},
  organization = {PMLR}
}

@article{berard1994embedding,
  title     = {Embedding Riemannian manifolds by their heat kernel},
  author    = {B{\'e}rard, Pierre and Besson, G{\'e}rard and Gallot, Sylvain},
  journal   = {Geometric \& Functional Analysis GAFA},
  volume    = {4},
  number    = {4},
  pages     = {373--398},
  year      = {1994},
  publisher = {Springer}
}

@article{coifman2006diffusion,
  title     = {Diffusion maps},
  author    = {Coifman, Ronald R and Lafon, St{\'e}phane},
  journal   = {Applied and computational harmonic analysis},
  volume    = {21},
  number    = {1},
  pages     = {5--30},
  year      = {2006},
  publisher = {Elsevier}
}

@article{tenenbaum2000global,
  title     = {A global geometric framework for nonlinear dimensionality reduction},
  author    = {Tenenbaum, Joshua B and Silva, Vin de and Langford, John C},
  journal   = {science},
  volume    = {290},
  number    = {5500},
  pages     = {2319--2323},
  year      = {2000},
  publisher = {American Association for the Advancement of Science}
}

@article{belkin2003laplacian,
  title     = {Laplacian eigenmaps for dimensionality reduction and data representation},
  author    = {Belkin, Mikhail and Niyogi, Partha},
  journal   = {Neural computation},
  volume    = {15},
  number    = {6},
  pages     = {1373--1396},
  year      = {2003},
  publisher = {MIT Press}
}

@article{miyato2018spectral,
  title   = {Spectral normalization for generative adversarial networks},
  author  = {Miyato, Takeru and Kataoka, Toshiki and Koyama, Masanori and Yoshida, Yuichi},
  journal = {arXiv preprint arXiv:1802.05957},
  year    = {2018}
}

@inproceedings{vincent2008extracting,
  title     = {Extracting and composing robust features with denoising autoencoders},
  author    = {Vincent, Pascal and Larochelle, Hugo and Bengio, Yoshua and Manzagol, Pierre-Antoine},
  booktitle = {Proceedings of the 25th international conference on Machine learning},
  pages     = {1096--1103},
  year      = {2008}
}

@article{alain2014regularized,
  title     = {What regularized auto-encoders learn from the data-generating distribution},
  author    = {Alain, Guillaume and Bengio, Yoshua},
  journal   = {The Journal of Machine Learning Research},
  volume    = {15},
  number    = {1},
  pages     = {3563--3593},
  year      = {2014},
  publisher = {JMLR. org}
}

@inproceedings{rombach2022high,
  title        = {High-resolution image synthesis with latent diffusion models},
  author       = {Rombach, Robin and Blattmann, Andreas and Lorenz, Dominik and Esser, Patrick and Ommer, Bj{\"o}rn},
  booktitle    = {2022 IEEE/CVF conference on computer vision and pattern recognition (CVPR)},
  pages        = {10674--10685},
  year         = {2022},
  organization = {ieee}
}

@article{lee2025geometry,
  title   = {Geometry-preserving encoder/decoder in latent generative models},
  author  = {Lee, Wonjun and O'Neill, Riley CW and Zou, Dongmian and Calder, Jeff and Lerman, Gilad},
  journal = {arXiv preprint arXiv:2501.09876},
  year    = {2025}
}

@book{boucheron2013concentration,
  author    = {Boucheron, St{\'e}phane and Lugosi, G{\'a}bor and Massart, Pascal},
  title     = {Concentration Inequalities: A Nonasymptotic Theory of Independence},
  publisher = {Oxford University Press},
  address   = {Oxford},
  year      = {2013}
}

@article{song2020score,
  title   = {Score-based generative modeling through stochastic differential equations},
  author  = {Song, Yang and Sohl-Dickstein, Jascha and Kingma, Diederik P and Kumar, Abhishek and Ermon, Stefano and Poole, Ben},
  journal = {arXiv preprint arXiv:2011.13456},
  year    = {2020}
}

@article{ho2020denoising,
  title   = {Denoising diffusion probabilistic models},
  author  = {Ho, Jonathan and Jain, Ajay and Abbeel, Pieter},
  journal = {Advances in neural information processing systems},
  volume  = {33},
  pages   = {6840--6851},
  year    = {2020}
}

@article{hyvarinen2005estimation,
  title   = {Estimation of non-normalized statistical models by score matching.},
  author  = {Hyv{\"a}rinen, Aapo and Dayan, Peter},
  journal = {Journal of Machine Learning Research},
  volume  = {6},
  number  = {4},
  pages   = {695},
  year    = {2005}
}

@article{vincent2011connection,
  title     = {A connection between score matching and denoising autoencoders},
  author    = {Vincent, Pascal},
  journal   = {Neural computation},
  volume    = {23},
  number    = {7},
  pages     = {1661--1674},
  year      = {2011},
  publisher = {MIT Press}
}

@article{pidstrigach2022score,
  title   = {Score-based generative models detect manifolds},
  author  = {Pidstrigach, Jakiw},
  journal = {Advances in Neural Information Processing Systems},
  volume  = {35},
  pages   = {35852--35865},
  year    = {2022}
}

@article{de2022convergence,
  title   = {Convergence of denoising diffusion models under the manifold hypothesis},
  author  = {De Bortoli, Valentin},
  journal = {arXiv preprint arXiv:2208.05314},
  year    = {2022}
}

@inproceedings{oko2023diffusion,
  title        = {Diffusion models are minimax optimal distribution estimators},
  author       = {Oko, Kazusato and Akiyama, Shunta and Suzuki, Taiji},
  booktitle    = {International Conference on Machine Learning},
  pages        = {26517--26582},
  year         = {2023},
  organization = {PMLR}
}

@book{jost2005riemannian,
  title     = {Riemannian geometry and geometric analysis},
  author    = {Jost, J{\"u}rgen},
  year      = {2005},
  publisher = {Springer}
}

@article{loeper2011regularity,
  title     = {Regularity of optimal maps on the sphere: The quadratic cost and the reflector antenna},
  author    = {Loeper, Gr{\'e}goire},
  journal   = {Archive for rational mechanics and analysis},
  volume    = {199},
  number    = {1},
  pages     = {269--289},
  year      = {2011},
  publisher = {Springer}
}

@inproceedings{feydy2019interpolating,
  title        = {Interpolating between optimal transport and mmd using sinkhorn divergences},
  author       = {Feydy, Jean and S{\'e}journ{\'e}, Thibault and Vialard, Fran{\c{c}}ois-Xavier and Amari, Shun-ichi and Trouv{\'e}, Alain and Peyr{\'e}, Gabriel},
  booktitle    = {The 22nd international conference on artificial intelligence and statistics},
  pages        = {2681--2690},
  year         = {2019},
  organization = {PMLR}
}

@article{loeper2009regularity,
  title  = {On the regularity of solutions of optimal transportation problems},
  author = {Loeper, Gr{\'e}goire},
  year   = {2009}
}

@article{niles2022minimax,
  title     = {Minimax estimation of smooth densities in Wasserstein distance},
  author    = {Niles-Weed, Jonathan and Berthet, Quentin},
  journal   = {The Annals of Statistics},
  volume    = {50},
  number    = {3},
  pages     = {1519--1540},
  year      = {2022},
  publisher = {JSTOR}
}

@article{divol2022measure,
  title     = {Measure estimation on manifolds: an optimal transport approach},
  author    = {Divol, Vincent},
  journal   = {Probability Theory and Related Fields},
  volume    = {183},
  number    = {1},
  pages     = {581--647},
  year      = {2022},
  publisher = {Springer}
}
\end{document}